\newcommand{\Hom}{\operatorname{Hom}}
\newcommand{\Spec}{\operatorname{Spec}}
\newcommand{\HS}{\operatorname{HS}}
\newcommand{\FL}[1]{\operatorname{FL}(#1)}
\title{Finiteness of leaps of modules of integrable derivations of algebras of finite type}
\author{Takuya Miyamoto}
\address{Mathematical Sciences, the University of Tokyo, Meguro Komaba 3-8-1, Tokyo, Japan}
\email{miyamoto-takuya@g.ecc.u-tokyo.ac.jp}
\begin{document}
\subjclass[2020]{13N15, 13A35, 14F10}

\maketitle

\begin{abstract}
We prove the finiteness of leaps of modules of $m$-integrable derivations for algebras essentially of finite type and, more generally, for schemes essentially of finite type over an algebraically closed field of positive characteristic. This provides an affirmative answer to a question posed by L. {Narváez Macarro}. As an application, we establish the coherence of the module of $\infty$-integrable derivations.
\end{abstract}

\tableofcontents

\newtheorem{theorem}{Theorem}[section]
\newtheorem{proposition}[theorem]{Proposition}
\newtheorem{corollary}[theorem]{Corollary}
\newtheorem{lemma}[theorem]{Lemma}
\theoremstyle{definition}
\newtheorem{definition}[theorem]{Definition}

\newtheorem{assumption}[theorem]{Assumption}
\newtheorem{setting}[theorem]{Setting}
\newtheorem{deflemma}[section]{Definition-Lemma}
\theoremstyle{remark}
\newtheorem{remark}{Remark}[theorem]
\newtheorem{example}{Example}[theorem]
\theoremstyle{definition}

\section{Introduction}
Hasse-Schmidt derivations (abbreviated as HS-derivations) were introduced in \cite{SchmidtHasse+1937+215+237}, which are also called higher derivations \cite[\S 27]{Mat}. Given a positive integer $m$, a field $k$ and a $k$-algebra $R$, $\HS _k^m(R)$ denotes the group of HS-derivations of length $m$, which consists of $A_m:=k[\![t]\!]/(t^{m+1})$-algebra automorphisms
$$R[\![t]\!]/(t^{m+1}) \to R[\![t]\!]/(t^{m+1})$$
that become the identity when restricted to $ R[\![t]\!]/(t^{m+1}) \otimes_{A_m} k=R$. For $m=\infty$, we regard $t^\infty=0$, and we also have the notion of HS-derivations of infinite length. There is a canonical isomorphism between $\HS _k^1(R)$ and the module $\operatorname{Der}_k(R,R)$. Under this identification, the image of
$$\HS _k^m(R)\to \HS _k^1(R)\simeq \operatorname{Der}_k(R,R)$$
is called the module of $m$-integrable derivations and denoted by $\operatorname{Der}_k^m(R)$, which has been studied in \cite{matsumura1982integrable}, \cite{NARVAEZMACARRO20122712}, \cite{NARVAEZMACARRO2024109758}. The module $\operatorname{Der}_k^\infty(R)$ is also denoted by $\operatorname{Ider}_k(R)$. Note that if $k$ contains $\mathbb{Q}$ or if $R$ is formally smooth over $k$, we have $\operatorname{Der}_k^m(R)=\operatorname{Der}_k(R)$ for any $m\le \infty$ (see \cite{Mat}). Thus, the study of integrable derivations is expected to lead to a better understanding of singularities in positive characteristic. From now on, we assume that $k$ is an algebraically closed field of positive characteristic $p$ and $R$ is essentially of finite type. We have the following descending chain
$$ \operatorname{Der}_k(R)\supset \operatorname{Der}_k^2(R)\supset \dots \supset \operatorname{Der}_k^m(R) \supset\dots.$$
The question of when the inclusion $\operatorname{Der}_k^{m-1}(R)\supset \operatorname{Der}_k^m(R)$ is proper has been studied since their introduction. When this is the case, we say that $R$ {\em leaps} at $m$. It has been known that leaps occur only at $m=p^i$ (\cite[Theorem 4.1]{Her}). The number of leaps is finite if $R$ is a unibranch curve (\cite[Theorem 4.7]{NARVAEZMACARRO2024109441}), and, more generally, if $R$ is a reduced curve (\cite[Theorem 6.1, Corollary 6.3]{bravo2024finitenessleapssensehasseschmidt}).
The aim of this paper is to show, using results from \cite[\S 2]{miy2025}, that the finiteness of leaps holds for every algebra essentially of finite type over an algebraically closed field of positive characteristic. Precisely, we show:

\begin{theorem}[{= Theorem \ref{infty integrable thm}}]\label{main thm1}
    There exists $M>0$ (depending on $R$) such that
$$ \operatorname{Der}_k^M(R)= \operatorname{Der}_k^{M+1}(R)=\dots= \operatorname{Der}_k^\infty(R)=\operatorname{Ider}_k(R).$$
\end{theorem}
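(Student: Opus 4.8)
The plan is to use the special nature of the modules $\operatorname{Der}_k^m(R)$. A descending chain of finitely generated submodules of the (finitely generated) module $\operatorname{Der}_k(R)$ need not stabilize under Noetherianity alone, so the finiteness cannot come from the descending chain condition; the extra structure I would exploit is that each $\operatorname{Der}_k^m(R)$ is the \emph{image} of a truncation map between schemes of Hasse--Schmidt derivations, and images of this kind are controlled by an ascending chain of ideals. First I would invoke the representability results of \cite[\S 2]{miy2025}: for $R$ essentially of finite type there are affine $R$-schemes of finite type $\mathcal{H}_m$ with $\HS_k^m(R)=\mathcal{H}_m(R)$, truncation morphisms $\mathcal{H}_{m+1}\to\mathcal{H}_m$, and $\mathcal{H}_1=\mathbb{V}(\Omega_{R/k})$, so that $\operatorname{Der}_k^m(R)$ is the image on $R$-points of the composite $q_m\colon\mathcal{H}_m\to\mathcal{H}_1$. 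Since $R$ is Noetherian, $\mathcal{H}_1$ is a Noetherian scheme.

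The central step is to pass to scheme-theoretic images. Let $\mathcal{I}_m\subseteq\mathcal{H}_1$ be the scheme-theoretic image of $q_m$. Because $q_{m+1}$ factors as $\mathcal{H}_{m+1}\to\mathcal{H}_m\xrightarrow{q_m}\mathcal{H}_1$, the closed subscheme $\mathcal{I}_{m+1}$ is contained in $\mathcal{I}_m$, so $\mathcal{I}_1\supseteq\mathcal{I}_2\supseteq\cdots$ is a descending chain of closed subschemes of the Noetherian scheme $\mathcal{H}_1$; equivalently it is an ascending chain of quasi-coherent ideal sheaves, which must stabilize, giving $M>0$ with $\mathcal{I}_m=\mathcal{I}_M$ for all $m\ge M$. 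I want to stress that this step genuinely uses that $\operatorname{Der}_k^m(R)$ is an image: realizing the submodule $\operatorname{Der}_k^m(R)$ itself as a closed subscheme of $\mathbb{V}(\Omega_{R/k})$ would produce an \emph{ascending} chain of closed subschemes, for which no stabilization is available, mirroring the failure of the descending chain condition for modules.

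It then remains to transfer the stabilization of the $\mathcal{I}_m$ to the modules $\operatorname{Der}_k^m(R)$, that is, to the images on $R$-points, and I expect this to be the main obstacle: in general the $R$-points of $\mathcal{I}_m$ strictly contain $q_m(\mathcal{H}_m(R))$, and in characteristic $p$ the subgroup schemes $\mathcal{I}_m$ may be non-reduced (Frobenius-type) so that points of $\mathcal{I}_m$ need not lift. To close the gap I would combine the identification of $\operatorname{Der}_k^m(R)$ with the $R$-points of $\mathcal{I}_m$ provided by \cite[\S 2]{miy2025} with Herzog's theorem \cite[Theorem 4.1]{Her}, which confines leaps to $m=p^i$, and with the localization and constructibility statements of \cite[\S 2]{miy2025}. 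On the formally smooth locus of $\Spec R$ there are no leaps at all, so the primes at which $\operatorname{Der}_k^{p^i}(R)$ can still move beyond level $M$ lie in a proper closed subset; reducing to this leap locus (Artinian, and hence handled by the descending chain condition for finite-dimensional $k$-vector spaces, in the zero-dimensional base case) should then force $\operatorname{Der}_k^m(R)=\operatorname{Der}_k^M(R)$ for all $m\ge M$.

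Finally, once the chain of modules is constant from $M$ onward, its common value equals $\bigcap_m\operatorname{Der}_k^m(R)$. A derivation in this intersection lifts to $\HS_k^m(R)$ for every finite $m$; because the images $\mathcal{I}_m$, and after stabilization the relevant fibres, no longer vary for $m\ge M$, these finite-length lifts can be chosen compatibly and assembled into a lift to $\HS_k^\infty(R)$. This yields $\operatorname{Der}_k^M(R)=\operatorname{Der}_k^\infty(R)=\operatorname{Ider}_k(R)$, proving the theorem. Throughout, the delicate point is the compatibility of the $R$-point images with the scheme-theoretic images, which is precisely where the structural input of \cite[\S 2]{miy2025} is indispensable.
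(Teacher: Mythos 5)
There is a genuine gap, and it sits exactly where you flag the ``main obstacle.'' Your Noetherian argument stabilizes the descending chain of scheme-theoretic images $\mathcal{I}_M\supseteq\mathcal{I}_{M+1}\supseteq\cdots$, but the theorem is about the modules $\operatorname{Der}_k^m(R)=q_m(\mathcal{H}_m(R))$, i.e.\ images on $R$-points, and, as you yourself note, these can be strictly smaller than $\mathcal{I}_m(R)$; no identification of $\operatorname{Der}_k^m(R)$ with the $R$-points of $\mathcal{I}_m$ is available (and none is claimed in the cited reference). So the stabilization you obtain is of the wrong objects, and the proposed repair --- ``reducing to the leap locus\dots should then force $\operatorname{Der}_k^m(R)=\operatorname{Der}_k^M(R)$'' --- is not an argument: the $\operatorname{Der}_k^m(R)$ are $R$-modules, not modules over the coordinate ring of the leap locus, and there is no mechanism given for descending from the Artinian case back to $R$. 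This transfer is precisely the hard core of the paper. The paper replaces the chain of images by the ascending chain of obstruction modules $\operatorname{Ob}_R^{p}\subset\operatorname{Ob}_R^{p^2}\subset\cdots\subset T^1_{R/k}$ (dual to the chain of $\operatorname{Der}_k^{p^i}(R)$ by Theorem \ref{duality}) and explicitly warns that Noetherianity of $T^1_{R/k}$ does \emph{not} make this chain stationary, because $\operatorname{Ob}_R^{p^i}$ is only an $R^{p^i}$-module; the same Frobenius-twisting phenomenon is what blocks your $R$-point transfer. Overcoming it occupies Sections 3--5: Artin--Rees estimates (Lemmas \ref{hom lem}--\ref{j lem}), the $a$-boundedness machinery to build integrations of elements of $\mathfrak{n}^{e(q+1)+1}\cdot\operatorname{Der}_k^q(R)$ term by term (Theorem \ref{eq finie thm}), the finite-length counting argument (Theorem \ref{Her thm}), and a Noetherian induction on supports for the globalization (Theorem \ref{main thm global}). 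None of this is subsumed by your sketch.

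A secondary gap: your last step asserts that once the chain stabilizes, finite-length lifts of a derivation ``can be chosen compatibly'' to produce an element of $\HS_k^\infty(R)$. An inverse limit of nonempty sets of lifts need not be nonempty; one must show that in passing from length $m$ to length $m+1$ the lift can be corrected without disturbing low-order terms. The paper does this via the obstruction calculus (Lemma \ref{mn int lem}): stability of $\operatorname{Ob}_R^{p^\tau}=\operatorname{Ob}_R^{p^{\tau+1}}=\cdots$ lets one kill $\operatorname{ob}_{m+1}$ by composing with $E[l_{m+1}]$, which alters only terms of index $\ge l_{m+1}$, and $l_m\to\infty$ makes the limit well defined. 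Your proposal names the right target but supplies neither this correction mechanism nor a substitute for it.
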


This theorem provides, for the first time, a general finiteness result for leaps in arbitrary dimensions, thereby giving an affirmative answer to a question posed by L. {Narváez Macarro} [8, Question 3.6.5] and contributing to a better structural understanding of HS-derivations. Since leaps are stable under separable base changes by \cite[Theorem 3.27]{hernandez2020behavior}, this also implies the finiteness of leaps over any perfect field. We also give an affirmative answer to another question \cite[Question 3.6.1]{NARVAEZMACARRO20122712} by the following:

\begin{theorem}[{= Theorem \ref{mn int theorem}}]\label{main thm2}
    Let $n\ge 1$ be an integer and let $D\in \HS_k^n(R)$. Then $D$ is $\infty$-integrable if and only if $D$ is $m$-integrable for every integer $m\ge n$. Here, we say that $D$ is $m$-integrable if $D$ is in the image of $\HS_k^m(R)\to \HS_k^n(R)$.
\end{theorem}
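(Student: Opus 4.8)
The forward implication is immediate: if $D$ lifts to some $D_\infty\in\HS_k^\infty(R)$, then for every $m\ge n$ the truncation of $D_\infty$ to length $m$ lies in $\HS_k^m(R)$ and maps to $D$, so $D$ is $m$-integrable. The substance is the converse, which I would recast as nonemptiness of an inverse limit. Write $G_m=\HS_k^m(R)$ and let $\pi_{m',m}\colon G_{m'}\to G_m$ (for $m'\ge m$) denote the truncation homomorphisms, so that $\HS_k^\infty(R)=\varprojlim_m G_m$. Fixing $D$ and setting $S_m=\pi_{m,n}^{-1}(D)\subseteq G_m$ for $m\ge n$, one has that $D$ is $m$-integrable precisely when $S_m\neq\varnothing$, and that $D$ is $\infty$-integrable precisely when $\varprojlim_{m\ge n}S_m\neq\varnothing$. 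Thus the theorem reduces to the assertion that if every $S_m$ is nonempty, then their inverse limit is nonempty.

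Since an inverse limit of nonempty sets indexed by $\mathbb{N}$ may well be empty, the plan is to verify the Mittag--Leffler condition for the system $(S_m)_{m\ge n}$. Granting it, the stable images $S_m^{\mathrm{st}}=\bigcap_{m'\ge m}\operatorname{Im}(S_{m'}\to S_m)$ are nonempty and their transition maps are surjective, so a compatible family can be built one level at a time, producing a point of $\varprojlim S_m$. To verify Mittag--Leffler I must show that for each fixed $m\ge n$ the descending chain $\operatorname{Im}(S_{m'}\to S_m)$ ($m'\ge m$) stabilizes. Writing $H_{m',m}=\operatorname{Im}(G_{m'}\to G_m)$, which is a subgroup of $G_m$, compatibility of the truncations gives directly $\operatorname{Im}(S_{m'}\to S_m)=S_m\cap H_{m',m}$; hence it suffices to prove that for each fixed $m$ the descending chain of subgroups $H_{m',m}$ stabilizes as $m'\to\infty$.

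This last statement is a ``finiteness of leaps at level $m$''. Its case $m=1$ is exactly Theorem \ref{main thm1}, since $H_{m',1}=\operatorname{Der}_k^{m'}(R)$ and that chain stabilizes at $\operatorname{Ider}_k(R)$. For general $m$ I would argue by induction on $m$ using the truncation $G_m\to G_{m-1}$: if a length-$(m-1)$ HS-derivation admits a length-$m$ extension, the set of such extensions is a torsor under $\operatorname{Der}_k(R)$, realized by modifying the top coefficient by an arbitrary $k$-derivation. Projecting $H_{m',m}$ to $G_{m-1}$ lands in $H_{m',m-1}$, which stabilizes by the inductive hypothesis; over a fixed stable lower-order part, the top coefficients that extend to length $m'$ should form a descending chain of $R$-submodules (cosets) of the finitely generated $R$-module $\operatorname{Der}_k(R)$, which stabilizes by Noetherianity. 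Combining the two stabilizations yields that of $H_{m',m}$.

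The main obstacle is precisely the step just invoked: that, over a fixed lower-order part, the top coefficients realizing length-$m'$ liftability cut out a descending chain of $R$-submodules of $\operatorname{Der}_k(R)$. The higher integrability obstructions are a priori nonlinear, and it is the $R$-linear finiteness structure provided by \cite[\S 2]{miy2025}---the same input underlying Theorem \ref{main thm1}---that makes these liftability sets $R$-modules and hence subject to Noetherian stabilization. Once level-$m$ finiteness is secured for all $m\ge n$, the Mittag--Leffler mechanism of the first two paragraphs completes the proof.
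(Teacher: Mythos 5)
Your Mittag--Leffler framing is a legitimate repackaging of the paper's limit construction (the paper likewise builds an element of $\varprojlim_m \HS_k^m(R)$ level by level, freezing low-order terms), and your identity $\operatorname{Im}(S_{m'}\to S_m)=S_m\cap H_{m',m}$ is correct. The gap is in your justification of the one step that carries all the content: the stabilization, for fixed $m$, of the chain $H_{m',m}=\operatorname{Im}(\HS_k^{m'}(R)\to\HS_k^m(R))$ as $m'\to\infty$. Two things go wrong. First, a descending chain of submodules of a finitely generated module over a Noetherian ring does \emph{not} stabilize ``by Noetherianity'': Noetherianity is the ascending chain condition, and $(x)\supset(x^2)\supset\cdots$ in $k[x]$ already refutes the descending version. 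Even your base case $m=1$, the stabilization of $\operatorname{Der}_k^{m'}(R)$, is not a formal Noetherian fact --- it is the main theorem of the paper. Second, even after dualizing to the ascending chain of obstruction modules via Theorem \ref{duality}, the liftability sets you introduce are not $R$-submodules: the $\bullet$-action satisfies $\operatorname{ob}_{m'}(x\bullet D)=x^{m'}\cdot\operatorname{ob}_{m'}(D)$, so $\operatorname{Ob}^{p^i}_R$ is only an $R^{p^i}$-module with $i$ growing along the chain. The paper states explicitly, right after Theorem \ref{duality}, that for precisely this reason the Noetherian property of $T^1_{R/k}$ cannot be applied; Sections 3--5 (the $a$-boundedness machinery, the Artin--Rees constants, and the noetherian induction on supports) exist to overcome this obstacle. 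So the sentence in which you declare the liftability sets to be $R$-modules ``subject to Noetherian stabilization'' is exactly where the proof fails.

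What actually closes the gap is the following. Theorem \ref{eft alg thm} gives that $R$ satisfies $\FL{\tau}$, i.e.\ $\operatorname{Ob}^{p^\tau}_R=\operatorname{Ob}^{p^{\tau+1}}_R=\cdots$. Lemma \ref{mn int lem} then shows that \emph{every} $D\in\HS_k^m(R)$ extends to length $m+1$ after modifying only the terms of index $\ge l_{m+1}$: one kills $\operatorname{ob}_{m+1}(D)$ by composing with $E[l_{m+1}]$ for a suitable $E$ of length $r-1$, $r=(m+1)/l_{m+1}$, using $\operatorname{ob}_{rl_{m+1}}(E[l_{m+1}])=\operatorname{ob}_r(E)$ and $\operatorname{Ob}^{m+1}_R=\operatorname{Ob}^{p^i}_R$. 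Since $l_j\ge j/q\to\infty$, for each fixed $m_0$ one obtains $H_{m',m_0}=H_{N,m_0}$ for all $m'\ge N$, where $N$ is chosen so that $l_j>m_0$ for all $j>N$; this is exactly your Mittag--Leffler condition, and the paper then constructs the compatible family directly by iterating the lemma and passing to the limit. If you replace your inductive Noetherian argument by an appeal to Theorem \ref{eft alg thm} and Lemma \ref{mn int lem}, your proof becomes correct and coincides in substance with the paper's.
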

In fact, Theorem \ref{main thm1} is a special case of Theorem \ref{main thm2} where we put $n=1$. As an application of Theorem \ref{main thm1}, we obtain:

\begin{corollary}[{=Corollary \ref{localization ider cor}}]\label{main cor}
The module $\operatorname{Der}_k^\infty(R)=\operatorname{Ider}_k(R)$ is compatible with localization of $R$. That is, we have $$\operatorname{Ider}_k(S^{-1}R)=S^{-1}\operatorname{Ider}_k(R).$$
\end{corollary}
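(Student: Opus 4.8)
The plan is to reduce the statement to the finite-level case by means of Theorem \ref{main thm1}. The essential input I would isolate first is the compatibility of $m$-integrable derivations with localization \emph{at each finite level}: for every integer $m$,
\[
S^{-1}\operatorname{Der}_k^m(R) = \operatorname{Der}_k^m(S^{-1}R).
\]
Since $R$ is essentially of finite type, $\Omega_{R/k}$ is finitely presented, so $\operatorname{Der}_k(R) = \Hom_R(\Omega_{R/k}, R)$ already commutes with localization and $\operatorname{Der}_k(S^{-1}R) = S^{-1}\operatorname{Der}_k(R)$. Working inside this identification, the inclusion $S^{-1}\operatorname{Der}_k^m(R) \subseteq \operatorname{Der}_k^m(S^{-1}R)$ is the easy direction: an HS-derivation of $R$ of length $m$ extends uniquely to one of $S^{-1}R$ by the quotient rule, and its first component is the image of the original $m$-integrable derivation; since $\operatorname{Der}_k^m(S^{-1}R)$ is an $S^{-1}R$-module, it then contains the whole localization of the image.

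For the reverse inclusion I would argue by clearing denominators. Given $\delta' \in \operatorname{Der}_k^m(S^{-1}R)$ integrated by some $D' = (\mathrm{id}, \delta', D_2', \dots, D_m') \in \HS_k^m(S^{-1}R)$, only finitely many elements of $S$ appear as denominators among the finitely many (because $m < \infty$) coefficients of $D'$ evaluated on a finite generating set of $R$ over $k$. I would then show that, after multiplying by a suitable $s \in S$, one can produce an HS-derivation of length $m$ defined over $R$ whose first component is $s\delta'$, witnessing $s\delta' \in \operatorname{Der}_k^m(R)$ and hence $\delta' \in S^{-1}\operatorname{Der}_k^m(R)$. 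This clearing-of-denominators step, which depends crucially on the finiteness $m < \infty$ so that the multiplicativity constraints $D_n'(xy) = \sum_{i+j=n} D_i'(x)D_j'(y)$ involve only boundedly many terms, is where I expect the main technical difficulty to lie.

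With the finite-level statement established, the corollary follows formally. Applying Theorem \ref{main thm1} to both $R$ and $S^{-1}R$ yields finite integers $M_R$ and $M'$ with $\operatorname{Der}_k^m(R) = \operatorname{Ider}_k(R)$ for all $m \geq M_R$ and $\operatorname{Der}_k^m(S^{-1}R) = \operatorname{Ider}_k(S^{-1}R)$ for all $m \geq M'$. Setting $M = \max(M_R, M')$ and chaining the three identifications gives
\[
S^{-1}\operatorname{Ider}_k(R) = S^{-1}\operatorname{Der}_k^M(R) = \operatorname{Der}_k^M(S^{-1}R) = \operatorname{Ider}_k(S^{-1}R),
\]
where the outer equalities use the stabilization bounds $M \geq M_R$ and $M \geq M'$, and the middle equality is the finite-level localization compatibility at level $M$. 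This proves the claim.
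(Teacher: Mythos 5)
Your proposal is correct and follows essentially the same route as the paper: the finite-level compatibility $S^{-1}\operatorname{Der}_k^m(R)\simeq\operatorname{Der}_k^m(S^{-1}R)$ is exactly Theorem \ref{localization} (which the paper imports from Narv\'aez-Macarro rather than reproving), and the paper's one-line proof implicitly performs your final step of applying the stabilization of Theorem \ref{main thm1} to both $R$ and $S^{-1}R$ and chaining the identifications at a common level $M$. Your sketch of the clearing-of-denominators argument via the $s\bullet$ operation is consistent with the cited proof, so nothing essential is missing.
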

This is the first coherence result for $\operatorname{Ider}_k(R)$ to the best of our knowledge. For $m<\infty$, a similar statement has been proven in \cite[Corollary 2.3.5]{NARVAEZMACARRO20122712}. The case $m=\infty$ is more subtle because the map in \cite[Corollary 1.3.6]{NARVAEZMACARRO20122712} is not surjective for $m=\infty$ while $m$ is assumed to be an integer there (cf. \cite[Example 1.4]{MR1764573}). Note that Corollary \ref{main cor} implies that $\operatorname{Ider}_k(R)$ constitutes a coherent sheaf $\operatorname{Ider}_k(\mathcal{O}_X)$ on an algebraic scheme $X$ over $k$. In particular, it is expected to behave better than the usual tangent sheaf. In fact, some pathological phenomena of derivations peculiar to positive characteristic do not occur if we replace it by integrable derivations. For example, the local rank of $\operatorname{Ider}_k(R)$ is at most $\operatorname{dim}R$, while it is not true for $\operatorname{Der}_k(R)$ (see \cite{MR554764}). This pathology does not occur in characteristic zero (see \cite{MR366909} when $R$ is a domain and \cite{MR498523} in general). We can also state Theorem \ref{main thm1} in terms of schemes:
\begin{theorem}[{= Theorem \ref{infty integrable thm schemes}}]\label{main thm3}
    Let $X$ be a $k$-scheme essentially of finite type (=locally essentially of finite type and quasi-compact). Then there exists $M>0$ such that
$$ \operatorname{Der}_k^M(\mathcal{O}_X)= \operatorname{Der}_k^{M+1}(\mathcal{O}_X)=\dots=\operatorname{Ider}_k(\mathcal{O}_X),$$
where $\operatorname{Der}_k^m(\mathcal{O}_X)$ denotes the sheaf of $m$-integrable derivations (see Theorem \ref{localization}).
\end{theorem}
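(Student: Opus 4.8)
The plan is to deduce the scheme statement from the affine Theorem~\ref{main thm1} by a gluing argument, the only non-formal input being the localization compatibility of the modules involved. First I would recall that, by Theorem~\ref{localization} for finite $m$ together with Corollary~\ref{main cor} for $m=\infty$, the assignment $U=\Spec R \mapsto \operatorname{Der}_k^m(R)$ is compatible with localization of $R$ for every $m\le \infty$. Hence each $\operatorname{Der}_k^m(\mathcal{O}_X)$, and likewise $\operatorname{Ider}_k(\mathcal{O}_X)$, is a well-defined quasi-coherent subsheaf of $\operatorname{Der}_k(\mathcal{O}_X)$. Consequently, whether the inclusion $\operatorname{Der}_k^{m+1}(\mathcal{O}_X)\subseteq \operatorname{Der}_k^m(\mathcal{O}_X)$ of quasi-coherent subsheaves is an equality can be tested on the sections over the members of any affine open cover.

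Next, using that $X$ is quasi-compact, I would choose a \emph{finite} affine open cover $X=\bigcup_{i=1}^N U_i$ with $U_i=\Spec R_i$, where each $R_i$ is a $k$-algebra essentially of finite type (possible since $X$ is locally essentially of finite type). Applying Theorem~\ref{main thm1} to each $R_i$ yields an integer $M_i>0$ with $\operatorname{Der}_k^{M_i}(R_i)=\dots=\operatorname{Ider}_k(R_i)$. Setting $M:=\max_{1\le i\le N} M_i$, the chain then stabilizes at $M$ simultaneously on every $U_i$: for all $m\ge M$ and all $i$ one has $\operatorname{Der}_k^m(R_i)=\operatorname{Ider}_k(R_i)$. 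By the localization compatibility recalled above, this says exactly that the subsheaves $\operatorname{Der}_k^m(\mathcal{O}_X)$ and $\operatorname{Ider}_k(\mathcal{O}_X)$ of $\operatorname{Der}_k(\mathcal{O}_X)$ have the same sections over each member of the cover, whence they coincide as sheaves for all $m\ge M$, giving the desired $M$.

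The essential content is imported from the affine Theorem~\ref{main thm1}, and the globalization is formal once the sheaf-theoretic setup is in place; the finiteness of the cover (hence of the bound $M$) is precisely what quasi-compactness provides. The step that will require the most care is the passage from finite-$m$ compatibility to the $m=\infty$ compatibility, that is, the use of Corollary~\ref{main cor}: as noted in the introduction, the natural localization map is not surjective for $m=\infty$, so it is a priori unclear that $\operatorname{Ider}_k(\mathcal{O}_X)$ is quasi-coherent and that testing equality on an affine cover is legitimate. The hard part is thus exactly where Theorem~\ref{main thm1} enters, since it forces $\operatorname{Ider}_k(R)=\operatorname{Der}_k^{M}(R)$ for a \emph{finite} $M$, thereby reducing the $m=\infty$ case to the already-controlled finite case and guaranteeing the good localization behaviour on which the gluing depends.
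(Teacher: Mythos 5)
Your proposal is correct and matches the paper's (implicit) argument: the paper derives Theorem \ref{infty integrable thm schemes} from the affine case (Theorem \ref{infty integrable thm}) together with the localization compatibility of $\operatorname{Ider}_k$ (Corollary \ref{localization ider cor}) and a finite affine cover supplied by quasi-compactness, exactly as you do. Your remark that the only delicate point is the $m=\infty$ localization statement, which is itself reduced to the finite-$m$ case via the affine theorem, is also the paper's reasoning.
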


We briefly outline the contents of the paper.

In Section 2, we recall the definition of HS-derivations and results that will  be used later. A key relation is that the obstructions to extending HS-derivations are canonically dual to $m$-integrable derivations (Theorem \ref{duality}). We also define the property $\FL{i}$ for algebras and schemes, which states that leaps do not occur at every $m> p^i$ (Definition \ref{FL def}).

In Section 3, we introduce the notion of $a$-boundedness for HS-derivations. Intuitively, this concept measures how each term of a HS-derivation shrinks with respect to the $\mathfrak{m}$-adic topology.

In Section 4, we prove the finiteness of leaps in the local case. In this section we fix a local $k$-algebra $(R,\mathfrak{n})$ such that $\operatorname{Spec}R\setminus \{\mathfrak{n}\}$ satisfies $\FL{\tau}$ for $\tau>0$, and we will prove that $\operatorname{Spec}R$ itself satisfies the finiteness of leaps. The idea is that, starting from a bounded HS-derivation, we modify it step by step so that the obstruction at each stage becomes zero while the lower terms remain unchanged. We use the Artin-Rees lemma to obtain positive constants which control the bounds. The essential step is Theorem \ref{eq finie thm}, which states that elements of $$\mathfrak{n}^{e(q+1)+1} \cdot \operatorname{Der}_k^{q}(R)$$ do not leap, where $q=p^\tau$ and $e\gg 0$. Then the fact that $R/\mathfrak{n}^{e(q+1)+1} $ is of finite length implies the finiteness of leaps of $R$ (Theorems \ref{Her thm}, \ref{sec 4 main thm}).

In Section 5, we prove the finiteness of leaps in the global case and discuss its consequences. We use noetherian induction to reduce to the case of Section 4 (Theorem \ref{main thm global}). The finiteness of leaps of $m$-integrable derivations implies that of obstruction modules. This enables us to extend HS-derivations step by step (Lemma \ref{mn int lem}), and taking the limit, we obtain a HS-derivation of infinite length: This method yields a characterization of $\infty$-integrability (Theorem \ref{mn int theorem}).

Throughout this paper, we fix an algebraically closed field $k$ of positive characteristic $p$. We assume that any $k$-algebra is unital and commutative. We say that a $k$-scheme $X$ is essentially of finite type if there exists a finite open affine covering $\{U_i=\operatorname{Spec}R_i\}_{i=1,\dots,n}$ for $X$ where each $R_i$ is a $k$-algebra essentially of finite type. We assume $\mathbb{N}:=\{0,1,\dots\}$.

\section{Preliminaries}

We first recall basic facts of HS-derivations (cf. \cite{SchmidtHasse+1937+215+237}, \cite[Chapter 1]{NARVAEZMACARRO20122712}, \cite[\S 27]{Mat}) together with results on $m$-integrable derivations and obstruction modules (cf. \cite{NARVAEZMACARRO20122712}, \cite{Her}, \cite[\S 2]{miy2025}). The obstruction modules govern the extension of HS-derivations from length $m-1$ to $m$. These obstructions form coherent sheaves on schemes essentially of finite type (Theorem \ref{Ob sheaf}). Moreover, the ascending filtration of obstruction modules is naturally dual to the descending chain of
$m$-integrable derivations (Theorem \ref{duality}), a relation that plays a key role in establishing the finiteness of leaps.

\begin{definition}[= {\cite[Definition 1.2.1-1.2.6]{NARVAEZMACARRO20122712}}] \label{Hasse Schmidt}
Let $R$ be a $k$-algebra. Let $m\ge 1$ be an integer or $m=\infty$. Then a \emph{Hasse-Schmidt derivation} (abbreviated as HS-derivation) of length $m$ is a sequence $D=(D_i)_{i=0}^m$ of $k$-linear endomorphisms $R\to R$ such that $D_0=\operatorname{id}$ and that $$D_i(xy)= \sum_{j+k=i}D_j(x)D_k(y)$$ for every $x,y\in R$, $ i\ge 1.$ Let $E=(E_i)_{i=0}^m$ be another HS-derivation. We define the \emph{composition} of $D$ and $E$ by
$$D\circ E=(F_i)_{i=0}^m \mathrm{\ \ where \ } \ F_i(x)=\sum_{j+k=i}D_j(E_k(x)).$$
The symbol $\operatorname{HS}_k^m(R)(=\operatorname{HS}_k(R,m))$ denotes the set of all the HS-derivations of length $m.$ It forms a group under this compsition. Let $x\in R$. We define 
$$ x \bullet: \operatorname{HS}_k^m(R)\to\operatorname{HS}_k^m(R)$$
by
$$x\bullet (D_i)_{i=0}^m=(x^iD_i)_{i=0}^m.$$
For $n\ge1,$ we define
$$[n]:\operatorname{HS}_k^m(R)\to\operatorname{HS}_k^{mn+n-1}(R)$$
by
$$D[n]=(E_i)_{i=0}^{mn+n-1}\mathrm{\ \ where\ }\ E_{jn}=D_j \mathrm{\ for\ }0\le j\le m \mathrm{\ and \ } E_i=0\mathrm{\ otherwise.}$$
(Although $D[n]$ is usually defined to have length $mn$, it naturally extends to length $mn+n-1$ by adding zero terms.)
A HS-derivation $D=(D_i)_{i=0}^m$ defines a natural $A_m$-algebra automorphism $$R[\![t]\!]/(t^{m+1})\to R[\![t]\!]/(t^{m+1}),\ x\mapsto \sum_{i=0}^m D_i(x) t^i\quad \mathrm{for}\ x\in R$$ 
where $A_m:=k[\![t]\!]/t^{m+1}$. (We regard $t^\infty=0$.) This yields an isomorphism of groups between $\HS_k^m(R)$ and the group of $A_m$-algebra automorphisms of $R[\![t]\!]/(t^{m+1}) $
that become the identity when restricted to $ R[\![t]\!]/(t^{m+1})\otimes_{A_m} k=R$.
Let $1\le l \le m$. We define the \emph{truncation map} $$\operatorname{HS}_k^m(R)\to \operatorname{HS}_k^l(R)$$ by $$D=(D_i)_{i=0}^m\mapsto(D_i)_{i=0}^l.$$ 
It is a group homomorphism and commutes with $x\bullet$ for $x\in R$.
Note that for $m=1$, $\operatorname{HS}_k^1(R)$ identifies with the $R$-module $\operatorname{Der}_k(R,R)$ under $$D=(\operatorname{id},D_1)\mapsto D_1.$$
\end{definition}

\begin{definition}[= {\cite[Definition 2.1.1]{NARVAEZMACARRO20122712},\ \cite[\S 1]{matsumura1982integrable}}] \label{m- inf- integrable der}Let $R$ be a $k$-algebra and let $1\le m \le \infty$. We define the module of\emph{ $m$-integrable derivations }$\operatorname{Der}_k^m(R)$ as the image of
$$\operatorname{HS}_k^m(R)\overset{}{\to}\operatorname{HS}_k^1(R)\simeq \operatorname{Der}_k(R,R).$$ It is a sub-$R$-module of $\operatorname{Der}_k(R,R)$. We also denote $\operatorname{Der}_k^\infty(R)$ by $\operatorname{Ider}_k(R)$.
\end{definition}

Let $d\in \operatorname{Der}_k(R,R)$. For $1< m< \infty$, we say that $d$ leaps at $m$ if $d\in \operatorname{Der}_k^{m-1}(R)$ and $d\notin \operatorname{Der}_k^m(R)$. If $d\in \cap_{1<m<\infty} \operatorname{Der}_k^m(R)$, then $d$ does not leap. Let $S\subset \operatorname{Der}_k(R,R)$ be a subset. By $\operatorname{Leaps}(S)\subset \mathbb{N}$, we denote the set of leaps produced by $S$. The set of leaps of $R$ is $\operatorname{Leaps}(\operatorname{Der}_k(R,R))$.

\begin{theorem}[= {\cite[Corollary 2.3.5]{NARVAEZMACARRO20122712}}]
\label{localization}
    Let $R$ be a $k$-algebra essentially of finite type over $k$. Let $m\ge 1$ an integer. Let $S\subset R$ a multiplicatively closed subset. Then we have a canonical isomorphism $$\alpha: S^{-1}\operatorname{Der}_k^m(R)\overset{\sim}{\to} \operatorname{Der}_k^m(S^{-1}R).$$
\end{theorem}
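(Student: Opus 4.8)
The strategy is to compare everything inside the ambient module $\operatorname{Der}_k(S^{-1}R,S^{-1}R)$, exploiting the fact that the full module $\operatorname{Der}_k(-,-)$ already localizes correctly for algebras essentially of finite type. Since $R$ is essentially of finite type, $\Omega^1_{R/k}$ is a finitely presented $R$-module, so $\operatorname{Der}_k(R,R)=\operatorname{Hom}_R(\Omega^1_{R/k},R)$ commutes with the localization $S^{-1}(-)$; this yields a canonical isomorphism $\beta\colon S^{-1}\operatorname{Der}_k(R,R)\overset{\sim}{\to}\operatorname{Der}_k(S^{-1}R,S^{-1}R)$. Because $\operatorname{Der}_k^m(R)\subseteq\operatorname{Der}_k(R,R)$ and localization is exact, $S^{-1}\operatorname{Der}_k^m(R)$ embeds into $S^{-1}\operatorname{Der}_k(R,R)$, and the desired map $\alpha$ is simply the restriction of $\beta$ to this submodule. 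Granting that $\beta$ carries it into $\operatorname{Der}_k^m(S^{-1}R)$, the map $\alpha$ is automatically injective, and the whole theorem reduces to the equality of two $S^{-1}R$-submodules of $\operatorname{Der}_k(S^{-1}R,S^{-1}R)$, namely $\beta(S^{-1}\operatorname{Der}_k^m(R))$ and $\operatorname{Der}_k^m(S^{-1}R)$.

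For the inclusion $\beta(S^{-1}\operatorname{Der}_k^m(R))\subseteq\operatorname{Der}_k^m(S^{-1}R)$, I would first show that HS-derivations localize. Given $D=(D_i)_{i=0}^m\in\operatorname{HS}_k^m(R)$, view it as an $A_m$-algebra automorphism $\phi$ of $R[\![t]\!]/(t^{m+1})$ that is the identity modulo $t$. Each $s\in S$ has image in $S^{-1}R[\![t]\!]/(t^{m+1})$ equal to a unit plus a nilpotent, hence a unit, so $\phi$ extends uniquely to an $A_m$-algebra automorphism of $S^{-1}R[\![t]\!]/(t^{m+1})$, producing $S^{-1}D\in\operatorname{HS}_k^m(S^{-1}R)$. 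This assignment is a group homomorphism compatible with truncation to length $1$ and with the operations $x\bullet$ of Definition \ref{Hasse Schmidt}; taking length-$1$ parts, it sends $\operatorname{Der}_k^m(R)$ into $\operatorname{Der}_k^m(S^{-1}R)$ and agrees with $\beta$, giving the inclusion after localizing.

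The reverse inclusion $\operatorname{Der}_k^m(S^{-1}R)\subseteq\beta(S^{-1}\operatorname{Der}_k^m(R))$ is the main obstacle. Take $d\in\operatorname{Der}_k^m(S^{-1}R)$ and lift it to $E=(E_i)_{i=0}^m\in\operatorname{HS}_k^m(S^{-1}R)$ with $E_1=d$. I would aim to find $s\in S$ such that $s\bullet E=(s^iE_i)$ descends to an HS-derivation of $R$, for then its length-$1$ part $s\,d$ lies in $\operatorname{Der}_k^m(R)$ and $d=\beta((s\,d)/s)$ lands in $\beta(S^{-1}\operatorname{Der}_k^m(R))$. Since $S^{-1}R$ is essentially of finite type it is generated, after inverting finitely many elements, by finitely many elements of the form $r/u$ with $r\in R$ and $u\in S$; evaluating the $E_i$ on these generators produces only finitely many denominators, and the Leibniz rule of Definition \ref{Hasse Schmidt} lets one track how denominators accumulate through products. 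A sufficiently high power $s$ of the product of these denominators then clears all of them simultaneously, so that each $s^iE_i$ restricts to a $k$-linear endomorphism of $R$, with the HS-relation and $D_0=\operatorname{id}$ inherited termwise. The delicate point is that a single uniform power of $s$ suffices for all $i\le m$ at once rather than an $i$-dependent one; this is where finiteness of $m$ is used essentially, consistent with the remark in the introduction that the argument breaks down for $m=\infty$. Establishing this inclusion completes the identification of the two submodules and hence the proof.
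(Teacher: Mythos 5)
The paper does not actually prove this statement; it is imported verbatim from \cite[Corollary 2.3.5]{NARVAEZMACARRO20122712}, so there is no internal argument to compare against. Your strategy is the standard one and is mostly sound: the identification $\beta$ via finite presentation of $\Omega^1_{R/k}$, the localization of HS-derivations through the automorphism picture, and above all the observation that $x\bullet$ multiplies the $i$-th component by $x^i$ --- which is exactly the uniform rescaling needed, and which is available only because $m<\infty$ leaves finitely many elements $E_i$ whose denominators must be cleared. However, there is one genuine gap. Your key sentence, that a suitable $s\in S$ makes ``each $s^iE_i$ restrict to a $k$-linear endomorphism of $R$,'' presupposes that $R\to S^{-1}R$ is injective, which fails in general ($S$ may contain zero-divisors). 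What the denominator-clearing actually produces is an element of $\operatorname{HS}_k^m(R')$ for $R'=R/N$, $N=\ker(R\to S^{-1}R)$, and this only exhibits $sd$ as coming from $\operatorname{Der}_k^m(R')$, not from $\operatorname{Der}_k^m(R)$ as the statement requires. The gap is repairable: $N$ is finitely generated, hence annihilated by a single $s_0\in S$; choosing a $k$-linear section $\pi$ of $q\colon R\to R'$ and setting $\tilde F_0=\operatorname{id}$, $\tilde F_i=\pi\circ F_i\circ q$ for $i\ge 1$, the Leibniz defect of $\tilde F$ lies in $N$ and is killed by $s_0^i$, so $s_0\bullet\tilde F$ is a genuine element of $\operatorname{HS}_k^m(R)$ localizing to $s_0\bullet F$. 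But this step is absent from your argument and is not cosmetic.

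A secondary imprecision: $S^{-1}R$ is in general \emph{not} generated by finitely many elements after inverting finitely many elements (take $S=R\setminus\mathfrak p$), so ``evaluating the $E_i$ on these generators'' needs rephrasing. The correct version writes $R=W^{-1}B$ with $B$ of finite type over $k$, clears the denominators of the finitely many elements $E_i(x_j)$ for algebra generators $x_j$ of $B$ and $1\le i\le m$, deduces $(s\bullet E)_i(B')\subseteq R'$ from the Leibniz rule, and then handles the inverted elements $w\in W$ by the quotient rule
$$w\cdot (s\bullet E)_i(w^{-1})=-\sum_{a=1}^{i}(s\bullet E)_a(w)\,(s\bullet E)_{i-a}(w^{-1}),$$
which by induction on $i$ keeps $(s\bullet E)_i(w^{-1})$ inside $R'$ because $w$ is already a unit of $R$. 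With these two repairs your proof closes up; as written, the passage from $\operatorname{HS}_k^m(S^{-1}R)$ back to $\operatorname{HS}_k^m(R)$ is not established.
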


This implies that if $m<\infty$ and $X$ is a $k$-scheme essentially of finite type, its $m$-integrable derivations consist a sheaf on $X$, which is denoted by $\operatorname{Der}_k^m(\mathcal{O}_X)$.

\begin{theorem}[= {\cite[Theorem 4.1]{Her}}]\label{p^n leap} Let $R$ be a $k$-algebra essentially of finite type. For every integer $m\ge 2,$ the inclusion $\operatorname{Der}_k^m(R)\subset \operatorname{Der}_k^{m-1}(R) $ is proper only if $m$ is of the form $m=p^i$ for $i\ge 1$.
\end{theorem}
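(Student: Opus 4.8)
The plan is to prove the contrapositive: if $m\ge 2$ is not a power of $p$, then $\operatorname{Der}_k^m(R)=\operatorname{Der}_k^{m-1}(R)$. Since truncation always gives $\operatorname{Der}_k^m(R)\subseteq\operatorname{Der}_k^{m-1}(R)$, it suffices to show that every $D=(D_i)_{i=0}^{m-1}\in\operatorname{HS}_k^{m-1}(R)$ extends to an element of $\operatorname{HS}_k^m(R)$; applying this to a $D$ with $D_1=d$ shows that any $(m-1)$-integrable $d$ is $m$-integrable. Extending $D$ amounts to finding a $k$-linear map $D_m\colon R\to R$ with
$$D_m(xy)-x\,D_m(y)-D_m(x)\,y \;=\; \Phi_D(x,y)\;:=\!\!\sum_{\substack{j+k=m\\ 1\le j,k\le m-1}}\!\! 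D_j(x)\,D_k(y),$$
because the $j=0$ and $k=0$ terms in the Leibniz relation for $D_m$ produce exactly $xD_m(y)+D_m(x)y$. Using the HS-relations for $D$ one checks that $\Phi_D$ is a symmetric Hochschild $2$-cocycle, so a suitable $D_m$ exists precisely when $\Phi_D$ is a coboundary $\partial\delta(x,y)=x\delta(y)+\delta(x)y-\delta(xy)$.

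The core of the argument is a \emph{universal trivialization} of $\Phi_D$ when $m$ is not a power of $p$. First I would treat the smooth case: choosing a polynomial presentation $R=P/I$ with $P=k[X_\lambda]$ and lifting $D$ to $P$, formal smoothness gives $\operatorname{Der}_k^\infty(P)=\operatorname{Der}_k(P)$, so the extension $D_m^P$ exists over $P$ and is given by a \emph{universal} expression, polynomial in the $D_j$ with coefficients the multinomial numbers $\tfrac{1}{j_1!\cdots j_r!}$ attached to the compositions of $m$. The decisive input is then \textbf{Kummer's theorem}: $\binom{m}{j}$ is divisible by $p$ for every $0<j<m$ if and only if $m=p^i$. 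Hence, when $m$ is \emph{not} a power of $p$, the relevant combinations of these coefficients are $p$-integral, the universal formula for $D_m^P$ has coefficients in $\mathbb{Z}_{(p)}$, and it therefore survives reduction mod $p$ and descends to a well-defined $D_m$ over $R$ solving the displayed equation. When $m=p^i$ all interior binomials vanish mod $p$, the would-be denominators persist, the formula breaks down, and a genuine obstruction — the possible leap — appears; this is exactly why leaps are confined to powers of $p$.

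The step I expect to be the main obstacle is making the descent from $P$ to $R$ rigorous, i.e. verifying that the $p$-integral universal formula produces a map $R\to R$ satisfying the twisted Leibniz rule rather than merely a map $P\to P$: this requires that the obstruction in $\operatorname{Hom}_R(I/I^2,R)$ (the André–Quillen contribution) vanish, which again reduces to the same $p$-integrality controlled by Kummer's theorem. A technically cleaner route that sidesteps the cohomological bookkeeping is to build $D_m$ directly inside the group $\operatorname{HS}_k^{m-1}(R)$ using the operations $[n]$ and $x\bullet$ of Definition \ref{Hasse Schmidt}: the substitution $t\mapsto t^n$ and the scalings let one assemble, out of the given lower-length data, a length-$m$ HS-derivation whose first component is still $d$, and tracking the resulting coefficients reproduces precisely the binomial-divisibility condition. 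Either way, the arithmetic of $\binom{m}{j}\bmod p$ is the governing ingredient, and the reduction above shows it suffices to settle the single extension step from length $m-1$ to length $m$.
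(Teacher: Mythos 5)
The paper itself gives no proof of this statement --- it is quoted from \cite{Her} --- so your proposal can only be measured against the machinery the paper does set up, and there it has a genuine gap at the very first reduction. It is \emph{not} true in general that every $D\in\operatorname{HS}_k^{m-1}(R)$ extends to $\operatorname{HS}_k^{m}(R)$ when $m$ is not a power of $p$. By Proposition \ref{obstruction independence}, $D$ extends if and only if $\operatorname{ob}_m(D)=0$, and by Proposition \ref{np=p} the image of $\operatorname{ob}_m$ is $\operatorname{Ob}_R^{p^i}$, where $m=p^in$ with $p\nmid n$. For $i\ge 1$ and $n\ge 2$ (say $m=2p$ with $p$ odd), Theorem \ref{duality} shows $\operatorname{Ob}_R^{p^i}\neq 0$ whenever $R$ leaps at $p^i$, so for such $R$ there exist length-$(m-1)$ HS-derivations with nonvanishing obstruction that do \emph{not} extend. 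The theorem only asserts that the first component $d=D_1$ is still $m$-integrable, witnessed by a possibly different HS-derivation; your symmetric-cocycle setup, which tries to trivialize $\Phi_D$ for the \emph{given} $D$, is therefore attempting something false. The subsequent ``universal trivialization'' is also not an argument: over the polynomial ring $P$ the extension $D_m^P$ exists for free by formal smoothness (no multinomial denominators are involved in characteristic $p$), and the real obstruction is the class of $f_E\colon I/I^2\to R$ in $T^1_{R/k}$, which Kummer's theorem alone does not kill. Your correct observation that $\binom{m}{j}\not\equiv 0 \pmod p$ for some $0<j<m$ exactly when $m$ is not a $p$-power is indeed the combinatorial germ of the result, but it enters through the proof of Proposition \ref{np=p}, not through a $p$-integral formula for $D_m$.

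The route you mention only in passing --- using $[n]$ and the group structure --- is the one that works, and the missing ingredients are precisely Proposition \ref{np=p} together with the identity $\operatorname{ob}_{p^in}(E[n])=\operatorname{ob}_{p^i}(E)$ noted after Definition \ref{obstruction sp M,Ob}. Concretely: write $m=p^in$ with $p\nmid n$; since $m$ is not a $p$-power, $n\ge 2$. Given $D\in\operatorname{HS}_k^{m-1}(R)$ with $D_1=d$, Proposition \ref{np=p} gives $\operatorname{ob}_m(D)\in\operatorname{Ob}_R^{m}=\operatorname{Ob}_R^{p^i}$, so there is $E\in\operatorname{HS}_k^{p^i-1}(R)$ with $\operatorname{ob}_{p^i}(E)=-\operatorname{ob}_m(D)$. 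Then $E[n]\in\operatorname{HS}_k^{m-1}(R)$ satisfies $\operatorname{ob}_m(E[n])=-\operatorname{ob}_m(D)$ and has vanishing first component because $n\ge2$; since $\operatorname{ob}_m$ is a group homomorphism, $D\circ E[n]$ has zero obstruction, extends to length $m$, and still has first component $d$. (When $i=0$ one has $\operatorname{Ob}_R^m=\operatorname{Ob}_R^1=0$ and every $D$ does extend, which is the only case where your stronger claim holds.) This is exactly the mechanism of Lemma \ref{mn int lem}; I would rewrite your proof around it.
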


Let us recall the {\em first cotangent module} $T^1_{R/k}$ of a $k$-algebra $R$ essentially of finite type (cf. \cite[Definition 1.1.6]{Ser}). Let $T$ be a localization of a polynomial algebra over $k$ and $I\subset T$ an ideal such that $T/I=R.$ By \cite[Corollary 1.1.8]{Ser}, we have the following canonical exact sequence of $R$-modules

$$\operatorname{Hom}_R(\Omega_{T/k}\otimes_TR,R)\to \operatorname{Hom}_R(I/I^2,R)\to T^1_{R/k}\to 0.$$

\begin{definition}[{= \cite[Definition 2.11]{miy2025}}]\label{obstruction sp M,Ob}
    Let $R$ be a $k$-algebra essentially of finite type and $m\ge 2 $ an integer. We define $$\operatorname{ob}_m:\operatorname{HS}_k^{m-1}(R)\to T^1_{R/k}$$
    as follows: Let $T$ be a localization of a polynomial algebra over $k$ and $I\subset T$ an ideal such that $T/I=R.$ For $D\in \operatorname{HS}_k^{m-1}(R)$, suppose that $E\in \operatorname{HS}_k^m(T)$ is a lift of $D$. Then $E$ induces a $k$-algebra homomorphism $$\varphi_E :T\to R[\varepsilon]/\varepsilon^{m+1}.$$ We have $\varphi_E(I)\subset\varepsilon^m\cdot R[\varepsilon]/\varepsilon^{m+1}$ and $ \varphi_E(I^2)=0$ so that $\varphi_E$ induces $$f_E:I/I^2\to \varepsilon^m\cdot R[\varepsilon]/\varepsilon^{m+1}\simeq R.$$
 We define $\operatorname{ob}_m(D)$ as the image of $f_E$ under $\operatorname{Hom}_R(I/I^2,R)\to T^1_{R/k}.$ 
\end{definition}

We note that $\operatorname{ob}_{mn}(D[n])=\operatorname{ob}_{m}(D)$ for another integer $n>1$ because $E_{m}=E[n]_{mn}$ in the above construction.

\begin{proposition}[{= \cite[Proposition 2.12]{miy2025}}]\label{obstruction independence} In the situation of Definition \ref{obstruction sp M,Ob}, the map $\operatorname{ob}_m$ is a group homomorphism independent of the choices of $T, I, E$. A HS-derivation $D\in \operatorname{HS}_k^{m-1}(R)$ extends to an element of $\operatorname{HS}_k^{m}(R)$ if and only if $\operatorname{ob}_m(D)=0$. For every $D\in \operatorname{HS}_k^{m-1}(R)$ and $x\in R$, we have $$\operatorname{ob}_m(x\bullet D)=x^m\cdot \operatorname{ob}_m(D).$$
\end{proposition}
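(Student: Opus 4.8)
The plan is to read $\operatorname{ob}_m$ as the obstruction class of a standard lifting problem along the square-zero extension $R[\varepsilon]/\varepsilon^{m+1}\to R[\varepsilon]/\varepsilon^{m}$, whose kernel $\varepsilon^m R[\varepsilon]/\varepsilon^{m+1}$ is canonically $R$. I identify $D\in\operatorname{HS}_k^{m-1}(R)$ with the $k$-algebra map $\varphi_D\colon R\to R[\varepsilon]/\varepsilon^{m}$, $r\mapsto\sum_{i<m}D_i(r)\varepsilon^i$, and a lift $E$ with $\varphi_E\colon T\to R[\varepsilon]/\varepsilon^{m+1}$, so that $\varphi_E$ lifts $\varphi_D\circ\pi$, where $\pi\colon T\to R$. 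Existence of a lift $E$ comes from formal smoothness of $T$: define the components $E_i$ on the polynomial generators as arbitrary lifts of $D_i(\pi(x_j))$, extend by the Leibniz rule and uniquely to the localization $T$ to obtain a length-$(m-1)$ HS-derivation reducing to $D$, then extend from length $m-1$ to $m$ using that $\operatorname{ob}$ vanishes on the smooth algebra $T$.

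First I would record two computations. (i) \emph{Independence of the lift.} If $E,E'$ lift the same $D$, then $\varphi_E-\varphi_{E'}$, viewed as a map $T\to\varepsilon^m R[\varepsilon]/\varepsilon^{m+1}\cong R$, is a $k$-derivation $\delta\in\operatorname{Der}_k(T,R)=\operatorname{Hom}_R(\Omega_{T/k}\otimes_T R,R)$, and $f_E-f_{E'}$ is the restriction of $\delta$ to $I/I^2$, i.e. the image of $\delta$ under the first arrow of the $T^1$-sequence; hence $f_E$ and $f_{E'}$ define the same class in $T^1_{R/k}$. Conversely, given any $\delta$, lifting it to $\tilde\delta\in\operatorname{Der}_k(T,T)$ (possible since $\Omega_{T/k}$ is projective) and replacing $E_m$ by $E_m+\tilde\delta$ yields a new lift realizing $f_E+\delta|_{I/I^2}$; thus $\{f_E : E \text{ a lift of } D\}$ is exactly a coset of the image of $\operatorname{Hom}_R(\Omega_{T/k}\otimes_T R,R)$. (ii) \emph{Values on $I$.} For $a\in I$ one has $\varphi_E(a)=f_E(a)\varepsilon^{m}$.

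With these the three assertions follow. For the vanishing criterion, (i) shows $\operatorname{ob}_m(D)=0$ iff some lift has $f_E=0$, i.e. iff $\varphi_E$ kills $I$ and factors through a ring map $g\colon R\to R[\varepsilon]/\varepsilon^{m+1}$ lifting $\varphi_D$; such $g$ are precisely the extensions $\tilde D\in\operatorname{HS}_k^{m}(R)$ of $D$. For additivity, choosing lifts $E,E'$ of $D,D'$, the composite $E\circ E'$ lifts $D\circ D'$, and expanding $\varphi_{E\circ E'}(a)=\sum_{j,k}\overline{E_j(E'_k(a))}\,\varepsilon^{j+k}$ for $a\in I$ and using that $E'_k(a)\in I$ and $E_j(I)\subset I$ for $j,k<m$, only the terms $(j,k)=(0,m),(m,0)$ survive in degree $m$, giving $f_{E\circ E'}=f_E+f_{E'}$ exactly. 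For the scaling formula, lifting $x$ to $\tilde x\in T$, the HS-derivation $\tilde x\bullet E$ lifts $x\bullet D$ and $\varphi_{\tilde x\bullet E}=\sigma_x\circ\varphi_E$ with $\sigma_x(\varepsilon)=x\varepsilon$; by (ii), $\varphi_{\tilde x\bullet E}(a)=f_E(a)x^m\varepsilon^m$, so $f_{\tilde x\bullet E}=x^m f_E$ and $\operatorname{ob}_m(x\bullet D)=x^m\operatorname{ob}_m(D)$.

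The main work is independence of the presentation $(T,I)$. Here I would take the well-definedness of the target $T^1_{R/k}$ from \cite{Ser} and show the class is compatible with morphisms of presentations. Given $h\colon T\to T'$ with $h(I)\subset I'$ inducing $\operatorname{id}_R$, precomposition induces the canonical isomorphism between the $T^1_{R/k}$ computed from $T'$ and from $T$, while $\varphi_{E'}\circ h$ lifts $\varphi_D\circ\pi$ and satisfies $f_{\varphi_{E'}\circ h}=f_{E'}\circ h^{*}$; comparing $\varphi_{E'}\circ h$ with any genuine lift over $T$ through the coset property (i) then matches the two classes. To connect an arbitrary pair of presentations, I would dominate both by a common one: lifting the polynomial generators of $T$ into $T'$ and localizing $T'$ at the image of the multiplicative set inverted in $T$ produces a morphism of presentations. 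I expect this presentation-independence to be the only genuinely delicate point; in particular the obstacle is that $\varphi_{E'}\circ h$ need not itself arise from an HS-derivation of $T$, which the coset argument of (i) circumvents. Additivity, vanishing, and scaling are then formal consequences of (i) and (ii).
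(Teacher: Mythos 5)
The paper itself gives no proof of this proposition --- it is imported verbatim from \cite[Proposition 2.12]{miy2025} --- so there is no in-text argument to compare against; your proof is the standard obstruction-theoretic one that any proof of this statement must essentially follow, and it is correct. The key computations all check out: two lifts of $\varphi_D\circ\pi$ to $R[\varepsilon]/\varepsilon^{m+1}$ differ in degree $\varepsilon^m$ by a $k$-derivation $T\to R$; the coset of $\operatorname{im}\bigl(\operatorname{Hom}_R(\Omega_{T/k}\otimes_TR,R)\to\operatorname{Hom}_R(I/I^2,R)\bigr)$ is full because $\Omega_{T/k}$ is projective and adding a derivation to $E_m$ alone preserves the Hasse--Schmidt identities; in $(E\circ E')_m(a)$ for $a\in I$ only the terms $(j,k)=(m,0),(0,m)$ survive modulo $I$; and $\sigma_x\circ\varphi_E$ gives the $x^m$-scaling. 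You also correctly identify and resolve the one delicate point in presentation-independence, namely that $\varphi_{E'}\circ h$ need not arise from an HS-derivation of $T$, which your coset comparison (applicable to arbitrary $k$-algebra lifts of $\varphi_D\circ\pi$, not only those of the form $\varphi_E$) circumvents.
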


\begin{definition}[{= \cite[Definition 2.13]{miy2025}}]\label{Ob}
  Let $R$ be a $k$-algebra essentially of finite type and $m\ge 2 $ an integer.  We define the $m$-th \emph{obstruction module} $\operatorname{Ob}^m_R\subset T^1_{R/k}$ as the image of $\operatorname{ob}_m$. We set $\operatorname{Ob}_R^{1}=0\subset T^1_{R/k}$. At this point, this is an abelian group.
\end{definition}

\begin{proposition}[{= \cite[Proposition 2.14]{miy2025}}]\label{Ob module}
In the situation of Definition \ref{obstruction sp M,Ob}, let us assume $m=p^i, i\ge1$. Then $\operatorname{Ob}^{p^i}_R$ is a finitely generated $R^{p^i}$-module.  If $S\subset R$ is a multiplicatively closed subset, then $(S^{p^i})^{-1}\operatorname{Ob}_R^{p^i}=\operatorname{Ob}^{p^i}_{S^{-1}R}$. 
\end{proposition}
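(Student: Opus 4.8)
The plan is to prove the two assertions in turn, each time using the scaling relation $\operatorname{ob}_{p^i}(x\bullet D)=x^{p^i}\cdot\operatorname{ob}_{p^i}(D)$ from Proposition \ref{obstruction independence} to exhibit the $R^{p^i}$-module structure.

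For the finite generation I would first observe that $R$ is module-finite over the subring $R^{p^i}=\{r^{p^i}:r\in R\}$. Indeed, $k$ is perfect, so $k=k^{p^i}$ and a polynomial algebra $k[x_1,\dots,x_n]$ is free of finite rank over its subalgebra $k[x_1^{p^i},\dots,x_n^{p^i}]=k[x_1,\dots,x_n]^{p^i}$, with basis the monomials of exponents $<p^i$; this module-finiteness is inherited by quotients and by localizations, hence holds for every $R$ essentially of finite type. The defining presentation $\operatorname{Hom}_R(I/I^2,R)\to T^1_{R/k}\to 0$ shows that $T^1_{R/k}$ is a finitely generated $R$-module, so by the previous step it is also a finitely generated $R^{p^i}$-module. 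By the scaling relation, $\operatorname{Ob}^{p^i}_R=\operatorname{im}(\operatorname{ob}_{p^i})$ is stable under multiplication by every $x^{p^i}$, i.e. it is an $R^{p^i}$-submodule of $T^1_{R/k}$ (it is a subgroup since $\operatorname{ob}_{p^i}$ is a homomorphism). Since $R^{p^i}$ is a homomorphic image of $R$ under the Frobenius $r\mapsto r^{p^i}$, it is noetherian, and a submodule of a finitely generated module over a noetherian ring is finitely generated; this proves the first claim.

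For the localization statement I would first fix the ambient module. The presentation of $T^1$ consists of finitely generated modules over the noetherian ring $R$, so it commutes with localization and yields $T^1_{S^{-1}R/k}=S^{-1}T^1_{R/k}$. Because $S$ is multiplicatively closed we have $S^{p^i}\subseteq S$, while $s\mid s^{p^i}$ shows that inverting $S^{p^i}$ inverts $S$; combined with associativity of tensor products and the fact that $R$ is an $R^{p^i}$-algebra, this gives $(S^{p^i})^{-1}T^1_{R/k}=S^{-1}T^1_{R/k}$ as localizations. Hence $(S^{p^i})^{-1}\operatorname{Ob}^{p^i}_R$ and $\operatorname{Ob}^{p^i}_{S^{-1}R}$ are both $(S^{p^i})^{-1}R^{p^i}$-submodules of the single module $T^1_{S^{-1}R/k}$. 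The inclusion $(S^{p^i})^{-1}\operatorname{Ob}^{p^i}_R\subseteq\operatorname{Ob}^{p^i}_{S^{-1}R}$ will follow from the compatibility of the construction of Definition \ref{obstruction sp M,Ob} with localization: an HS-derivation of $R$ extends canonically to $S^{-1}R$, and localizing a lift $E$ produces a commutative square identifying $\operatorname{ob}_{p^i}$ for $S^{-1}R$, on the image of $\operatorname{HS}_k^{p^i-1}(R)\to\operatorname{HS}_k^{p^i-1}(S^{-1}R)$, with the localization of $\operatorname{ob}_{p^i}$ for $R$.

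The reverse inclusion is the heart of the matter and proceeds by clearing denominators. Given $D'\in\operatorname{HS}_k^{p^i-1}(S^{-1}R)$, write $R=W^{-1}R_0$ with $R_0$ finitely generated; then the values $D'_j(x_l)$ of $D'$ on algebra generators $x_l$ of $R_0$ have denominators in $S$ (up to $W$), and a single $s\in S$ clears the $S$-part for all $j\le p^i-1$ and all $l$ at once. The $\bullet$-action then supplies exactly the powers needed: $(s\bullet D')_j(x_l)=s^jD'_j(x_l)\in R$, and the higher Leibniz rule forces $s\bullet D'$ to preserve $R$, so $s\bullet D'$ is the image of some $D\in\operatorname{HS}_k^{p^i-1}(R)$. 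The scaling relation gives $s^{p^i}\cdot\operatorname{ob}_{p^i}(D')=\operatorname{ob}_{p^i}(s\bullet D')$, which lies in the image of $\operatorname{Ob}^{p^i}_R$ by the inclusion already established; dividing by $s^{p^i}\in S^{p^i}$ shows $\operatorname{ob}_{p^i}(D')\in(S^{p^i})^{-1}\operatorname{Ob}^{p^i}_R$, completing the equality. I expect the main obstacles to be the careful verification of the compatibility square for $\operatorname{ob}_{p^i}$ under localization and the check that the powers $s^j$ furnished by the $\bullet$-action suffice to clear all denominators simultaneously while preserving the Hasse-Schmidt relations; both are routine but require attention.
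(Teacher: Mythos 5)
The paper does not actually prove this proposition; it is imported verbatim from the reference \cite{miy2025}, so there is no in-text argument to compare against. Your proposal is, however, a correct and essentially self-contained proof using exactly the ingredients the paper makes available: the image of $\operatorname{ob}_{p^i}$ is a subgroup because $\operatorname{ob}_{p^i}$ is a group homomorphism, the scaling relation $\operatorname{ob}_{p^i}(x\bullet D)=x^{p^i}\operatorname{ob}_{p^i}(D)$ makes it an $R^{p^i}$-submodule of the finitely generated module $T^1_{R/k}$ (finite over the noetherian ring $R^{p^i}$ since $R$ is module-finite over $R^{p^i}$), and the localization statement follows by clearing denominators with a single $s$ via the $\bullet$-action. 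The only step you should spell out slightly more is that $(s\bullet D')_j(R)\subset R$ for elements of $R=W^{-1}R_0$ with $W$-denominators: this uses not just the Leibniz rule on the generators of $R_0$ but also that the associated automorphism of $S^{-1}R[\![t]\!]/(t^{p^i})$ sends each $w\in W$ to a unit of $R[\![t]\!]/(t^{p^i})$ (constant term $w$), so that $\Phi(a/w)=\Phi(a)\Phi(w)^{-1}$ stays in $R[\![t]\!]/(t^{p^i})$; this is the standard uniqueness-of-extension argument for HS-derivations under localization and closes the gap you flagged as routine.
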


\begin{proposition}[{= \cite[Proposition 2.16]{miy2025}}]\label{np=p}
In the situation of Definition \ref{obstruction sp M,Ob}, let us assume $m=p^in,i\ge0,n\ge2$ where $p\nmid n$. Then we have $\operatorname{Ob}^m_R=\operatorname{Ob}^{p^i}_R$.
\end{proposition}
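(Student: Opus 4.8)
The plan is to show both inclusions, of which $\operatorname{Ob}^{p^i}_R\subseteq\operatorname{Ob}^{p^i n}_R$ is immediate. Writing $q=p^i$, for any $D\in\operatorname{HS}_k^{q-1}(R)$ the stretched derivation $D[n]$ lies in $\operatorname{HS}_k^{qn-1}(R)$ and satisfies $\operatorname{ob}_{qn}(D[n])=\operatorname{ob}_{q}(D)$, so $\operatorname{ob}_{q}(D)\in\operatorname{Ob}^{qn}_R$; thus $\operatorname{Ob}^{q}_R=\operatorname{ob}_{qn}(\operatorname{im}[n])\subseteq\operatorname{Ob}^{qn}_R$. The whole statement will therefore follow once I show that $\operatorname{ob}_{qn}$ already attains all of its values on the image of $[n]$, that is, $\operatorname{Ob}^{qn}_R=\operatorname{ob}_{qn}(\operatorname{im}[n])=\operatorname{Ob}^{q}_R$.

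The hypothesis $p\nmid n$ is what makes this possible, through the action of the group $\mu_n$ of $n$-th roots of unity: since $k$ is algebraically closed of characteristic $p\nmid n$, $\mu_n$ consists of $n$ distinct elements of $k$. A primitive $\zeta\in\mu_n$ acts on $G:=\operatorname{HS}_k^{qn-1}(R)$ by $E\mapsto\zeta\bullet E=(\zeta^iE_i)_{i}$, a group automorphism of order $n$ that commutes with truncation. Its fixed subgroup is exactly $\{E:E_i=0\text{ for }n\nmid i\}$, and this is precisely $\operatorname{im}[n]$: for such an $E$ the subsequence $(E_{jn})_{j=0}^{q-1}$ is a Hasse-Schmidt derivation $D$ with $E=D[n]$. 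Moreover $\operatorname{ob}_{qn}$ is $\mu_n$-invariant, since Proposition \ref{obstruction independence} gives $\operatorname{ob}_{qn}(\zeta\bullet E)=\zeta^{qn}\operatorname{ob}_{qn}(E)=(\zeta^{n})^{q}\operatorname{ob}_{qn}(E)=\operatorname{ob}_{qn}(E)$.

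It remains to prove the crux: every value of $\operatorname{ob}_{qn}$ is attained at a $\mu_n$-fixed point. Fix $c\in\operatorname{Ob}^{qn}_R$ and consider the fiber $\operatorname{ob}_{qn}^{-1}(c)$, which is a coset of the normal subgroup $\ker(\operatorname{ob}_{qn})$ of HS-derivations extendable to length $qn$, hence a torsor under it. By the $\mu_n$-invariance just noted this fiber is $\mu_n$-stable, so $\mu_n$ acts on it, and I claim it has a $\mu_n$-fixed point: the obstruction to finding one lies in the non-abelian cohomology $H^1(\mu_n,\ker(\operatorname{ob}_{qn}))$, and since $\ker(\operatorname{ob}_{qn})$ is filtered by the truncation maps with successive graded pieces the $R$-module $\operatorname{Der}_k(R)$ --- on which $\mu_n$ acts $R$-linearly and $|\mu_n|=n$ is invertible --- the usual averaging argument shows this cohomology vanishes. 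A fixed point $E'$ then lies in $G^{\mu_n}=\operatorname{im}[n]$, say $E'=D[n]$, so $c=\operatorname{ob}_{qn}(E')=\operatorname{ob}_{q}(D)\in\operatorname{Ob}^{q}_R$; hence $\operatorname{Ob}^{qn}_R\subseteq\operatorname{Ob}^{q}_R$. I expect this last step --- running the coprime-order averaging through the non-abelian group $\ker(\operatorname{ob}_{qn})$ (filtered only by abelian pieces), with the corrections realized by genuinely extendable derivations lifted through a smooth presentation $T\to R$ --- to be the main obstacle; everything else is formal given the relation $\operatorname{ob}_{qn}(D[n])=\operatorname{ob}_{q}(D)$ and the $\mu_n$-action.
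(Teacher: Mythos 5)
The paper does not actually prove this proposition; it is imported verbatim from \cite{miy2025}, so there is no in-text argument to compare yours against. Judged on its own terms, your proof is sound in outline and uses what is surely the intended mechanism: the easy inclusion $\operatorname{Ob}_R^{q}\subseteq\operatorname{Ob}_R^{qn}$ via $\operatorname{ob}_{qn}(D[n])=\operatorname{ob}_q(D)$ is exactly the remark the paper makes after Definition \ref{obstruction sp M,Ob}; the identification of the $\mu_n$-fixed subgroup of $\operatorname{HS}_k^{qn-1}(R)$ with $\operatorname{im}[n]$ is correct (for $n\nmid i$ the scalar $\zeta^i-1$ is a unit of $k$, forcing $E_i=0$, and the surviving components $(E_{jn})_j$ automatically satisfy the Leibniz identities); and the $\mu_n$-invariance of $\operatorname{ob}_{qn}$ follows from $\operatorname{ob}_{qn}(x\bullet E)=x^{qn}\cdot\operatorname{ob}_{qn}(E)$ in Proposition \ref{obstruction independence}, together with $\zeta^{qn}=1$. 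The reduction of the hard inclusion to finding a $\mu_n$-fixed point in each fiber of $\operatorname{ob}_{qn}$, i.e.\ to the triviality of a class in $H^1(\mu_n,\ker\operatorname{ob}_{qn})$, is also correct.

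The one step you should not leave at the level of ``the usual averaging argument'' is precisely that $H^1$-vanishing: the coefficient group $N:=\ker\operatorname{ob}_{qn}$ is non-abelian, so averaging does not apply to it directly. The fix is the d\'evissage you gesture at: set $N_j=\{E\in N: E_1=\dots=E_j=0\}$; each $N_j$ is normal in $\operatorname{HS}_k^{qn-1}(R)$ and $\mu_n$-stable (truncation commutes with $x\bullet$), the quotient $N_j/N_{j+1}$ embeds into $(\operatorname{Der}_k(R),+)$ via $E\mapsto E_{j+1}$, and the exact sequences of pointed sets attached to $1\to N_j/N_{j+1}\to N/N_{j+1}\to N/N_j\to 1$ let you induct on $j$. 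Two inaccuracies in your statement of this, neither fatal: the graded pieces are in general proper subgroups of $\operatorname{Der}_k(R)$, not all of it, and they are not visibly $R$- or even $k$-submodules. What you actually need is that multiplication by $n$ is bijective on each $N_j/N_{j+1}$; this holds because the image of $N_j/N_{j+1}$ in $\operatorname{Der}_k(R)$ is closed under multiplication by any $\lambda\in k$ (apply $\mu\bullet$ with $\mu^{j+1}=\lambda$, which preserves $N$ since $\operatorname{ob}_{qn}(\mu\bullet E)=\mu^{qn}\operatorname{ob}_{qn}(E)=0$), so it is a $k$-vector space and $n$ acts invertibly. Then $H^1(\mu_n,N_j/N_{j+1})$ is killed by $n=|\mu_n|$ and hence vanishes, and the induction closes. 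Equivalently, one can run the coboundary construction by hand, correcting the base point of the fiber degree by degree within $N$ itself (so no lift through $T\to R$ is needed for the corrections). With these details supplied, the proof is complete.
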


By Propositions \ref{Ob module} and \ref{np=p}, we obtain:

\begin{theorem}[{= \cite[Theorem 2.17]{miy2025}}]\label{Ob sheaf}
    Let $X$ be a scheme essentially of finite type over $k$. For each $m\ge 1$, there exists a subsheaf
    $$\operatorname{Ob}_X^{m}\subset T_X^1$$
    such that for every affine open subscheme $\operatorname{Spec}R\subset X,$ we have
    $$\operatorname{Ob}_X^{m}(\operatorname{Spec}R)=\operatorname{Ob}_R^m\subset T_R^1.$$ It is a sub-$\mathcal{O}_X^{p^i}$-module of $T_X^1$, where $i$ is the largest integer such that $p^i$ divides $m$. It is also a coherent $\mathcal{O}_X$-module via $\mathcal{O}_X\to \mathcal{O}_X^{p^i}$.
    
\end{theorem}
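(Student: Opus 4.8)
The plan is to reduce to the case $m=p^i$ and then manufacture the sheaf from the finitely generated $R^{p^i}$-module structure supplied by Proposition \ref{Ob module}, using its localization formula to verify the gluing. First I would dispose of the indices that are not pure prime-power multiples: writing $m=p^in$ with $p\nmid n$ and $n\ge 2$, Proposition \ref{np=p} gives $\operatorname{Ob}_R^m=\operatorname{Ob}_R^{p^i}$ on every affine chart, so it suffices to construct the sheaf for $m=p^i$ and then set $\operatorname{Ob}_X^m:=\operatorname{Ob}_X^{p^i}$ (for $i=0$ this is the zero sheaf $\operatorname{Ob}_X^1=0$).

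For $m=p^i$ and an affine $\operatorname{Spec}R\subset X$, I would use that the inclusion $R^{p^i}\hookrightarrow R$ is integral and purely inseparable, hence induces a universal homeomorphism $\pi\colon\operatorname{Spec}R\to\operatorname{Spec}R^{p^i}$; under $\pi$ the sheaf of rings $\mathcal{O}_{\operatorname{Spec}R}^{p^i}$ of $p^i$-th powers is identified with $\mathcal{O}_{\operatorname{Spec}R^{p^i}}$, so sheaves of $\mathcal{O}^{p^i}$-modules on $\operatorname{Spec}R$ correspond to quasi-coherent sheaves on $\operatorname{Spec}R^{p^i}$. By Proposition \ref{Ob module}, $\operatorname{Ob}_R^{p^i}$ is a finitely generated $R^{p^i}$-module, so its associated coherent sheaf on $\operatorname{Spec}R^{p^i}$ transports to a coherent sheaf of $\mathcal{O}^{p^i}$-modules on $\operatorname{Spec}R$ with global sections $\operatorname{Ob}_R^{p^i}$. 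The decisive point is that a principal open $D(f)\subset\operatorname{Spec}R$ corresponds under $\pi$ to $D(f^{p^i})\subset\operatorname{Spec}R^{p^i}$, and the localization identity $(S^{p^i})^{-1}\operatorname{Ob}_R^{p^i}=\operatorname{Ob}_{S^{-1}R}^{p^i}$ of Proposition \ref{Ob module}, taken for $S=\{f^n\}_{n\ge 0}$, computes the corresponding sections as $\operatorname{Ob}_{R_f}^{p^i}$. Thus the assignment $\operatorname{Spec}R\mapsto\operatorname{Ob}_R^{p^i}$ is a sheaf on the basis of affine opens of $X$ with the correct restriction maps, and the local pieces glue over a finite affine cover into a single sheaf $\operatorname{Ob}_X^{p^i}$ with the prescribed sections.

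It remains to record the structural claims. The inclusions $\operatorname{Ob}_R^{p^i}\hookrightarrow T^1_{R/k}$ are compatible with localization (again Proposition \ref{Ob module}, together with the usual localization of $T^1$), so they assemble into an injection $\operatorname{Ob}_X^{p^i}\hookrightarrow T_X^1$ of sheaves. Coherence over $\mathcal{O}_X^{p^i}$ is exactly the finite generation just used. For coherence over $\mathcal{O}_X$ through the Frobenius surjection $\mathcal{O}_X\to\mathcal{O}_X^{p^i}$, I would observe that restriction of scalars along a surjection of rings preserves finite generation, so a finitely generated $R^{p^i}$-module stays finitely generated over $R$, whence $\operatorname{Ob}_X^{p^i}$ is $\mathcal{O}_X$-coherent. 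The main obstacle throughout is purely the two-ring bookkeeping: every object lives naturally over $\mathcal{O}_X^{p^i}$ rather than $\mathcal{O}_X$, and the localization formula twists the multiplicative set by $p^i$; once one matches $D(f)$ on $\operatorname{Spec}R$ with $D(f^{p^i})$ on $\operatorname{Spec}R^{p^i}$, the sheaf axioms and the coherence statements become formal.
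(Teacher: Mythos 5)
Your proposal is correct and takes essentially the same approach as the paper, which obtains this theorem directly from Propositions \ref{Ob module} and \ref{np=p}; you have simply written out explicitly the reduction to $m=p^i$, the gluing via the localization identity $(S^{p^i})^{-1}\operatorname{Ob}_R^{p^i}=\operatorname{Ob}^{p^i}_{S^{-1}R}$, and the coherence bookkeeping over $\mathcal{O}_X\to\mathcal{O}_X^{p^i}$ that the paper leaves implicit.
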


\begin{theorem}[{= \cite[Theorem 2.25]{miy2025}}]\label{duality}
Let $X$ be a scheme essentially of finite type over $k$. Then there exist two filtrations of sheaves on $X$ such that
$$\operatorname{Der}_k(\mathcal{O}_X)\supset\operatorname{Der}_k^p(\mathcal{O}_X)\supset\operatorname{Der}_k^{p^2}(\mathcal{O}_X)\supset\dots$$
and that
$$0\subset \operatorname{Ob}^p_X\subset\operatorname{Ob}^{p^2}_X\subset\dots\subset T^1_{X/k}.$$ For every $i=1,2,\dots,$ we have
$$\operatorname{Der}_k^{p^{i-1}}(\mathcal{O}_X)/\operatorname{Der}_k^{p^i}(\mathcal{O}_X)\simeq \operatorname{Ob}_X^{p^i}/\operatorname{Ob}_X^{p^{i-1}}. $$
\end{theorem}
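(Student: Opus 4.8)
The plan is to reduce to the affine case and then manufacture the isomorphism out of the obstruction homomorphisms $\operatorname{ob}_{p^i}$, exploiting that leaps can occur only at powers of $p$.

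First I would reduce to the situation $X=\operatorname{Spec}R$ with $R$ essentially of finite type. Both filtrations are sheaves compatible with localization: the terms $\operatorname{Der}_k^{p^i}(\mathcal{O}_X)$ by Theorem \ref{localization}, and the terms $\operatorname{Ob}_X^{p^i}$ by Theorem \ref{Ob sheaf} together with Proposition \ref{Ob module}. Since the isomorphism I will build is canonical and assembled from the obstruction maps (which themselves localize), the affine isomorphisms glue; so it suffices to produce, for each $i\ge 1$, a natural isomorphism $\operatorname{Der}_k^{p^{i-1}}(R)/\operatorname{Der}_k^{p^i}(R)\simeq \operatorname{Ob}_R^{p^i}/\operatorname{Ob}_R^{p^{i-1}}$. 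Fix $i$ and write $q=p^i$.

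I would define a homomorphism $\Phi\colon \operatorname{Der}_k^{p^{i-1}}(R)\to \operatorname{Ob}_R^{q}/\operatorname{Ob}_R^{p^{i-1}}$ as follows. Given $d\in \operatorname{Der}_k^{p^{i-1}}(R)$, Theorem \ref{p^n leap} gives $\operatorname{Der}_k^{p^{i-1}}(R)=\operatorname{Der}_k^{q-1}(R)$, so $d$ lifts to some $D\in \operatorname{HS}_k^{q-1}(R)$; set $\Phi(d)=[\operatorname{ob}_{q}(D)]$. Two facts are immediate. First, for any $C\in \operatorname{HS}_k^{p^{i-1}-1}(R)$ the spread-out $C[p]\in \operatorname{HS}_k^{q-1}(R)$ satisfies $(C[p])_1=0$ and $\operatorname{ob}_{q}(C[p])=\operatorname{ob}_{p^{i-1}}(C)$, which both proves the filtration inclusion $\operatorname{Ob}_R^{p^{i-1}}\subseteq \operatorname{Ob}_R^{q}$ and shows $\operatorname{Ob}_R^{p^{i-1}}\subseteq \operatorname{ob}_q(\{E\in\operatorname{HS}_k^{q-1}(R):E_1=0\})$. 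Second, $\operatorname{ob}_q$ surjects onto $\operatorname{Ob}_R^q$ by Definition \ref{Ob}, and every $D\in \operatorname{HS}_k^{q-1}(R)$ has $D_1\in \operatorname{Der}_k^{q-1}(R)=\operatorname{Der}_k^{p^{i-1}}(R)$, so $\Phi$ will be surjective once it is well defined.

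The crux is the following key lemma: if $E\in \operatorname{HS}_k^{q-1}(R)$ satisfies $E_1=0$, then $\operatorname{ob}_{q}(E)\in \operatorname{Ob}_R^{p^{i-1}}$. Granting it, the rest is formal. Any two lifts of $d$ differ by $E=D^{-1}\circ D'$ with $E_1=0$, so $\operatorname{ob}_q(D')-\operatorname{ob}_q(D)=\operatorname{ob}_q(E)\in \operatorname{Ob}_R^{p^{i-1}}$ by Proposition \ref{obstruction independence}; thus $\Phi$ is well defined, and combined with the first fact above we get $\operatorname{ob}_q(\{E:E_1=0\})=\operatorname{Ob}_R^{p^{i-1}}$. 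Consequently $\Phi(d)=0$ iff $\operatorname{ob}_q(D)\in\operatorname{Ob}_R^{p^{i-1}}$ iff $D$ may be multiplied by some $E$ with $E_1=0$ to obtain an HS-derivation with vanishing obstruction and unchanged first term, i.e. iff $d\in\operatorname{Der}_k^{q}(R)$. Hence $\ker\Phi=\operatorname{Der}_k^{q}(R)$, and the first isomorphism theorem yields the desired isomorphism; its compatibility with localization delivers the sheaf statement.

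For the key lemma I would argue by descending induction on the order of vanishing, writing $E\in G_n=\{E\in\operatorname{HS}_k^{q-1}(R):E_1=\cdots=E_{n-1}=0\}$. Whether $E$ extends to length $q$ is, by Proposition \ref{obstruction independence}, governed by the solvability of $E_q(xy)-E_q(x)y-xE_q(y)=c_E(x,y)$ with $c_E(x,y)=\sum_{j+k=q,\,j,k\ge 1}E_j(x)E_k(y)$; when $n>q/2$ no two surviving indices sum to $q$, so $c_E=0$, the choice $E_q=0$ extends $E$, and $\operatorname{ob}_q(E)=0$. This is the base case. For the inductive step the leading term $E_n$ is a $k$-derivation $\delta$, and I would produce a model $B\in G_n$ with $B_n=\delta$ and $\operatorname{ob}_q(B)\in\operatorname{Ob}_R^{p^{i-1}}$, so that $E\circ B^{-1}\in G_{n+1}$ and $\operatorname{ob}_q(E)=\operatorname{ob}_q(B)+\operatorname{ob}_q(E\circ B^{-1})$ closes the induction. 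When $p\mid n$, writing $n=pn'$ one takes $B=C[p]$ for a suitable $C\in G_{n'}$ of leading term $\delta$, so that $\operatorname{ob}_q(B)=\operatorname{ob}_{p^{i-1}}(C)\in\operatorname{Ob}_R^{p^{i-1}}$ by the spreading relation. The delicate case, and the main obstacle, is $p\nmid n$: here one must show that a derivation inserted at a degree not divisible by $p$ contributes nothing beyond $\operatorname{Ob}_R^{p^{i-1}}$ — precisely the local incarnation of ``leaps occur only at powers of $p$'' (Theorem \ref{p^n leap}). Controlling $c_E$ in that case, reducing to leading symbols via the module relation $\operatorname{ob}_q(x\bullet E)=x^{q}\operatorname{ob}_q(E)$, is where the substantive work lies.
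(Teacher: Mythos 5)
The paper itself gives no proof of this statement: it is quoted verbatim from \cite[Theorem 2.25]{miy2025}, so there is nothing internal to compare against. Judged on its own terms, your skeleton is the natural one and is almost certainly the right architecture: define $\Phi(d)=[\operatorname{ob}_{p^i}(D)]$ for a lift $D\in\HS_k^{p^i-1}(R)$ of $d$ (using Theorem \ref{p^n leap} to lift past $p^{i-1}$), check well-definedness, read off surjectivity from the definition of $\operatorname{Ob}_R^{p^i}$, and identify the kernel with $\operatorname{Der}_k^{p^i}(R)$ via Proposition \ref{obstruction independence}. Those formal steps are fine. But the entire substance of the theorem is concentrated in your ``key lemma'' ($E_1=0$ implies $\operatorname{ob}_{p^i}(E)\in\operatorname{Ob}_R^{p^{i-1}}$), and that lemma is not proved: you explicitly leave open the case $p\nmid n$ of your induction, which is precisely the hard content (it is essentially Proposition \ref{np=p} in local form, and it is what makes leaps occur only at $p$-th powers). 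A proof that defers exactly the step where characteristic $p$ enters is not a proof.

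There is also a gap in the case you do claim to handle, $n=pn'$. You need a model $C\in\HS_k^{p^{i-1}-1}(R)$ with $C_1=\cdots=C_{n'-1}=0$ and $C_{n'}=\delta$, where $\delta=E_n$ is the leading term of $E$. Such a $C$ cannot be obtained by padding with zeros: the Leibniz identity in degree $2n'$ forces $C_{2n'}(xy)-C_{2n'}(x)y-xC_{2n'}(y)=\delta(x)\delta(y)$, so one needs something like a divided power $\delta^{[2]}$, which need not exist in characteristic $p$. In effect you are assuming a shifted integrability of $\delta$ that is part of what must be proved; and since the only element of $G_n$ with leading term $\delta$ that you know exists a priori is $E$ itself, the induction as written is in danger of being circular. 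To repair this one needs an independent construction of the models $B$ --- e.g.\ via the explicit generators of the kernels of the truncation maps $\HS_k^m(T)\to\HS_k^{m-1}(T)$ on the smooth cover $T$ and the substitution/spreading calculus --- which is exactly the machinery of \cite[\S 2]{miy2025} that the citation is standing in for.
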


At this point, we do not know whether $0\subset \operatorname{Ob}^p_X\subset\operatorname{Ob}^{p^2}_X\subset\dots$ becomes stationary. This is because each term carries a distinct module structure, and we cannot apply the Noetherian property of $T_{X/k}^1$.

\begin{definition}\label{FL def}
    Let $R$ [resp.\ $X$] be a $k$-algebra essentially of finite type [resp.\ $k$-scheme essentially of finite type]. Let $i\ge 1$ be an integer. Then we say that $R$ [resp.\ $X$] satisfies $\FL{i}$ if we have
    $$\operatorname{Ob}_R^{p^i}=\operatorname{Ob}_R^{p^{i+1}}=\operatorname{Ob}_R^{p^{i+2}}=\dots $$
    [resp. $\operatorname{Ob}_X^{p^i}=\operatorname{Ob}_X^{p^{i+1}}=\dots $].
\end{definition}

    Note that this is equivalent to say that $$\operatorname{Der}_k^{p^i}(R)=\operatorname{Der}_k^{p^{i+1}}(R)=\operatorname{Der}_k^{p^{i+2}}(R)=\dots.$$

\section{On $a$-boundedness}

\begin{definition}
    Let $a=(a_n)_{n=1,2,\dots}$ be a sequence in $\mathbb{N}$. Then we say that $a$ is {\em subadditive} if $a_{n+m}\le a_n+a_m$.
\end{definition}

\begin{lemma}
    Let $a=(a_n)_{n=1,2,\dots}$ and $b=(b_n)_{n=1,2,\dots}$ be sequences in $\mathbb{N}$. Let $a+b$ and $\max\{a,b\}$ be sequences defined by
    $$a+b:=(a_n+b_n)_n,\ \ \max\{a,b\}:=(\max\{a_n,b_n\})_n.$$
    Then if $a$ and $b$ are subadditive, $a+b$ and $\max\{a,b\}$ are also subadditive.
\end{lemma}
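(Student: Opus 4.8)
The plan is to verify both statements directly from the defining inequality of subadditivity, since each reduces to a one-line manipulation of the hypotheses. Fix integers $n,m\ge 1$; by assumption we have $a_{n+m}\le a_n+a_m$ and $b_{n+m}\le b_n+b_m$, and the goal in each case is to produce the corresponding inequality for the combined sequence.

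For $a+b$, I would simply add the two hypothesis inequalities. This gives
$$a_{n+m}+b_{n+m}\le (a_n+a_m)+(b_n+b_m)=(a_n+b_n)+(a_m+b_m),$$
which is exactly the subadditivity of $a+b$ evaluated at the pair $(n,m)$. No case analysis is needed here.

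For $\max\{a,b\}$, I would bound each of $a_{n+m}$ and $b_{n+m}$ separately by the right-hand side and then take the maximum on the left. Concretely, since $a_n\le \max\{a_n,b_n\}$ and $a_m\le \max\{a_m,b_m\}$, the hypothesis gives
$$a_{n+m}\le a_n+a_m\le \max\{a_n,b_n\}+\max\{a_m,b_m\},$$
and the identical chain with $b$ in place of $a$ yields the same upper bound for $b_{n+m}$. Since both $a_{n+m}$ and $b_{n+m}$ are dominated by $\max\{a_n,b_n\}+\max\{a_m,b_m\}$, so is their maximum, which is precisely the subadditivity of $\max\{a,b\}$.

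I do not anticipate any genuine obstacle, as the statement is a purely formal consequence of monotonicity of addition and the elementary inequality $\max\{x,y\}\le c$ whenever $x\le c$ and $y\le c$. The only point deserving a word of care is that membership in $\mathbb{N}$ is preserved under both operations, so that $a+b$ and $\max\{a,b\}$ are again sequences in $\mathbb{N}$ and the notion of subadditivity applies to them; this is immediate since $\mathbb{N}$ is closed under addition and under taking maxima.
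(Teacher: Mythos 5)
Your proof is correct and is exactly the standard argument one would expect; the paper in fact states this lemma without any proof, treating it as immediate. Both parts of your verification (adding the two subadditivity inequalities for $a+b$, and bounding $a_{n+m}$ and $b_{n+m}$ separately by $\max\{a_n,b_n\}+\max\{a_m,b_m\}$ before taking the maximum) are sound, so there is nothing to correct.
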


\begin{definition}\label{a-bounded}
    Let $a=(a_n)_{n=1,2,\dots}$ be a subadditive sequence in $\mathbb{N}$, $R$ be a $k$-algebra and $\mathfrak{m}\subset R$ be an ideal. Let $D=(D_n)_{n=0}^m \in \HS_k ^m (R)$ for $1\le m\le\infty$. Then we say that $D$ is {\em $\mathfrak{m}$-adically $a$-bounded} [resp.\ {\em $\mathfrak{m}$-logarithmically $a$-bounded}] if
    $$D_n(R)\subset \mathfrak{m}^{a_n}$$
    for every $n$ [resp.\
    $$D_n(\mathfrak{m}^{n'})\subset \mathfrak{m}^{a_n+n'}$$
    for every $n,n'$]. When $\mathfrak{m}$ is clear from the context (e.g., when $R$ is a local ring and $\mathfrak{m}$ is the maximal ideal), we simply say {\em $a$-bounded} [resp.\ {\em logarithmically $a$-bounded}].
\end{definition}

Note that the composition of logarithmically $a$-bounded HS-derivations is again logarithmically $a$-bounded because $a$ is subadditive.

\begin{lemma}\label{a to log lem}
    In the setting of \ref{a-bounded}, let
    $$\widetilde{a}=(a_1+1,a_2+1,a_3+1,\dots).$$
    If $D=(D_n)_{n=0}^m \in \HS_k ^m (R)$ is $\widetilde{a}$-bounded, then it is logarithmically $a$-bounded.
\end{lemma}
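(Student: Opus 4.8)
The plan is to reduce the logarithmic bound to the adic bound one monomial at a time, exploiting the generalized Leibniz rule for Hasse-Schmidt derivations. Since each $D_n$ is $k$-linear (hence additive) and $\mathfrak{m}^{n'}$ is generated as an abelian group by products $x_1\cdots x_{n'}$ with all $x_j\in\mathfrak{m}$, it suffices to prove $D_n(x_1\cdots x_{n'})\in\mathfrak{m}^{a_n+n'}$ for every such product. The case $n=0$ is immediate since $D_0=\operatorname{id}$, and the case $n'=0$ follows from $\widetilde{a}$-boundedness together with $\mathfrak{m}^{a_n+1}\subset\mathfrak{m}^{a_n}$; so I may assume $n,n'\ge 1$.

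First I would record the multi-factor Leibniz formula
$$D_n(x_1\cdots x_{n'})=\sum_{i_1+\dots+i_{n'}=n}D_{i_1}(x_1)\cdots D_{i_{n'}}(x_{n'}),$$
obtained by iterating the defining identity $D_i(xy)=\sum_{j+k=i}D_j(x)D_k(y)$ of Definition \ref{Hasse Schmidt}. The strategy is then to bound each summand on the right separately and to check that every one of them lies in $\mathfrak{m}^{a_n+n'}$.

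The heart of the argument is the bookkeeping for a fixed tuple $(i_1,\dots,i_{n'})$ with $\sum_j i_j=n$. For an index $j$ with $i_j=0$ we have $D_0(x_j)=x_j\in\mathfrak{m}$, contributing one factor of $\mathfrak{m}$; for an index $j$ with $i_j\ge 1$, $\widetilde{a}$-boundedness gives $D_{i_j}(x_j)\in D_{i_j}(R)\subset\mathfrak{m}^{a_{i_j}+1}$. Writing $J=\{\,j:i_j\ge 1\,\}$, the product therefore lies in $\mathfrak{m}^s$ with
$$s=(n'-|J|)+\sum_{j\in J}(a_{i_j}+1)=n'+\sum_{j\in J}a_{i_j}.$$
Since $\sum_{j\in J}i_j=n$ and $a$ is subadditive, iterating $a_{p+q}\le a_p+a_q$ yields $a_n\le\sum_{j\in J}a_{i_j}$, whence $s\ge a_n+n'$. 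This is exactly where the extra ``$+1$'' in $\widetilde{a}$ is used: the $i_j=0$ factors supply only a single power of $\mathfrak{m}$ each, and the surplus ``$+1$'' carried by each nontrivial factor is precisely what compensates, so that no power is lost in the passage from the adic bound to the logarithmic one.

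I expect the only delicate point to be this exponent count: one must be careful that the terms with $i_j=0$ contribute the full factor $\mathfrak{m}^1$ (using $x_j\in\mathfrak{m}$, not merely $x_j\in R$) and that the iterated subadditivity is applied to the nonzero indices, whose sum is still $n$. Once each summand is placed in $\mathfrak{m}^{a_n+n'}$, summing over the finitely many tuples and over the generators of $\mathfrak{m}^{n'}$ finishes the proof.
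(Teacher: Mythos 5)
Your proof is correct and is essentially the paper's argument in unrolled form: the paper establishes the same bound by induction on $n'$, peeling off one factor of $\mathfrak{m}$ at a time via the two-factor Leibniz rule, whereas you expand all $n'$ factors at once with the multinomial Leibniz formula. The key bookkeeping is identical in both --- indices with $i_j=0$ contribute a single power of $\mathfrak{m}$ from $x_j\in\mathfrak{m}$, indices with $i_j\ge 1$ carry the extra ``$+1$'' from $\widetilde{a}$-boundedness, and subadditivity of $a$ applied to the nonzero indices (nonempty since $n\ge 1$) closes the gap --- so no new idea is needed and none is missing.
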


\begin{proof}
    First observe that $\widetilde{a}$ is also subadditive. We show that by induction of $n'=0,1,\dots,$ we have
    $$D_n(\mathfrak{m}^{n'})\subset \mathfrak{m}^{a_n+n'}$$
    for every $n$. The case $n'=0$ is trivial as $\widetilde{a}\ge a$. Suppose that the claim is true for $n'$ and let $x\in \mathfrak{m}^{n'}, y\in \mathfrak{m}$. Then
    $$D_n(xy)=\sum_{i=0}^{n}D_{i}(x)D_{n-i}(y).$$
    For $i=0,1,\dots, n-1$, we have $$D_{i}(x)D_{n-i}(y)\in \mathfrak{m}^{a_{i}+n'}\cdot \mathfrak{m}^{a_{n-i}+1}\subset  \mathfrak{m}^{a_{n}+n'+1}.$$ For $i=n$, we also have $$D_n(x)D_{0}(y)\in \mathfrak{m}^{a_{n}+n'}\cdot \mathfrak{m}=\mathfrak{m}^{a_{n}+n'+1}.$$ Thus, we have $D_n(xy)\in \mathfrak{m}^{a_{n}+n'+1}$. As $\mathfrak{m}^{n'+1}$ is generated by such $xy$, this completes the induction.
    \end{proof}

\section{The local case}
In this section, we assume the following setting.

\begin{setting}\label{local setting}
    $(T,\mathfrak{m})$ is a formally smooth local $k$-algebra that is a localization of a polynomial ring in finite variables at a prime ideal (not necessarily maximal) and $I\subset \mathfrak{m}$ is an ideal of $T$. Set $(R,\mathfrak{n})=(T/I,\mathfrak{m}/I)$; this is a local $k$-algebra essentially of finite type. $X=\Spec R$ and $U=X\setminus \{\mathfrak{n}\}$ are $k$-schemes essentially of finite type and $\iota: U\hookrightarrow X$ is an open immersion. The $k$-scheme $U$ satisfies $\FL{\tau}$ for an integer $\tau\ge 1$. We put $q=p^{\tau}$.
\end{setting}

Note that this also includes the case where $X$ consists of a single point, where $U=\emptyset$ satisfies $\FL{\tau}$ for obvious reasons. The aim of this section is to show that $X$ itself satisfies $\FL{\theta}$ for another sufficiently large integer $\theta$. The idea is to start with a bounded HS-derivation and modify it step by step so that the obstruction at each stage becomes zero while the lower terms remain unchanged. To achieve this, we first obtain certain explicit positive constants and use them to construct suitable subadditive sequences.

\begin{lemma}\label{hom lem}
    Let $M:=I/I^2$ and let $H_n:=\Hom_R(M,\mathfrak{n}^n)$ for $n=1,2,\dots.$ We regard $H_n$ as submodules of $H:=\Hom_R(M,R)$. Then there exists a positive integer $C_1$ such that $$H_n\subset \mathfrak{n}^{n-C_1}\cdot H\quad (\forall n\ge C_1).$$
\end{lemma}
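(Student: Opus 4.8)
The plan is to reduce the statement to the Artin--Rees lemma by realizing $H$ and each $H_n$ as nested submodules of a single finite free $R$-module. Since $R$ is essentially of finite type over $k$ it is Noetherian, and $M = I/I^2$ is a finitely generated $R$-module, so I would fix a finite presentation
$$R^s \xrightarrow{A} R^r \to M \to 0.$$
Applying the left-exact functor $\Hom_R(-,N)$ to this presentation, for any $R$-module $N$, identifies $\Hom_R(M,N)$ with $\ker(A^\ast \colon N^r \to N^s)$, where $A^\ast$ is the map given by the transpose of the matrix $A$ (with the same entries in $R$, independently of $N$).

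First I would take $N = R$ to view $H = \Hom_R(M,R)$ as $\ker(A^\ast \colon R^r \to R^s) \subseteq R^r$. Next, taking $N = \mathfrak{n}^n$ and using functoriality with respect to the inclusion $\mathfrak{n}^n \hookrightarrow R$, I would identify $H_n = \Hom_R(M,\mathfrak{n}^n)$, regarded as a submodule of $H$, with those elements of $H$ all of whose coordinates lie in $\mathfrak{n}^n$; that is,
$$H_n = H \cap \mathfrak{n}^n R^r$$
inside $R^r$. This is the crucial identification, since it converts the Hom-theoretic statement into an intersection of the fixed submodule $H$ with the $\mathfrak{n}$-adic filtration of the ambient free module.

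Then I would invoke the Artin--Rees lemma for the submodule $H \subseteq R^r$ with respect to $\mathfrak{n}$: there is an integer $c \ge 0$ such that
$$\mathfrak{n}^n R^r \cap H = \mathfrak{n}^{n-c}\bigl(\mathfrak{n}^c R^r \cap H\bigr)$$
for all $n \ge c$. Since $\mathfrak{n}^c R^r \cap H \subseteq H$, the right-hand side lies in $\mathfrak{n}^{n-c} H$, and combining with the identification above yields $H_n \subseteq \mathfrak{n}^{n-c} H$. Setting $C_1 = \max\{c,1\}$ then gives the desired bound $H_n \subseteq \mathfrak{n}^{n-C_1}\cdot H$ for all $n \ge C_1$.

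I expect the only delicate point to be the second identification $H_n = H \cap \mathfrak{n}^n R^r$: one must check that the inclusion $\mathfrak{n}^n \hookrightarrow R$ induces, via left-exactness, precisely the embedding of $\Hom_R(M,\mathfrak{n}^n)$ into $\Hom_R(M,R)$ as the submodule of homomorphisms landing in $\mathfrak{n}^n$, and that under the coordinate description (coordinates being the values on a chosen generating set of $M$) this matches $H \cap \mathfrak{n}^n R^r$. Once this is in place the remainder is a direct application of Artin--Rees with no further computation; in particular, the argument uses no feature of Setting \ref{local setting} beyond the Noetherianity of $R$.
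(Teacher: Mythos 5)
Your proposal is correct and follows essentially the same route as the paper: both take a finite presentation of $M=I/I^2$, use left-exactness of $\Hom$ to identify $H_n$ with $H\cap \mathfrak{n}^nR^r$ inside the free module, and then conclude by Artin--Rees applied to $H\subset R^r$. The only (harmless) difference is that you spell out the identification and the choice of $C_1$ slightly more explicitly than the paper does.
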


\begin{proof}
    Let $R^{e_1}\to R^{e_2}\to M\to 0$ be
    a finite presentation of $M$. For each $n=1,2,\dots$, we have the following commutative diagram
\[\begin{tikzcd}
	0 & {H_n} & {(\mathfrak{n}^n)^{e_2}} & {(\mathfrak{n}^n)^{e_1}} \\
	0 & H & {R^{e_2}} & {R^{e_1}}
	\arrow[from=1-1, to=1-2]
	\arrow[from=1-2, to=1-3]
	\arrow[hook, from=1-2, to=2-2]
	\arrow[from=1-3, to=1-4]
	\arrow[hook, from=1-3, to=2-3]
	\arrow[hook, from=1-4, to=2-4]
	\arrow[from=2-1, to=2-2]
	\arrow[from=2-2, to=2-3]
	\arrow[from=2-3, to=2-4]
\end{tikzcd}\]
From this diagram, we obtain $H_n=H\cap (\mathfrak{n}^n)^{e_2}$ where the intersection is taken inside $R^{e_2}$. Applying the Artin-Rees lemma to the inclusion $H\subset R^{e_2}$ with respect to the $\mathfrak{n}$-adic filtration yields the claim.
\end{proof}

\begin{lemma}\label{G lem}
    Let $M:=I/I^2$ and let $G$ be the image of the natural homomorphism $\Hom_R(\Omega_{T/k}\otimes_TR,R)\to \Hom_R(M,R)=H$. Then there exists a positive integer $C_2$ such that $$G\cap (\mathfrak{n}^n\cdot H)\subset \mathfrak{n}^{n-C_2}\cdot G\quad (\forall n\ge C_2).$$
\end{lemma}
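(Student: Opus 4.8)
The plan is to mirror the argument of Lemma \ref{hom lem}, but now applying the Artin-Rees lemma directly to the inclusion $G \subset H$ rather than to $H \subset R^{e_2}$. The key observation is that $G \cap (\mathfrak{n}^n \cdot H)$ is precisely $\mathfrak{n}^n H \cap G$, the intersection of a submodule with a power of the $\mathfrak{n}$-adic filtration on the ambient module $H$, which is exactly the kind of object that the Artin-Rees lemma is designed to control.

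First I would record that everything in sight is finitely generated over the Noetherian ring $R$, which is essentially of finite type over $k$. Since $I$ is finitely generated over the Noetherian ring $T$, the module $M = I/I^2$ is a finitely generated $R$-module, hence $H = \Hom_R(M,R)$ is finitely generated, and $G$, being a submodule of $H$, is finitely generated as well. Then I would apply the Artin-Rees lemma to the pair $G \subset H$ with respect to the $\mathfrak{n}$-adic filtration: this produces an integer $c \ge 0$ such that
$$\mathfrak{n}^n H \cap G = \mathfrak{n}^{\,n-c}\bigl(\mathfrak{n}^c H \cap G\bigr)$$
for all $n \ge c$. Since $\mathfrak{n}^c H \cap G \subset G$, the right-hand side is contained in $\mathfrak{n}^{\,n-c} G$, so setting $C_2 := c$ yields
$$G \cap (\mathfrak{n}^n \cdot H) = \mathfrak{n}^n H \cap G \subset \mathfrak{n}^{\,n-C_2} G$$
for all $n \ge C_2$, which is the assertion.

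There is essentially no serious obstacle here; the only points requiring care are the finite generation of $G$ and $H$, which is what justifies invoking Artin-Rees, and the mild bookkeeping of replacing the submodule $\mathfrak{n}^c H \cap G$ by the larger module $G$ in passing to the final inclusion. In spirit this lemma is simply the companion of Lemma \ref{hom lem}, transplanting the same Artin-Rees mechanism from the filtration of $R^{e_2}$ to the filtration of $H$.
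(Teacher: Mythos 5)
Your proof is correct and is precisely the argument the paper intends: the paper's own proof is the one-line remark that the claim ``also follows from the Artin--Rees lemma,'' and your application of Artin--Rees to the inclusion $G \subset H$ of finitely generated modules over the Noetherian local ring $R$, followed by the inclusion $\mathfrak{n}^{n-c}(\mathfrak{n}^c H \cap G) \subset \mathfrak{n}^{n-c}G$, is the standard way to fill that in. Nothing further is needed.
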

\begin{proof}
    This also follows from the Artin-Rees lemma.
\end{proof}

\begin{definition}
    Let $i\ge 0$ be an integer, let $S$ be a $k$-algebra and let $J\subset S$ be an ideal. Define $$J^{[p^i]}:=\{x^{p^i}|x\in J\},$$ an ideal of $S^{p^i}$. It is the image of $J$ under the surjection $S\to S^{p^i}$. In particular, we have a natural inclusion $J^{[p^i]}S\subset J^{p^i}$ of ideals of $S.$ Also, observe that $(J^j)^{[p^i]}=(J^{[p^i]})^j$ for any $j\ge 0$.
\end{definition}

Note that there exists a positive integer $c$ such that for every $i\ge 0$ and $n\ge 0$, we have $(\mathfrak{n}^{[p^i]})^nR\supset \mathfrak{n}^{p^i(n+c)}$. Indeed, suppose that $\mathfrak{n}$ is generated by $x_1,\dots,x_m$. Then $\mathfrak{n}^{p^i(n+c)}$ is generated by monomials of the form
$$x_1^{\nu_1}\dots x_m^{\nu_m},\ \nu_1+\dots +\nu_m=p^i(n+c).$$
 For any $c\ge m-1$, we see that $x_1^{\nu_1}\dots x_m^{\nu_m}\in (\mathfrak{n}^{[p^i]})^nR$.

\begin{lemma}\label{i lem}
    There exists a positive integer $C_3$ such that, for every integer $i\ge \tau$, we have $$\operatorname{Ob}_R^{p^i}\cap (\mathfrak{n}^{nq}\cdot T_{R/k}^1)\subset (\mathfrak{n}^{n-C_3})^{[q]} \cdot \operatorname{Ob}_R^{q}\quad (\forall n\ge C_3).$$
\end{lemma}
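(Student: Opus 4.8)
The plan is to work entirely inside the finite $R$-module $T^1_{R/k}$, writing $N:=\operatorname{Ob}_R^{q}$ and $N^i:=\operatorname{Ob}_R^{p^i}$ for $i\ge\tau$. By Propositions \ref{obstruction independence} and \ref{Ob module}, $N$ is a finitely generated $R^{q}$-submodule and each $N^i$ a finitely generated $R^{p^i}$-submodule of $T^1_{R/k}$, where $R^{q},R^{p^i}\subseteq R$ act by genuine multiplication (not via the Frobenius twist of Theorem \ref{Ob sheaf}); moreover $N\subseteq N^i$. Since $k$ is perfect and $R$ is essentially of finite type, $R$ is module-finite over $R^{q}=R^{p^\tau}$, hence $T^1_{R/k}$ is a finite $R^{q}$-module. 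Throughout I write $\mathfrak a:=\mathfrak n^{[q]}$ for the maximal ideal of $R^{q}$, and I record $\mathfrak a^{s}\subseteq\mathfrak n^{s}$ as subsets of $R$.

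The main obstacle is that the bound must be \emph{uniform in $i$}, whereas a priori the chain $N^\tau\subseteq N^{\tau+1}\subseteq\cdots$ is not known to stabilize (indeed that is essentially what Section 4 aims to prove). I would overcome this by a saturation trick. Using $\FL{\tau}$ on $U$ together with the localization formula $(S^{p^i})^{-1}\operatorname{Ob}_R^{p^i}=\operatorname{Ob}_{S^{-1}R}^{p^i}$ of Proposition \ref{Ob module}, I first show that for every prime $\mathfrak p\ne\mathfrak n$ the localizations of $N^i$ and $N$ coincide (as $\operatorname{Spec}R_{\mathfrak p}\subseteq U$ inherits $\FL{\tau}$). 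Thus $N^i/N$ is a finite-length $R^{p^i}$-module supported at the closed point, so for each $\xi\in N^i$ there is $s$ with $\mathfrak n^{s}\xi\subseteq N$, and therefore $\mathfrak a^{s}\xi\subseteq N$. Consequently every $N^i$ lands in the \emph{single} $R^{q}$-module
$$N^{\mathrm{sat}}:=\{\xi\in T^1_{R/k}\mid \mathfrak a^{s}\xi\subseteq N\ \text{for some }s\},$$
the $\mathfrak a$-saturation of $N$ inside $T^1_{R/k}$. Because $T^1_{R/k}/N$ is a finite $R^{q}$-module, $N^{\mathrm{sat}}/N$ has finite length, so $N^{\mathrm{sat}}$ is finitely generated and there is a \emph{fixed} constant $C$ with $\mathfrak a^{C}N^{\mathrm{sat}}\subseteq N$. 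This yields the uniform inclusion $N^i\subseteq N^{\mathrm{sat}}$ for all $i\ge\tau$.

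It then remains to combine three ingredients, all independent of $i$. First, the constant $c$ introduced just before the lemma gives (taking its defining inequality with $n$ replaced by $n-c$) the comparison $\mathfrak n^{qn}T^1_{R/k}\subseteq\mathfrak a^{\,n-c}T^1_{R/k}$ for $n\ge c$, translating the $\mathfrak n$-adic filtration into the $\mathfrak a$-adic one. Second, the Artin–Rees lemma applied to $N^{\mathrm{sat}}\subseteq T^1_{R/k}$ over the Noetherian ring $R^{q}$ with the ideal $\mathfrak a$ furnishes a constant $C'$ with $N^{\mathrm{sat}}\cap\mathfrak a^{m}T^1_{R/k}\subseteq\mathfrak a^{\,m-C'}N^{\mathrm{sat}}$ for $m\ge C'$. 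Third, the annihilator bound $\mathfrak a^{C}N^{\mathrm{sat}}\subseteq N$ from the previous paragraph. Chaining these for $\xi\in N^i\cap(\mathfrak n^{qn}T^1_{R/k})$, and using $N^i\subseteq N^{\mathrm{sat}}$, I obtain
$$N^i\cap(\mathfrak n^{qn}T^1_{R/k})\subseteq N^{\mathrm{sat}}\cap\mathfrak a^{\,n-c}T^1_{R/k}\subseteq\mathfrak a^{\,n-c-C'}N^{\mathrm{sat}}\subseteq\mathfrak a^{\,n-c-C'-C}N,$$
valid once $n-c-C'\ge C$. Setting $C_3:=c+C'+C$ and recalling $\mathfrak a^{\,n-C_3}=(\mathfrak n^{\,n-C_3})^{[q]}$ gives the asserted inclusion for all $n\ge C_3$, uniformly in $i\ge\tau$.

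In summary, the genuine difficulty is concentrated entirely in the uniform step $N^i\subseteq N^{\mathrm{sat}}$: the punctured-spectrum hypothesis $\FL{\tau}$ forces each quotient $N^i/N$ to be supported at the closed point, and the key point is that all of these finite-length quotients are bounded by the \emph{one} finitely generated saturation module $N^{\mathrm{sat}}$, whose annihilator exponent $C$ is then independent of $i$. Once this is established, the final estimate is a routine interplay between the Artin–Rees lemma and the comparison of the $\mathfrak n$-adic and $\mathfrak a$-adic filtrations supplied by the constant $c$.
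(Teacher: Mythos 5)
Your proof is correct and is essentially the paper's own argument: your saturation module $N^{\mathrm{sat}}$ is exactly the sheaf $F$ the paper defines (sections of $T^1_{X/k}$ restricting into $\operatorname{Ob}_U^{q}$), and the three final ingredients (the constant $c$, Artin--Rees for $F\subset T^1_{R/k}$ over $R^{q}$, and the annihilator exponent coming from $F/\operatorname{Ob}_R^{q}$ being supported at the closed point) are the same. One small imprecision: the intermediate claim ``$\mathfrak n^{s}\xi\subseteq N$'' is not justified as stated, since $N^i$ and $N$ are only $R^{p^i}$- and $R^{q}$-submodules and $\mathfrak n$ does not preserve them; but the conclusion you actually need, $\mathfrak a^{s}\xi\subseteq N$, follows at once from your observation that the localizations of $N^i$ and $N$ agree away from $\mathfrak n$, by noting that the image of $\xi$ in the finite $R^{q}$-module $T^1_{R/k}/N$ is then supported at the closed point of $\operatorname{Spec}R^{q}$ and hence killed by a power of $\mathfrak a$.
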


\begin{proof}
Let $F$ be the subsheaf of $T_{X/k}^1$ consisting of sections $s$ such that $s|_U\in \operatorname{Ob}_U^{q}$. Then $F$ is a coherent subsheaf of $T_{X/k}^1$ via $\mathcal{O}_X\to \mathcal{O}_X^{q}$ (note that $R$ is finite over $R^q$ as $k$ is algebraically closed). Moreover, we have
$$\operatorname{Ob}_U^{q}=\operatorname{Ob}_U^{p^{\tau+1}}=\operatorname{Ob}_U^{p^{\tau+2}}=\dots=F|_U.$$
This implies that
$$\operatorname{Ob}_X^{q}\subset\operatorname{Ob}_X^{p^{\tau+1}}\subset\operatorname{Ob}_X^{p^{\tau+2}}\subset \dots\subset F$$
by \ref{duality}. Hence, it suffices to show the existence of $C_3$ such that $F\cap (\mathfrak{n}^{nq}\cdot T_{X/k}^1)\subset (\mathfrak{n}^{n-C_3})^{[q]} \cdot \operatorname{Ob}_X^{q}$ for $n\ge C_3$. Choose $c$ as above so that $(\mathfrak{n}^{[q]})^{n-c}R\supset \mathfrak{n}^{nq}$ for every $n\ge c$.
By the Artin-Rees lemma, there exists $c_1\ge 0$ such that
$$F\cap ((\mathfrak{n}^{[q]})^n\cdot T_{X/k}^1)\subset (\mathfrak{n}^{[q]})^{n-c_1}\cdot F$$
if $n\ge c_1$. As the quotient sheaf $F/\operatorname{Ob}_X^q$ is supported at $\{\mathfrak{n}\}$, there exists $c_2\ge 0$ such that 
$$(\mathfrak{n}^{[q]})^{c_2}\cdot F\subset \operatorname{Ob}_X^q.$$
We set $C_3:=c+c_1+c_2$. Then, if $n\ge C_3$, we have
$$F\cap (\mathfrak{n}^{nq}\cdot T_{X/k}^1)\subset F\cap ((\mathfrak{n}^{[q]})^{n-c}\cdot T_{X/k}^1)\subset (\mathfrak{n}^{[q]})^{n-c-c_1}\cdot F\subset (\mathfrak{n}^{[q]})^{n-c-c_1-c_2} \cdot \operatorname{Ob}_X^{q}$$
and this completes the proof.
\end{proof}

\begin{lemma}\label{j lem}
    There exists a positive integer $C_4$ such that, for every $i=0,1,\dots,\tau-1$, we have $$\operatorname{Ob}_R^{p^i}\cap (\mathfrak{n}^{np^i}\cdot T_{R/k}^1)\subset (\mathfrak{n}^{n-C_4})^{[p^i]} \cdot \operatorname{Ob}_R^{p^i}\quad (\forall n\ge C_4).$$
\end{lemma}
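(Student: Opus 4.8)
The plan is to mimic the proof of Lemma \ref{i lem}, but to exploit the crucial simplification that here the index $i$ ranges over the \emph{finite} set $\{0,1,\dots,\tau-1\}$. Because of this, I will not need the auxiliary coherent sheaf $F$ that was used to handle the infinitely many indices $i\ge\tau$; instead, for each of the finitely many $i$ I can apply the Artin-Rees lemma directly to the module $\operatorname{Ob}_R^{p^i}$ itself, and then take the maximum of the resulting constants. The key structural input is Proposition \ref{Ob module}: for each $i$, the module $\operatorname{Ob}_R^{p^i}$ is a finitely generated $R^{p^i}$-submodule of $T_{R/k}^1$, and $T_{R/k}^1$ is itself finitely generated over $R^{p^i}$ since $R$ is finite over $R^{p^i}$ ($k$ being algebraically closed).

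First I would fix $i\in\{0,1,\dots,\tau-1\}$ and work entirely in the category of $R^{p^i}$-modules, using the ideal $\mathfrak{n}^{[p^i]}$ of $R^{p^i}$ and its adic filtration. Applying the Artin-Rees lemma to the inclusion $\operatorname{Ob}_R^{p^i}\subset T_{R/k}^1$ of finitely generated $R^{p^i}$-modules yields an integer $c_1^{(i)}\ge 0$ with
$$\operatorname{Ob}_R^{p^i}\cap\bigl((\mathfrak{n}^{[p^i]})^m\cdot T_{R/k}^1\bigr)\subset (\mathfrak{n}^{[p^i]})^{m-c_1^{(i)}}\cdot\operatorname{Ob}_R^{p^i}\qquad(m\ge c_1^{(i)}).$$
Next I would use the uniform constant $c$ from the remark preceding Lemma \ref{i lem}: from $(\mathfrak{n}^{[p^i]})^nR\supset\mathfrak{n}^{p^i(n+c)}$ one gets, after reindexing, the containment $\mathfrak{n}^{np^i}\subset(\mathfrak{n}^{[p^i]})^{n-c}R$ for $n\ge c$, hence $\mathfrak{n}^{np^i}\cdot T_{R/k}^1\subset(\mathfrak{n}^{[p^i]})^{n-c}\cdot T_{R/k}^1$. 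Importantly, $c$ does not depend on $i$.

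Combining these two facts, for $n\ge c+c_1^{(i)}$ I would set $m=n-c$ and obtain
$$\operatorname{Ob}_R^{p^i}\cap(\mathfrak{n}^{np^i}\cdot T_{R/k}^1)\subset\operatorname{Ob}_R^{p^i}\cap\bigl((\mathfrak{n}^{[p^i]})^{n-c}\cdot T_{R/k}^1\bigr)\subset(\mathfrak{n}^{[p^i]})^{n-c-c_1^{(i)}}\cdot\operatorname{Ob}_R^{p^i}=(\mathfrak{n}^{n-c-c_1^{(i)}})^{[p^i]}\cdot\operatorname{Ob}_R^{p^i},$$
where the last equality is the identity $(\mathfrak{n}^{[p^i]})^j=(\mathfrak{n}^j)^{[p^i]}$. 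Finally I would set $C_4:=c+\max_{0\le i<\tau}c_1^{(i)}$, which is legitimate precisely because the index set is finite. Since $n-C_4\le n-c-c_1^{(i)}$ gives $(\mathfrak{n}^{n-c-c_1^{(i)}})^{[p^i]}\subset(\mathfrak{n}^{n-C_4})^{[p^i]}$, the displayed inclusion upgrades to the asserted bound $\operatorname{Ob}_R^{p^i}\cap(\mathfrak{n}^{np^i}\cdot T_{R/k}^1)\subset(\mathfrak{n}^{n-C_4})^{[p^i]}\cdot\operatorname{Ob}_R^{p^i}$ uniformly in $i$ for all $n\ge C_4$. I do not anticipate a serious obstacle here; the only points requiring care are bookkeeping the two distinct module structures (over $R$ versus over $R^{p^i}$) and keeping the translation between the $\mathfrak{n}^{np^i}$-adic and $\mathfrak{n}^{[p^i]}$-adic filtrations consistent — exactly as in Lemma \ref{i lem}, but now without the sheaf-theoretic detour since finiteness of the index range lets the Noetherian Artin-Rees argument be applied term by term.
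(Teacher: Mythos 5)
Your proposal is correct and follows essentially the same route as the paper: fix $i$ in the finite range, use the uniform translation constant between the $\mathfrak{n}^{p^i n}$-adic and $(\mathfrak{n}^{[p^i]})^n$-adic filtrations, apply Artin--Rees to $\operatorname{Ob}_R^{p^i}\subset T_{R/k}^1$ as finitely generated $R^{p^i}$-modules, and take the maximum of the resulting constants over the finitely many $i$. Your explicit remarks on why the sheaf $F$ from Lemma \ref{i lem} is unnecessary here and on the finiteness of $T_{R/k}^1$ over $R^{p^i}$ are accurate elaborations of what the paper leaves implicit.
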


\begin{proof}
    It suffices to show the claim when $i$ is fixed. Take $c_3$ so that $(\mathfrak{n}^{[p^i]})^{n-c_3}R\supset \mathfrak{n}^{p^in}$ for every $n\ge c_3$. Applying the Artin-Rees lemma yields $c_4\ge 0$ such that
$$\operatorname{Ob}_R^{p^i}\cap ((\mathfrak{n}^{[p^i]})^n\cdot T_{R/k}^1)\subset (\mathfrak{n}^{[p^i]})^{n-c_4}\cdot \operatorname{Ob}_R^{p^i}$$
if $n\ge c_4$. Then we can take $C_4:=c_3+c_4$.
\end{proof}

\begin{definition}\label{seq def}
    Let $e_1, n_1,q_1$ be positive integers. Define three sequences of positive integers
    \begin{equation*}
         \left\{ \begin{alignedat}{2}a[e_1,n_1,q_1] &=(a[e_1,n_1,q_1]_n)_{n=1,2,\dots},\\
    \widetilde{a}[e_1,n_1,q_1] &=(\widetilde{a}[e_1,n_1,q_1]_n)_{n=1,2,\dots},\\
    b[e_1,n_1,q_1] &=(b[e_1,n_1,q_1]_n)_{n=1,2,\dots},\end{alignedat}\right .
    \end{equation*}
    by
    \begin{equation*}
   \left\{ \begin{alignedat}{2}a[e_1,n_1,q_1]_n& =e_1(q_1+1)\cdot \lceil \frac{nq_1}{n_1} \rceil,\\
    \widetilde{a}[e_1,n_1,q_1]_n& =e_1\cdot (\lfloor \frac{nq_1(q_1+1)}{n_1} \rfloor+1),\\
    b[e_1,n_1,q_1] &=\max\{a[e_1,n_1,q_1],\ \widetilde{a}[e_1,n_1,q_1]\}.\end{alignedat}\right .
    \end{equation*}
    Here, for a real number $x$, $\lceil x \rceil$ [resp.\ $\lfloor x \rfloor$] denotes the smallest integer greater than or equal to $x$ [resp. the greatest integer smaller than or equal to $x$].
    
\end{definition}

\begin{lemma}
    In the setting of \ref{seq def}, we have
    $$a[e_1,n_1,q_1]\ge a[e_1,n_1+1,q_1],$$
    $$a[e_1,n_1,q_1]\ge \widetilde{a}[e_1,n_1+1,q_1].$$
    In particular, $a[e_1,n_1,q_1]\ge b[e_1,n_1+1,q_1]\ge a[e_1,n_1+1,q_1]$.
\end{lemma}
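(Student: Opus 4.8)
The statement to prove is purely a monotonicity claim about the three explicitly-defined sequences
$a[e_1,n_1,q_1]$, $\widetilde a[e_1,n_1,q_1]$, $b[e_1,n_1,q_1]$ from Definition \ref{seq def},
namely that increasing the middle parameter $n_1$ by one decreases $a$ and bounds $\widetilde a$ below it.
Since $b$ is by definition the pointwise maximum of $a$ and $\widetilde a$, once I have the two displayed inequalities
the final sentence $a[e_1,n_1,q_1]\ge b[e_1,n_1+1,q_1]\ge a[e_1,n_1+1,q_1]$ follows immediately:
the right inequality is just $b=\max\{a,\widetilde a\}\ge a$, and the left is $\max\{a[e_1,n_1+1,q_1],\widetilde a[e_1,n_1+1,q_1]\}\le a[e_1,n_1,q_1]$,
which holds exactly because each of the two arguments of the max is $\le a[e_1,n_1,q_1]$ by the two displayed inequalities.
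So the whole lemma reduces to verifying the two displayed inequalities termwise in $n$.

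\textbf{First inequality.}
For each fixed $n$ I want $e_1(q_1+1)\lceil nq_1/n_1\rceil \ge e_1(q_1+1)\lceil nq_1/(n_1+1)\rceil$.
Cancelling the positive factor $e_1(q_1+1)$, this is just the elementary fact that $nq_1/n_1\ge nq_1/(n_1+1)$
(since enlarging a positive denominator shrinks a nonnegative fraction), together with monotonicity of the ceiling function:
if $x\ge y$ then $\lceil x\rceil\ge\lceil y\rceil$. That finishes the first inequality with no real obstacle.

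\textbf{Second inequality.}
Here I must compare $a[e_1,n_1,q_1]_n=e_1(q_1+1)\lceil nq_1/n_1\rceil$ with
$\widetilde a[e_1,n_1+1,q_1]_n=e_1(\lfloor nq_1(q_1+1)/(n_1+1)\rfloor+1)$.
After cancelling $e_1$, the target is
$(q_1+1)\lceil nq_1/n_1\rceil \ge \lfloor nq_1(q_1+1)/(n_1+1)\rfloor+1$.
The natural route is to bound the right side from above and the left side from below by the common real number $nq_1(q_1+1)/n_1$.
On the left, $\lceil nq_1/n_1\rceil\ge nq_1/n_1$, so $(q_1+1)\lceil nq_1/n_1\rceil\ge (q_1+1)nq_1/n_1$.
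On the right, $\lfloor nq_1(q_1+1)/(n_1+1)\rfloor+1 \le nq_1(q_1+1)/(n_1+1)+1 \le nq_1(q_1+1)/n_1 + (\text{something})$;
the point to check is that replacing $n_1+1$ by $n_1$ in the denominator already absorbs the extra $+1$ coming from the floor.
I expect this to be the only slightly delicate step: I must confirm that
$(q_1+1)nq_1/n_1 - nq_1(q_1+1)/(n_1+1) = nq_1(q_1+1)\big(\tfrac1{n_1}-\tfrac1{n_1+1}\big) = \dfrac{nq_1(q_1+1)}{n_1(n_1+1)} \ge 1$
is \emph{not} always true for small $n$, so a cruder bound is safer: since $\lceil nq_1/n_1\rceil\ge 1$ and $(q_1+1)\ge 2$, I can instead note $(q_1+1)\lceil nq_1/n_1\rceil\ge \lceil nq_1/n_1\rceil + q_1\lceil nq_1/n_1\rceil$, bound $q_1\lceil nq_1/n_1\rceil\ge nq_1^2/n_1$ and $\lceil nq_1/n_1\rceil\ge nq_1/n_1$ to recover $(q_1+1)nq_1/n_1$, and then compare with $nq_1(q_1+1)/(n_1+1)+1$ using that $\lceil nq_1/n_1\rceil$ contributes a genuine integer unit. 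The cleanest argument is to write $\lceil nq_1/n_1\rceil = \lfloor nq_1/n_1\rfloor + \delta$ with $\delta\in\{0,1\}$ and handle the integer bookkeeping directly, so that the ``$+1$'' on the right is matched by the ceiling rounding on the left rather than by the denominator gap. The main obstacle is thus purely this floor/ceiling accounting in the second inequality; everything else is immediate monotonicity.
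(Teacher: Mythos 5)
Your treatment of the first inequality and of the ``in particular'' reduction (using $b=\max\{a,\widetilde a\}$) is correct and matches the paper. The problem is the second inequality: you correctly diagnose that the naive real-number comparison fails because $\frac{nq_1(q_1+1)}{n_1(n_1+1)}$ need not be $\ge 1$, but you never actually close the gap. The mechanism you settle on --- that the ``$+1$'' on the right is supplied by the ceiling's rounding, via $\lceil nq_1/n_1\rceil=\lfloor nq_1/n_1\rfloor+\delta$ --- does not work in the case $\delta=0$, i.e.\ when $n_1$ divides $nq_1$. Concretely, take $n_1=4$, $n=4$, $q_1=1$: there is no rounding ($\lceil 4/4\rceil=1$), the denominator gap is only $8/20<1$, and yet the inequality $2\ge\lfloor 8/5\rfloor+1=2$ holds. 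Neither of the two sources of slack you identify accounts for this case, so your proposed bookkeeping would stall exactly there.

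The missing observation --- which is what the paper uses --- is integrality plus strictness: $(q_1+1)\lceil nq_1/n_1\rceil$ is an \emph{integer} satisfying
$$(q_1+1)\Bigl\lceil \tfrac{nq_1}{n_1}\Bigr\rceil\;\ge\;\Bigl\lceil \tfrac{nq_1(q_1+1)}{n_1}\Bigr\rceil\;\ge\;\tfrac{nq_1(q_1+1)}{n_1}\;>\;\tfrac{nq_1(q_1+1)}{n_1+1}\;\ge\;\Bigl\lfloor \tfrac{nq_1(q_1+1)}{n_1+1}\Bigr\rfloor,$$
where the strict inequality holds because $nq_1(q_1+1)>0$. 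An integer strictly greater than the integer $\lfloor y\rfloor$ is automatically $\ge\lfloor y\rfloor+1$; that is where the ``$+1$'' comes from, uniformly in all cases, with no case split on $\delta$ and no need for the denominator gap to exceed $1$. With that one line inserted, your argument is complete.
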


\begin{proof} The first inequality is clear.
    For every $n\ge 1$, we have
    $$(q_1+1)\lceil \frac{nq_1}{n_1} \rceil\ge \lceil \frac{nq_1(q_1+1)}{n_1} \rceil> \lfloor \frac{nq_1(q_1+1)}{n_1+1} \rfloor.$$
    Note that the last inequality is strict. Thus, we have $a[e_1,n_1,q_1]\ge \widetilde{a}[e_1,n_1+1,q_1].$ The last inequalities follow from the definition of $b[e_1,n_1,q_1]$.
\end{proof}

\begin{lemma}
    For any positive integers $e_1, n_1,q_1$, the sequences $a[e_1,n_1,q_1],$ $b[e_1,n_1,q_1]$ are subadditive.
\end{lemma}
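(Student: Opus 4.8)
The plan is to prove subadditivity of the two sequences separately, then deduce the claim using the fact that the maximum of subadditive sequences is subadditive (established in the earlier lemma in this section). Fix the parameters and write $a_n = a[e_1,n_1,q_1]_n = e_1(q_1+1)\lceil \tfrac{nq_1}{n_1}\rceil$ and $\widetilde a_n = \widetilde a[e_1,n_1,q_1]_n = e_1(\lfloor \tfrac{nq_1(q_1+1)}{n_1}\rfloor + 1)$. Since $b = \max\{a,\widetilde a\}$, once I show that both $a$ and $\widetilde a$ are subadditive, the earlier lemma gives subadditivity of $b$ immediately.

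First I would handle the sequence $a$. The key elementary fact is that the ceiling function is subadditive: for real $x,y$ one has $\lceil x+y\rceil \le \lceil x\rceil + \lceil y\rceil$. Applying this with $x = \tfrac{nq_1}{n_1}$ and $y = \tfrac{mq_1}{n_1}$ gives $\lceil \tfrac{(n+m)q_1}{n_1}\rceil \le \lceil \tfrac{nq_1}{n_1}\rceil + \lceil \tfrac{mq_1}{n_1}\rceil$, and multiplying through by the positive constant $e_1(q_1+1)$ yields $a_{n+m}\le a_n + a_m$. This is the easy half.

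The slightly more delicate part is the sequence $\widetilde a$, because of the additive constant $+1$ sitting inside. Here I would use that the floor function is superadditive, $\lfloor x\rfloor + \lfloor y\rfloor \le \lfloor x+y\rfloor$, which runs in the opposite direction and so does not by itself give what I want; the saving grace is the constant $1$. Concretely, writing $\alpha = \tfrac{q_1(q_1+1)}{n_1}$, I want $\lfloor (n+m)\alpha\rfloor + 1 \le (\lfloor n\alpha\rfloor+1)+(\lfloor m\alpha\rfloor+1)$, i.e. $\lfloor (n+m)\alpha\rfloor \le \lfloor n\alpha\rfloor + \lfloor m\alpha\rfloor + 1$. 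This follows from the standard estimate $\lfloor x+y\rfloor \le \lfloor x\rfloor + \lfloor y\rfloor + 1$ (equivalently, the fractional parts satisfy $\{x\}+\{y\} < 2$), applied with $x = n\alpha$, $y = m\alpha$. Multiplying by $e_1 > 0$ then gives $\widetilde a_{n+m}\le \widetilde a_n + \widetilde a_m$.

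The main obstacle, such as it is, is purely bookkeeping: keeping the direction of the floor/ceiling inequalities straight and making sure the $+1$ correction term is absorbed correctly rather than accumulating. Neither step uses the algebraic geometry of the preceding sections — these are self-contained inequalities for real numbers — so the only real care needed is to invoke the correct one-variable estimate (ceiling subadditive, floor subadditive up to an error of $1$) in each case. With subadditivity of both $a$ and $\widetilde a$ in hand, the conclusion for $b$ is immediate from the earlier lemma on $\max$ of subadditive sequences.
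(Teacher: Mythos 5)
Your proof is correct: the ceiling inequality $\lceil x+y\rceil\le\lceil x\rceil+\lceil y\rceil$ handles $a$, the estimate $\lfloor x+y\rfloor\le\lfloor x\rfloor+\lfloor y\rfloor+1$ with the error absorbed by the built-in $+1$ handles $\widetilde a$, and the earlier lemma on maxima of subadditive sequences then gives $b$. The paper states this lemma without proof as a routine verification, and your argument is exactly the standard one that fills it in.
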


\begin{definition}
    Let $D\in \HS_k^\infty(T)$ and $n\in \mathbb{N}$. Then we say that $D$ is {\em $I$-logarithmic up to $n$} if 
    $D_i(I)\subset I$ for $i=0,\dots, n$.
\end{definition}

\begin{definition}\label{l_n def}
    Define a sequence $l_1,l_2,\dots$ of positive integers by
    $$l_n=n/q$$
    if $n$ is divisible by $q$ and
    $$l_n=n'$$
    if $n$ factorizes as $n=n'p^i$ with $p\nmid n',\ i<\tau$. Note that $l_n$ tends to $\infty$ as $n\to \infty$ because $l_n\ge n/q$.
\end{definition}

\begin{proposition}\label{a to b extension prop}
    Let $e$ be an integer with
    $$e\ge C_1+C_3\cdot q+C_4\cdot q+q$$
    where $C_1,C_3,C_4$ are as in \ref{hom lem}-\ref{j lem}. Let $m$ be a positive integer and $D\in \HS_k^\infty(T)$ be such that:
    \begin{enumerate}
        \item $D$ is $I$-logarithmic up to $m$, thus induces $\widetilde{D}\in \HS_k^m(R),$
        \item $D$ is logarithmically $a[e,m,q]$-bounded.
    \end{enumerate}
    Then there exists $E\in \HS_k^\infty(T)$ such that:
     \begin{enumerate}
        \item $E$ is $I$-logarithmic up to $m$, thus induces $\widetilde{E}\in \HS_k^m(R),$
        \item $\operatorname{ob}_{m+1}(\widetilde{E})=0$,
        \item $E$ is logarithmically $b[e,m+1,q]$-bounded,
        \item $D$ and $E$ coincide up to $l_{m+1}-1$, i.e., $D_1=E_1,\dots,D_{l_{m+1}-1}=E_{l_{m+1}-1}$.
    \end{enumerate}
\end{proposition}

\begin{proof}
    Note that the composition
    $$T\overset{D_{m+1}}{\longrightarrow} T\twoheadrightarrow R$$
    restricts to an $R$-homomorphism
    $$f: I/I^2\to R$$
    as $D$ is $I$-logarithmic up to $m$. By Definition \ref{obstruction sp M,Ob},  the image of $f$ in $T^1_{R/k}$ is equal to $\operatorname{ob}_{m+1}(\widetilde{D})$. As $D$ is $a[e,m,q]$-bounded, $f$ factors as $I/I^2\to \mathfrak{n}^{a[e,m,q]_{m+1}}\hookrightarrow R.$ Note that $ a[e,m,q]_{m+1} \ge eq(q+1)+e(q+1)$. 
     By \ref{hom lem}, we have $$f\in \mathfrak{n}^{eq(q+1)+e(q+1)-C_1}\cdot \Hom_R(I/I^2,R).$$ In particular, $$f\in \mathfrak{n}^{eq(q+1)+eq+C_3\cdot q+C_4\cdot q+q}\cdot \Hom_R(I/I^2,R).$$
    This implies
    $$\operatorname{ob}_{m+1}(\widetilde{D})\in \mathfrak{n}^{eq(q+1)+eq+C_3\cdot q+C_4\cdot q+q}\cdot T^1_{R/k}.$$
    Let us write $m+1=m'p^i,\ p\nmid m'$. From this point on, we treat the case $i\ge \tau$ and the case $i<\tau$ separately.\\
    First, assume that $i\ge \tau$. 
    We can apply \ref{i lem} as $\operatorname{Ob}_R^{m+1}=\operatorname{Ob}_R^{p^i}$ by \ref{np=p}. We have
    $$\operatorname{ob}_{m+1}(\widetilde{D})\in (\mathfrak{n}^{e(q+1)+e+C_4 +1})^{[q]}\cdot  \operatorname{Ob}_R^{q}\subset (\mathfrak{n}^{e(q+1)+e+1})^{[q]}\cdot  \operatorname{Ob}_R^{q}.$$
    This means that there exist $\widetilde{D}^1,\dots,\widetilde{D}^n\in \HS_k^{q-1}(R)$ and $x_1,\dots,x_n\in \mathfrak{n}^{e(q+1)+e+1}$ such that 
    $$\operatorname{ob}_{m+1}(\widetilde{D})+x_1^q\cdot\operatorname{ob}_q(\widetilde{D}^1)+\dots+x_n^q\cdot \operatorname{ob}_q(\widetilde{D}^n)=0.$$
    The latter is equivalent to
    $$\operatorname{ob}_{m+1}(\widetilde{D})+\operatorname{ob}_{q}(x_1\bullet\widetilde{D}^1)+\dots + \operatorname{ob}_{q}(x_n\bullet\widetilde{D}^n)=0$$
    by \ref{obstruction independence}. We can lift $\widetilde{D}^1,\dots,\widetilde{D}^n\in \HS_k^{q-1}(R)$ to $$\widetilde{E}^1,\dots,\widetilde{E}^n\in \HS_k^\infty(T)$$ by formal smoothness of $T$ and $x_1,\dots,x_n\in \mathfrak{n}^{e(q+1)+e+1}$ to $$y_1,\dots,y_n\in \mathfrak{m}^{e(q+1)+e+1}.$$
    Note that, for every $j=1,\dots,n$, $F^j:=(y_j\bullet \widetilde{E}^j)[m'p^{i-\tau}]$ is logarithmically $b[e,m+1,q]$-bounded: To prove this, it suffices to consider $F^j_{sm'p^{i-\tau}}$ for each $s=1,2,\dots$ because other terms of $F^j$ are zero. The image of $F^j_{sm'p^{i-\tau}}$ is contained in $\mathfrak{m}^{se(q+1)+se+s}$. On the other hand,
    \begin{align*}
   b[e,m+1,q]_{sm'p^{i-\tau}}& =\max\{e(q+1)\cdot \lceil \frac{sm'p^{i-\tau}q}{m+1} \rceil, e\cdot (\lfloor \frac{sm'p^{i-\tau}q(q+1)}{m+1} \rfloor+1)\}\\
   &=\max\{ e(q+1)\cdot s,e\cdot (s(q+1)+1)\}\\
   &=es(q+1)+e\\
   &<se(q+1)+se+s.
    \end{align*}
    By \ref{a to log lem}, we see that $F^j$ is logarithmically $b[e,m+1,q]$-bounded. Then
    $$E:=D\circ F^1\circ\dots\circ F^n$$
    satisfies the required properties: We have just shown that $E$ is logarithmically $b[e,m+1,q]$-bounded. As each $\widetilde{E}^j$ is $I$-logarithmic up to $q-1$, each $F^j$ is $I$-logarithmic up to $(q-1)m'p^{i-\tau}+m'p^{i-\tau}-1=m$. Hence, also $E$ is $I$-logarithmic up to $m$ and induces
    $$\widetilde{E}=\widetilde{D}\circ (x_1\bullet \widetilde{D}^1)[m'p^{i-\tau}]\circ\dots \circ (x_n\bullet \widetilde{D}^n)[m'p^{i-\tau}] \in \HS^m_k(R).$$
    From this expression, we obtain $\operatorname{ob}_m(\widetilde{E})=0$. Finally, note that for each $j$, the first $m'p^{i-\tau}-1$ terms of $F^j$ are zero
    and note that $l_{m+1}=m'p^{i-\tau}$. This implies that $D$ and $E$ coincide up to $l_{m+1}-1$. This completes the proof in the case $i\ge \tau$.\\
    Next, suppose that $i<\tau $.
    As $$\operatorname{ob}_{m+1}(\widetilde{D})\in \mathfrak{n}^{eq(q+1)+eq+C_3\cdot q+C_4\cdot q+q}\cdot T^1_{R/k},$$ we have in particular
    $$\operatorname{ob}_{m+1}(\widetilde{D})\in \mathfrak{n}^{(e(q+1)p^{\tau-i}+ep^{\tau-i}+C_3\cdot p^{\tau-i}+C_4\cdot p^{\tau-i}+p^{\tau-i})p^i}\cdot T^1_{R/k}.$$
    We can apply \ref{j lem} as $\operatorname{Ob}_R^{m+1}=\operatorname{Ob}_R^{p^i}$ by \ref{np=p}. We have
    $$\operatorname{ob}_{m+1}(\widetilde{D})\in (\mathfrak{n}^{e(q+1)p^{\tau-i}+ep^{\tau-i}+C_3\cdot p^{\tau-i}+C_4\cdot (p^{\tau-i}-1)+p^{\tau-i}})^{[p^i]}\cdot  \operatorname{Ob}_R^{p^i}$$
    and in particular
    $$\operatorname{ob}_{m+1}(\widetilde{D})\in (\mathfrak{n}^{e(q+1)p^{\tau-i}+e+1})^{[p^i]}\cdot  \operatorname{Ob}_R^{p^i}.$$
    Then there exist $\widetilde{D}^1,\dots,\widetilde{D}^n\in \HS_k^{p^i-1}(R)$ and $x_1,\dots,x_n\in \mathfrak{n}^{e(q+1)p^{\tau-i}+e+1}$ such that 
    $$\operatorname{ob}_{m+1}(\widetilde{D})+\operatorname{ob}_{p^i}(x_1\bullet\widetilde{D}^1)+\dots + \operatorname{ob}_{p^i}(x_n\bullet\widetilde{D}^n)=0.$$ We can lift $\widetilde{D}^1,\dots,\widetilde{D}^n\in \HS_k^{p^i-1}(R)$ to $$\widetilde{E}^1,\dots,\widetilde{E}^n\in \HS_k^\infty(T)$$ by formal smoothness of $T$ and $x_1,\dots,x_n\in \mathfrak{n}^{e(q+1)p^{\tau-i}+e+1}$ to $$y_1,\dots,y_n\in \mathfrak{m}^{e(q+1)p^{\tau-i}+e+1}.$$
    Note that, for every $j=1,\dots,n$, $F^j:=(y_j\bullet \widetilde{E}^j)[m']$ is logarithmically $b[e,m+1,q]$-bounded: For each $s=1,2,\dots,$ the image of $F^j_{sm'}$ is contained in $\mathfrak{m}^{se(q+1)p^{\tau-i}+se+s}$. On the other hand,
    \begin{align*}
   b[e,m+1,q]_{sm'}& =\max\{ e(q+1)\cdot \lceil \frac{sm'q}{m+1} \rceil,e\cdot (\lfloor \frac{sm'q(q+1)}{m+1} \rfloor+1)\}\\
   &=\max\{ e(q+1)p^{\tau-i},e\cdot (s(q+1)p^{\tau-i}+1)\}\\
   &=es(q+1)p^{\tau-i}+e\\
   &<es(q+1)p^{\tau-i}+es+s.
    \end{align*}
    By \ref{a to log lem}, we see that $F^r$ is logarithmically $b[e,m+1,q]$-bounded. Then
    $$E:=D\circ F^1\circ\dots\circ F^n$$
    satisfies the required properties for the same reasons as before. This completes the proof in the case $i<\tau$.
\end{proof}

\begin{proposition}\label{b to a extension prop}
    Let $e$ be an integer with
    $$e\ge C_1+C_2+1$$
    where $C_1,C_2$ are as in \ref{hom lem}, \ref{G lem}. Let $m$ be a positive integer and $E\in \HS_k^\infty(T)$ such that:
         \begin{enumerate}
        \item $E$ is $I$-logarithmic up to $m$, thus induces $\widetilde{E}\in \HS_k^m(R),$
        \item $\operatorname{ob}_{m+1}(\widetilde{E})=0$,
        \item $E$ is logarithmically $b[e,m+1,q]$-bounded.
    \end{enumerate}
    Then there exists $D^+\in \HS_k^\infty (T)$ such that:
        \begin{enumerate}
        \item $D^+$ is $I$-logarithmic up to $m+1$,
        \item $D^+$ is logarithmically $a[e,m+1,q]$-bounded,
        \item $D^+$ and $E$ coincide up to $m$, i.e., $D^+_1=E_1,\dots,D^+_{m}=E_{m}$.
    \end{enumerate}
\end{proposition}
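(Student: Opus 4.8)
The plan is to modify $E$ in a single degree so as to make it $I$-logarithmic one step further, while keeping the lower terms fixed and improving the bound from the coarse $b$-sequence to the sharper $a$-sequence. Since $\operatorname{ob}_{m+1}(\widetilde E)=0$, by Proposition \ref{obstruction independence} the truncation $\widetilde E\in\HS_k^m(R)$ extends to some element of $\HS_k^{m+1}(R)$; equivalently, there is a way to correct $E$ in degree $m+1$ so that the corrected derivation is $I$-logarithmic up to $m+1$. Concretely I would unwind Definition \ref{obstruction sp M,Ob}: the composite $T\xrightarrow{E_{m+1}}T\twoheadrightarrow R$ gives a map $g\colon I/I^2\to R$ whose image in $T^1_{R/k}$ is $\operatorname{ob}_{m+1}(\widetilde E)=0$, so $g$ lies in the submodule $G=\operatorname{im}(\Hom_R(\Omega_{T/k}\otimes_T R,R)\to H)$ from Lemma \ref{G lem}. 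Thus $g$ is represented by a $T$-derivation-type correction, and subtracting a suitable degree-$(m+1)$ term (a $k$-linear map $T\to T$ of the form $\delta$ composed appropriately, lifted by formal smoothness of $T$) from $E_{m+1}$ produces $D^+$ with $D^+_{m+1}(I)\subset I$ and $D^+_i=E_i$ for $i\le m$. This immediately gives properties (i) and (iii).

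The substance is property (ii), the $a[e,m+1,q]$-bound. The point is that the sequences are arranged so that $b[e,m+1,q]$ and $a[e,m+1,q]$ agree except at the degrees that are multiples of $m+1$, where $b$ may exceed $a$; but in fact the relevant comparison is that $a[e,m,q]\ge b[e,m+1,q]$ is NOT available here, so instead one quantifies how far $g\in G$ can be pushed into a power of $\mathfrak n$. I would argue as follows. Since $E$ is logarithmically $b[e,m+1,q]$-bounded, $g$ factors through $\mathfrak n^{b[e,m+1,q]_{m+1}}$, and by the definition of $b$ and the hypothesis $e\ge C_1+C_2+1$ one computes $b[e,m+1,q]_{m+1}$ explicitly and checks it is at least $a[e,m+1,q]_{m+1}+C_1+C_2$. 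Then $g\in G\cap(\mathfrak n^{b[e,m+1,q]_{m+1}}\cdot H)$, and combining Lemma \ref{hom lem} (to pull the intersection $H_n=H\cap(\mathfrak n^n)^{e_2}$ back) with Lemma \ref{G lem} (to descend into $\mathfrak n^{n-C_2}\cdot G$), I obtain that $g$ can be realized by an element of $\Hom_R(\Omega_{T/k}\otimes_T R,R)$ whose values lie in $\mathfrak n^{a[e,m+1,q]_{m+1}}$. Lifting this through formal smoothness gives a correction term whose degree-$(m+1)$ component maps $T$ into $\mathfrak m^{a[e,m+1,q]_{m+1}}$, which is precisely the bound needed so that $D^+_{m+1}(R)\subset\mathfrak m^{a[e,m+1,q]_{m+1}}$.

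With the single corrected term controlled, property (ii) follows by invoking Lemma \ref{a to log lem}: I would check that $D^+$ is $\widetilde a$-bounded (so logarithmically $a$-bounded) degree by degree, using that in degrees $\le m$ it equals $E$ (already $b$-bounded, and one checks $b[e,m+1,q]_n\le a[e,m+1,q]_n$ in the relevant range via the preceding monotonicity lemma $a[e_1,n_1,q_1]\ge b[e_1,n_1+1,q_1]$), and in degree $m+1$ it satisfies the freshly obtained $\mathfrak m^{a[e,m+1,q]_{m+1}}$-bound. The main obstacle I anticipate is the bookkeeping in the second paragraph: one must verify that the constant $C_1+C_2+1$ absorbing the Artin–Rees losses from Lemmas \ref{hom lem} and \ref{G lem} is exactly what is needed to land in $a[e,m+1,q]_{m+1}$ rather than merely some large power of $\mathfrak n$, and that the correction in degree $m+1$ does not disturb $I$-logarithmicity in lower degrees — this is where the interplay between the module-level Artin–Rees estimates and the explicit combinatorial definition of the $a$- and $b$-sequences must be reconciled carefully.
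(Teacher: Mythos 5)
Your proposal follows essentially the same route as the paper: the vanishing of $\operatorname{ob}_{m+1}(\widetilde E)$ places $f$ in $G$, the explicit value $b[e,m+1,q]_{m+1}=eq(q+1)+e$ combined with Lemmas \ref{hom lem} and \ref{G lem} and the hypothesis $e\ge C_1+C_2+1$ puts $f$ in $\mathfrak n^{eq(q+1)+1}\cdot G$, and the degree-$(m+1)$ correction is realized by composing $E$ with $(x_j\bullet\widetilde E^j)[m+1]$ for derivations of $T$ lifted to infinite length by formal smoothness, exactly as in the paper. One small slip worth fixing: to deduce $a[e,m+1,q]$-boundedness of $D^+$ in degrees $\le m$ from the $b[e,m+1,q]$-boundedness of $E$ you need $b[e,m+1,q]_n\ge a[e,m+1,q]_n$ (which is immediate from $b=\max\{a,\widetilde a\}$), not the reversed inequality you wrote, and the extra ``$+1$'' in $\mathfrak n^{eq(q+1)+1}$ is what feeds Lemma \ref{a to log lem}.
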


\begin{proof}
     Note that the composition
    $$T\overset{E_{m+1}}{\longrightarrow} T\to R$$
    restricts to an $R$-homomorphism
    $$f: I/I^2\to R.$$
    By definition, the image of $f$ in $T^1_{R/k}$ is equal to $\operatorname{ob}_{m+1}(\widetilde{E})$, which is $0$ by assumption. Thus, $f$ is in the image $G$ of
    $$\Hom_R(\Omega_{T/k}\otimes_T R,R)\to \Hom_R (I/I^2,R).$$
    As $E$ is $b[e,m+1,q]$-bounded, $f$ factors as $I/I^2\to \mathfrak{n}^{b[e,m+1,q]_{m+1}}\hookrightarrow R.$ Note that 
    $$ b[e,m+1,q]_{m+1} =\max\{e(q+1)\cdot \lceil \frac{(m+1)q}{m+1} \rceil,  e\cdot (\lfloor \frac{(m+1)q(q+1)}{m+1} \rfloor+1) \}=eq(q+1)+e.$$
    By \ref{hom lem}, we have
    $$f\in \mathfrak{n}^{eq(q+1)+e - C_1}\cdot \Hom_R (I/I^2,R).$$
    By \ref{G lem}, we have
    $$f\in \mathfrak{n}^{eq(q+1)+e - C_1-C_2}\cdot G\subset \mathfrak{n}^{eq(q+1)+1}G.$$
    This means that we can take $g\in \mathfrak{n}^{eq(q+1)+1}\Hom_R(\Omega_{T/k}\otimes_T R,R) $ that restricts to $f$. As $T$ is formally smooth over $k$, we can take $x_1,\dots,x_n\in \mathfrak{m}^{eq(q+1)+1}$ and $d_1,\dots,d_n\in \operatorname{Der}_k(T,T)$ such that
    $x_1d_1+\dots +x_nd_n$
    restricts to $-g$. Again as $T$ is formally smooth over $k$, $(\operatorname{id}, d_j)\in \HS_k^1(T)$ lifts to $\widetilde{E}^j\in \HS_k^\infty (T)$ for every $j$. Set $F^j:=(x_j\bullet \widetilde{E}^j)[m+1]$. Then $F^j$ is logarithmically $a[e,m+1,q]$-bounded: For each $s=1,2,\dots,$ the image of $F^j_{s(m+1)}$ is contained in $\mathfrak{m}^{seq(q+1)+s}$. On the other hand,
    $$a[e,m+1,q]_{s(m+1)}=e(q+1)\cdot \lceil \frac{s(m+1)q}{m+1} \rceil=seq(q+1)< seq(q+1)+s.$$
    By \ref{a to log lem}, we see that $F^j$ is logarithmically $a[e,m+1,q]$-bounded. Then
    $$D^+=E\circ F^1\circ \dots \circ F^n$$
    satisfy the required properties: (ii) and (iii) follow from the construction. For (i), we note that
    $$D^+_{m+1}=E_{m+1}+x_1d_1+\dots+x_nd_n.$$
    This induces $f-f=0\in \operatorname{Hom}_R(I/I^2,R)$ so that $D^+$ is $I$-logarithmic up to $m+1$.
    This completes the proof.
    \end{proof}

\begin{proposition}
    Let $e$ be an integer with
    $$  e\ge C_1+ C_2 + C_3\cdot q+C_4\cdot q+q$$
    where the $C_i$ are as in \ref{hom lem}-\ref{j lem}. Let $m$ be a positive integer and $D\in \HS_k^\infty(T)$ such that:
    \begin{enumerate}
        \item $D$ is $I$-logarithmic up to $m$,
        \item $D$ is logarithmically $a[e,m,q]$-bounded.
    \end{enumerate}    
    Then there exists $D^+\in \HS_k^\infty (T)$ such that:
    \begin{enumerate}
        \item $D^+$ is $I$-logarithmic up to $m+1$,
        \item $D^+$ is logarithmically $a[e,m+1,q]$-bounded,
        \item $D^+$ and $D$ coincide up to $l_{m+1}-1$, i.e., $D^+_1=D_1,\dots,D^+_{l_{m+1}-1}=D_{l_{m+1}-1}$.
        \end{enumerate}
\end{proposition}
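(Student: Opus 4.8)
The plan is to obtain $D^+$ simply by chaining Propositions \ref{a to b extension prop} and \ref{b to a extension prop}; indeed, this combined statement is exactly the composite for which those two results were engineered. The first proposition trades an $a[e,m,q]$-bounded derivation (whose order-$(m+1)$ obstruction need not vanish) for a $b[e,m+1,q]$-bounded one whose obstruction does vanish, and the second converts such an obstruction-free, $b$-bounded derivation back into an $a[e,m+1,q]$-bounded one that is now $I$-logarithmic one step further. So the whole task is to verify that the hypotheses line up and that the two coincidence ranges splice together.

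First I would check that the bound $e \ge C_1 + C_2 + C_3 q + C_4 q + q$ is strong enough to invoke both intermediate propositions with this same $e$. Since $C_2$ is a positive integer, we have $e \ge C_1 + C_3 q + C_4 q + q$, which is precisely the bound required in \ref{a to b extension prop}; and since $C_3 q + C_4 q + q \ge 1$, we also have $e \ge C_1 + C_2 + 1$, the bound required in \ref{b to a extension prop}. Next I apply \ref{a to b extension prop} to the given $D$, producing $E \in \HS_k^\infty(T)$ that is $I$-logarithmic up to $m$, satisfies $\operatorname{ob}_{m+1}(\widetilde{E}) = 0$, is logarithmically $b[e,m+1,q]$-bounded, and coincides with $D$ up to $l_{m+1}-1$. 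This $E$ meets verbatim the three hypotheses of \ref{b to a extension prop}, so that proposition yields $D^+ \in \HS_k^\infty(T)$ that is $I$-logarithmic up to $m+1$, logarithmically $a[e,m+1,q]$-bounded, and coincides with $E$ up to $m$. Conclusions (i) and (ii) are then immediate.

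The one point needing care is the coincidence statement (iii). The first step gives $D_j = E_j$ for $1 \le j \le l_{m+1}-1$, and the second gives $E_j = D^+_j$ for $1 \le j \le m$. To conclude $D_j = D^+_j$ for $1 \le j \le l_{m+1}-1$, I need the ranges to nest, i.e.\ $l_{m+1}-1 \le m$, equivalently $l_{m+1} \le m+1$. By Definition \ref{l_n def}, $l_n = n/q \le n$ when $q \mid n$ (since $q = p^{\tau} \ge 2$) and $l_n = n' \le n$ otherwise, so $l_{m+1} \le m+1$ holds in every case. Hence $D_j = E_j = D^+_j$ for each such $j$, which is (iii). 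I do not anticipate any genuine obstacle: the substantive analytic work—the Artin--Rees estimates of Lemmas \ref{hom lem}--\ref{j lem} and the explicit bound computations—has already been absorbed into the two preceding propositions, and the present statement is their formal composition.
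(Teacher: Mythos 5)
Your proof is correct and matches the paper's: the paper also obtains $D^+$ by composing Propositions \ref{a to b extension prop} and \ref{b to a extension prop} (its proof is a one-line citation of exactly those two results). Your extra verifications—that the single bound on $e$ implies both hypotheses and that $l_{m+1}-1\le m$ so the coincidence ranges nest—are accurate and simply make explicit what the paper leaves implicit.
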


\begin{proof}
    This is a consequence of Proposition \ref{a to b extension prop}, \ref{b to a extension prop}.
\end{proof}

\begin{theorem}\label{eq finie thm}
        Let $e$ be an integer with
    $$  e\ge C_1+ C_2 + C_3\cdot q+C_4\cdot q+q$$ where the $C_i$ are as in \ref{hom lem}-\ref{j lem}. Let $\operatorname{Der}_k^{q}(R)$ be the module of $q$-integrable derivations of $R$. Then any element of
    $\mathfrak{n}^{e(q+1)+1} \cdot \operatorname{Der}_k^{q}(R)$
    is $m$-integrable for any positive integer $m$. In particular, the set of leaps produced by $\operatorname{Der}_k^{q}(R)$ is finite.
\end{theorem}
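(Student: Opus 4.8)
The plan is to fix $d\in\mathfrak{n}^{e(q+1)+1}\cdot\operatorname{Der}_k^{q}(R)$ and, by iterating the combined extension step of Propositions \ref{a to b extension prop} and \ref{b to a extension prop}, to construct a system $(D^{(m)})_{m\ge q}$ of Hasse--Schmidt derivations on $T$ whose induced derivations $\widetilde{D^{(m)}}\in\HS_k^m(R)$ all have first term $d$; this exhibits $d$ as $m$-integrable for every $m\ge q$, while for $m\le q$ the $m$-integrability is automatic from $d\in\operatorname{Der}_k^{q}(R)\subset\operatorname{Der}_k^m(R)$. The whole construction is arranged so that the degree-one term never changes once $m\ge q$.

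The main work is to produce the initial object $D^{(q)}$, which must be $I$-logarithmic up to $q$, logarithmically $a[e,q,q]$-bounded, and induce a derivation on $R$ with first term $d$. I would write $d=\sum_j z_j d_j$ with $z_j\in\mathfrak{n}^{e(q+1)+1}$ and $d_j\in\operatorname{Der}_k^{q}(R)$, and pick $\widetilde D^j\in\HS_k^{q}(R)$ with first term $d_j$. Since $\widetilde D^j$ genuinely extends to length $q$, its truncations are unobstructed in each degree $\le q$ (Proposition \ref{obstruction independence}), so by formal smoothness of $T$ one lifts $\widetilde D^j$ term by term to an $E^j\in\HS_k^\infty(T)$ that is $I$-logarithmic up to $q$ and reduces to $\widetilde D^j$. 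Lifting $z_j$ to $y_j\in\mathfrak{m}^{e(q+1)+1}$, I set $D^{(q)}:=(y_1\bullet E^1)\circ\cdots\circ(y_r\bullet E^r)$. The point of the twist $y_j\bullet$ is that it multiplies the $n$-th term by $y_j^n\in\mathfrak{m}^{n(e(q+1)+1)}$, forcing $(y_j\bullet E^j)_n(T)\subset\mathfrak{m}^{n(e(q+1)+1)}\subset\mathfrak{m}^{a[e,q,q]_n+1}$ (here $a[e,q,q]_n=ne(q+1)$); by Lemma \ref{a to log lem} each factor is logarithmically $a[e,q,q]$-bounded, and the bound passes to the composite since $a[e,q,q]$ is subadditive. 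The $I$-logarithmic property up to $q$ survives $y_j\bullet$ and composition, while the first term of $D^{(q)}$ reduces mod $I$ to $\sum_j z_j d_j=d$. I expect this to be the crux: one must reconcile the $I$-logarithmic requirement, available only because the obstructions below $q$ vanish, with the boundedness, which is supplied entirely by the high $\mathfrak{m}$-adic divisibility of the $y_j$ and is independent of how the unbounded lifts $E^j$ were chosen.

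With $D^{(q)}$ in hand I would apply the combined extension step repeatedly: for each $m\ge q$ it turns $D^{(m)}$ into a $D^{(m+1)}$ with the same properties at level $m+1$, agreeing with $D^{(m)}$ up to degree $l_{m+1}-1$. The decisive bookkeeping is that, by Definition \ref{l_n def}, $l_n=1$ holds exactly for $n=p^i$ with $1\le i\le\tau$, all of which satisfy $n\le q$; hence $l_{m+1}\ge 2$ for every $m\ge q$, so $D^{(m+1)}_1=D^{(m)}_1$ and, inductively, $\widetilde{D^{(m)}}_1=d$ throughout. This yields the $m$-integrability of $d$ for all $m$ (and, since the terms in each fixed degree stabilise as $l_{m+1}\to\infty$, even a well-defined infinite-length limit, i.e.\ $\infty$-integrability).

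For the final assertion, $\operatorname{Der}_k^{q}(R)$ is a finitely generated $R$-module and $R/\mathfrak{n}^{e(q+1)+1}$ is Artinian, so $V:=\operatorname{Der}_k^{q}(R)/\bigl(\mathfrak{n}^{e(q+1)+1}\cdot\operatorname{Der}_k^{q}(R)\bigr)$ has finite length. We have shown $\mathfrak{n}^{e(q+1)+1}\cdot\operatorname{Der}_k^{q}(R)\subset\operatorname{Der}_k^m(R)$ for every $m$; hence for $m>q$, where $\operatorname{Der}_k^{q}(R)\cap\operatorname{Der}_k^{m-1}(R)=\operatorname{Der}_k^{m-1}(R)$, a leap of $\operatorname{Der}_k^{q}(R)$ at $m$ corresponds to a strict inclusion between the images of $\operatorname{Der}_k^{m-1}(R)$ and $\operatorname{Der}_k^m(R)$ in $V$. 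Finite length of $V$ allows only finitely many such strict inclusions, and the leaps at $m\le q$ are finite in number, so $\operatorname{Leaps}(\operatorname{Der}_k^{q}(R))$ is finite.
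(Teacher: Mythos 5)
Your proposal is correct and follows essentially the same route as the paper: twist a lift of a length-$q$ Hasse--Schmidt extension by elements of $\mathfrak{m}^{e(q+1)+1}$ to obtain logarithmic $a[e,q,q]$-boundedness (via Lemma \ref{a to log lem}, since $a[e,q,q]_n=ne(q+1)$), then iterate the combined extension step of Propositions \ref{a to b extension prop} and \ref{b to a extension prop}, using $l_{m+1}\ge 2$ for $m\ge q$ to keep the degree-one term fixed. The only cosmetic differences are that you treat a general element $\sum_j z_j d_j$ by composing the twisted lifts rather than reducing to a single product $x\cdot d$ via the $R$-module structure of $\operatorname{Der}_k^m(R)$, and you derive the finiteness of leaps by a direct descending-chain argument in the finite-length module $V$ instead of invoking Theorem \ref{Her thm}; both variants are sound.
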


\begin{proof}
    Suppose that $d\in \operatorname{Der}_k^{q}(R)$ and  $x\in \mathfrak{n}^{e(q+1)+1} $. By assumption, there exists $\widetilde{E}\in \HS_k^q(R)$ such that $\widetilde{E}_1=d$. We can lift $\widetilde{E}$ to $E\in \HS_k^\infty(T)$ by formal smoothness of $T$. Let $y\in \mathfrak{m}^{e(q+1)+1}$ be a preimage of $x$. Then $D:=y \bullet E$ is $I$-logarithmic up to $q$ and logarithmically $a[e,q,q]$-bounded (note that $a[e,q,q]=(e(q+1),2e(q+1),3e(q+1),\dots)$). Let $m$ be an arbitrary positive integer. Applying the above proposition recursively, we obtain $D^+\in \HS_k^\infty (T)$ such that $D^+$ is $I$-logarithmic up to $m$ and $D$ and $D^+$ coincide up to $\min\{l_{q+1}-1,l_{q+2}-1,\dots, l_{m}-1\}.$ Note that $l_{n}\ge 2$ for any $n>q$. In particular, $y\cdot E_1=D^+_1$. This means that $x\cdot d$ is $m$-integrable. This shows the first part of the theorem. The second part is a consequence of the following theorem.
    \end{proof}

The next theorem is a slight variant of \cite[Theorem 3.1]{bravo2024finitenessleapssensehasseschmidt}, \cite[Proposition 1.11]{NARVAEZMACARRO2024109441} and its proof is essentially the same. For the convenience of the reader, we include the proof here as well.

    \begin{theorem}\label{Her thm}
        Let $M$ be a sub-$R$-module of $\operatorname{Der}_k(R,R)$ and $\mathfrak{l}\subset \mathfrak{n}$ and ideal such that $R/\mathfrak{l}$ is of finite length. Assume that any $d\in \mathfrak{l}\cdot M$ is $m$-integrable for any positive integer $m$. Let $\operatorname{Leaps}(M)\subset \mathbb{N}$ be the set of leaps produced by $M$. Then $$\# \operatorname{Leaps}(M)\le \operatorname{length}_R(M/ \mathfrak{l}M).$$ In particular, $\operatorname{Leaps}(M)$ is finite.
    \end{theorem}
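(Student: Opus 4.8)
The plan is to bound the number of leaps by counting the successive quotients of the descending filtration of $M$ cut out by the $m$-integrability filtration. Since leaps occur only at powers of $p$ by Theorem \ref{p^n leap}, and since that same theorem forces the equalities $\operatorname{Der}_k^{p^{i-1}}(R)=\operatorname{Der}_k^{p^{i-1}+1}(R)=\dots=\operatorname{Der}_k^{p^i-1}(R)$ (no power of $p$ lies strictly between $p^{i-1}$ and $p^i$), I would set $N_0:=M$ and $N_i:=M\cap \operatorname{Der}_k^{p^i}(R)$ for $i\ge 1$, obtaining a descending chain $M=N_0\supseteq N_1\supseteq N_2\supseteq\dots$ of $R$-submodules. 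Translating the definition of a leap, an element $d\in M$ leaps at $p^i$ exactly when $d\in \operatorname{Der}_k^{p^i-1}(R)\setminus \operatorname{Der}_k^{p^i}(R)=\operatorname{Der}_k^{p^{i-1}}(R)\setminus \operatorname{Der}_k^{p^i}(R)$, so that
$$ p^i\in \operatorname{Leaps}(M)\iff N_{i-1}\supsetneq N_i, \qquad \text{hence}\qquad \#\operatorname{Leaps}(M)=\#\{\,i\ge 1: N_{i-1}\supsetneq N_i\,\}. $$

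Next I would use the hypothesis to trap the entire chain between $M$ and $\mathfrak{l}M$. Every $d\in \mathfrak{l}M$ is $m$-integrable for all $m$, hence lies in $\operatorname{Der}_k^{p^i}(R)$ for every $i$; as $d$ also lies in $M$, this gives $\mathfrak{l}M\subseteq N_i$ for all $i$. Consequently each $M/N_i$ is a quotient of $M/\mathfrak{l}M$, so
$$ \operatorname{length}_R(M/N_i)\le \operatorname{length}_R(M/\mathfrak{l}M). $$

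The counting step is then elementary. For any integer $I\ge 1$, additivity of length along the chain $N_0\supseteq\dots\supseteq N_I$ gives $\operatorname{length}_R(N_0/N_I)=\sum_{i=1}^{I}\operatorname{length}_R(N_{i-1}/N_i)$, and each strict inclusion contributes at least $1$ to the right-hand side; therefore $\#\{1\le i\le I: N_{i-1}\supsetneq N_i\}\le \operatorname{length}_R(N_0/N_I)\le \operatorname{length}_R(M/\mathfrak{l}M)$. Since this bound is uniform in $I$, letting $I\to\infty$ yields $\#\operatorname{Leaps}(M)\le \operatorname{length}_R(M/\mathfrak{l}M)$. For the final assertion I would observe that $M$ is finitely generated, being a submodule of the finitely generated $R$-module $\operatorname{Der}_k(R,R)$ over the Noetherian ring $R$; since $R/\mathfrak{l}$ is Artinian, $M/\mathfrak{l}M$ is a finitely generated $R/\mathfrak{l}$-module and hence of finite length, so $\operatorname{Leaps}(M)$ is finite.

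I do not expect a serious obstacle here: once Theorem \ref{p^n leap} is invoked the argument is essentially a bookkeeping with lengths. The only points requiring a little care are the identification $\operatorname{Der}_k^{p^i-1}(R)=\operatorname{Der}_k^{p^{i-1}}(R)$, which is what lets me pass from ``leap at $p^i$'' to the strict inclusion $N_{i-1}\supsetneq N_i$, and the finiteness of $\operatorname{length}_R(M/\mathfrak{l}M)$, both of which are routine. This matches the remark that the proof is essentially the same as in \cite{bravo2024finitenessleapssensehasseschmidt} and \cite{NARVAEZMACARRO2024109441}.
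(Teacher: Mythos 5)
Your proof is correct, but it takes a genuinely different route from the paper's. You filter $M$ by the $R$-submodules $N_i = M\cap\operatorname{Der}_k^{p^i}(R)$, identify the leaps produced by $M$ with the strict drops of this chain (using Theorem \ref{p^n leap} both to restrict attention to the indices $p^i$ and to get $\operatorname{Der}_k^{p^i-1}(R)=\operatorname{Der}_k^{p^{i-1}}(R)$), trap the whole chain above $\mathfrak{l}M$ via the integrability hypothesis, and count the strict drops by additivity of length. The paper instead chooses, for each leap $s$, a derivation $d_s\in M$ leaping at $s$ and proves that the classes $\overline{d}_s$ are linearly independent in $M/\mathfrak{l}M$ over a coefficient field $K\simeq R/\mathfrak{n}$ inside $R/\mathfrak{l}$ (whose existence uses perfectness of $k$); a putative dependence relation is refuted by an explicit composition $b_1^{-1}\bullet\bigl(((-b_2)\bullet D_2)\circ\cdots\circ((-b_n)\bullet D_n)\circ D\bigr)$ in $\HS_k^{s_2}(R)$ exhibiting $d_{s_1}$ as $s_2$-integrable. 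Your argument is the more elementary of the two: it needs only that the $\operatorname{Der}_k^m(R)$ are $R$-submodules plus length bookkeeping, with no coefficient field and no manipulation inside the groups $\HS_k^m(R)$ (indeed, the same counting works with $N_m=M\cap\operatorname{Der}_k^m(R)$ for all $m$, so Theorem \ref{p^n leap} is only organizational). The paper's argument, conversely, does not invoke Theorem \ref{p^n leap} at all and is the one that transports verbatim from the cited curve papers. Both yield exactly the bound $\#\operatorname{Leaps}(M)\le\operatorname{length}_R(M/\mathfrak{l}M)$, and your explicit verification that this length is finite (from $M$ being a submodule of the finitely generated module $\operatorname{Der}_k(R,R)$ over Noetherian $R$, with $R/\mathfrak{l}$ Artinian) supplies a point the paper leaves implicit in deducing the final finiteness assertion.
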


    \begin{proof}
        As $k$ is perfect, there exists a subfield $K\subset R/\mathfrak{l}$ containing $k$ such that $K\simeq R/\mathfrak{n}$. The length of $M/ \mathfrak{l} M$ is equal to the $K$-linear dimension of $M/ \mathfrak{l}M$.

        Now, let $L:=\operatorname{Leaps}(M)$. For each $s\in L$, we take $d_s\in M$ leaping at $s$. We claim that the set $\{\overline{d}_s\}_{s\in L}$ is $K$-linearly independent in $M/ \mathfrak{l}M$, where $\overline{d}_s$ is the image of $d_s$ in $M/ \mathfrak{l} M$.
        Suppose that there exists a relation such that
        $$a_1\overline{d}_{s_1}+\dots +a_n\overline{d}_{s_n}=0,\ a_i\in K$$
        where $a_1\neq 0$ and $s_1<\dots < s_n$. Then we have a relation in $M$ such that
        $$b_1d_{s_1}=-b_2d_{s_2}-\dots -b_nd_{s_n}+d,\ b_i\in R$$
        where $b_i$ are lifts of $a_i$ to $R$ and where $d\in \mathfrak{l}M$. Note that $b_1$ is invertible, so that 
        $$d_{s_1}=b_1^{-1}(-b_2d_{s_2}-\dots -b_nd_{s_n}+d).$$
        By assumption, there exists $D_i\in \HS_k^{s_i}(R)$ whose first component is equal to $d_{s_i}$ for each $i$. In addition, there exists $D\in \HS_k^{t}(R)$ whose first component is equal to $d$ with $t\ge s_2$. Then the first component of
        $$b_1^{-1}\bullet (((-b_2)\bullet D_2)\circ \dots \circ ((-b_n)\bullet D_n)\circ D)\in \HS_k^{s_2}(R)$$
        is equal to $d_{s_1}$. In the above expression, $D_i$ and $D$ are understood to be truncated to  length $s_2$. This implies that $d_{s_1}$ is at least $s_2$-integrable, which contradicts the choice of $d_{s_1}$. This proves the claim, and the theorem follows from it.
    \end{proof}

\begin{theorem}\label{sec 4 main thm}
    The set of leaps of $\operatorname{Der}_k(R,R)$ is finite. In particular, $R$ satisfies $\FL{\theta}$ for a sufficiently large integer $\theta$.
\end{theorem}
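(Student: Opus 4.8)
The plan is to combine Theorem \ref{eq finie thm} with Theorem \ref{Her thm} by choosing the ideal $\mathfrak{l}$ appropriately. First I would fix an integer $e \ge C_1 + C_2 + C_3 \cdot q + C_4 \cdot q + q$ so that Theorem \ref{eq finie thm} applies, and set $\mathfrak{l} := \mathfrak{n}^{e(q+1)+1}$. Theorem \ref{eq finie thm} tells us precisely that every element of $\mathfrak{l} \cdot \operatorname{Der}_k^{q}(R)$ is $m$-integrable for all $m$, i.e.\ produces no leaps. However, Theorem \ref{Her thm} is stated for a submodule $M$ together with the hypothesis that every element of $\mathfrak{l} \cdot M$ is $m$-integrable, so the clean application is with $M = \operatorname{Der}_k^{q}(R)$ rather than with $M = \operatorname{Der}_k(R,R)$ directly.

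Applying Theorem \ref{Her thm} with this $M = \operatorname{Der}_k^{q}(R)$ and $\mathfrak{l} = \mathfrak{n}^{e(q+1)+1}$ yields that $\operatorname{Leaps}(\operatorname{Der}_k^{q}(R))$ is finite, bounded by $\operatorname{length}_R(M/\mathfrak{l}M)$; this quantity is finite since $R/\mathfrak{l}$ has finite length (as $R$ is local essentially of finite type, $\mathfrak{n}$ is the maximal ideal and $R/\mathfrak{n}^N$ is Artinian for every $N$). To pass from the leaps of $\operatorname{Der}_k^{q}(R)$ to the leaps of the full module $\operatorname{Der}_k(R,R)$, I would invoke Theorem \ref{p^n leap}: leaps only occur at powers of $p$. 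The descending chain $\operatorname{Der}_k(R) \supset \operatorname{Der}_k^{p}(R) \supset \dots \supset \operatorname{Der}_k^{q}(R)$ has only finitely many steps (those with $p^j \le q$), so it can contribute at most finitely many leaps below $q$; beyond $q$, every new leap of $\operatorname{Der}_k(R,R)$ is necessarily a leap detected inside $\operatorname{Der}_k^{q}(R)$, which we have just shown to be finite. Hence $\operatorname{Leaps}(\operatorname{Der}_k(R,R))$ is finite.

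For the final sentence of the statement, finiteness of the leap set means that there is a largest index at which a leap occurs; choosing $\theta$ so that $p^{\theta}$ exceeds this largest leap gives $\operatorname{Der}_k^{p^{\theta}}(R) = \operatorname{Der}_k^{p^{\theta+1}}(R) = \dots$, which by the remark following Definition \ref{FL def} is exactly the assertion that $R$ satisfies $\FL{\theta}$.

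The main obstacle I anticipate is the bookkeeping in reconciling the two thresholds: Theorem \ref{eq finie thm} controls $\operatorname{Der}_k^{q}(R)$ with $q = p^{\tau}$, whereas the conclusion concerns $\operatorname{Der}_k(R,R)$. The subtle point is that an element $d \in \operatorname{Der}_k(R,R)$ with $d \notin \operatorname{Der}_k^{q}(R)$ already leaps somewhere in the finite range $\{p, p^2, \dots, q\}$, so such elements cannot contribute leaps above $q$; conversely Theorem \ref{eq finie thm} shows $\operatorname{Der}_k^{q}(R)$ itself produces only finitely many leaps. Making this dichotomy airtight — that $\operatorname{Leaps}(\operatorname{Der}_k(R,R)) \subseteq \{p,\dots,q\} \cup \operatorname{Leaps}(\operatorname{Der}_k^{q}(R))$ — is the one place requiring genuine care, though it is essentially formal once one notes that a leap at $m > q$ is witnessed by an element lying in $\operatorname{Der}_k^{q}(R) = \operatorname{Der}_k^{m-1}(R) \cap \dots$ by Theorem \ref{p^n leap}.
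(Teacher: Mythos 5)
Your proposal is correct and follows essentially the same route as the paper: the paper's proof is the one-line observation that $\operatorname{Leaps}(\operatorname{Der}_k(R,R))\subset\{p,p^2,\dots,q\}\cup\operatorname{Leaps}(\operatorname{Der}_k^{q}(R))$ together with Theorem \ref{eq finie thm}, whose own proof already invokes Theorem \ref{Her thm} with $M=\operatorname{Der}_k^{q}(R)$ and $\mathfrak{l}=\mathfrak{n}^{e(q+1)+1}$ exactly as you do. Your careful justification of the containment via Theorem \ref{p^n leap} is the right (and intended) argument.
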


\begin{proof}
    The set of leaps of $\operatorname{Der}_k(R,R)$ is contained in
    $$\{p,p^2,\dots,q\}\cup \operatorname{Leaps}(\operatorname{Der}_k^q(R)),$$
    which is finite by Theorem \ref{eq finie thm}.
\end{proof}

\section{The global case}
In the previous section, we proved that if a local $k$-algebra $(R,\mathfrak{n})$ essentially of finite type satisfies the finiteness of leaps except the closed point, then so does $R$ itself (Setting \ref{local setting}, Theorem \ref{sec 4 main thm}). In this section, we apply this result to prove the finiteness of leaps for arbitrary $k$-schemes essentially of finite type.

\begin{theorem}\label{main thm global}
    Let $X$ be a $k$-scheme essentially of finite type. Then $X$ satisfies $\FL{\tau}$ for a sufficiently large integer $\tau$. In particular, the set of leaps of $X$ is finite.
\end{theorem}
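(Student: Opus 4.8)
The plan is to reduce the global statement to the local result of Section~4 by Noetherian induction, organized by the dimension of the scheme. First I would reduce to the affine case. The condition $\FL{\tau}$ — that the inclusions $\operatorname{Ob}_X^{p^i}\subseteq \operatorname{Ob}_X^{p^{i+1}}$ are equalities for all $i\ge \tau$ — can be tested stalkwise on the coherent sheaves of Theorem \ref{Ob sheaf}, hence on a finite affine cover $\{U_j=\Spec R_j\}$; if each $R_j$ satisfies $\FL{\tau_j}$, then $X$ satisfies $\FL{\max_j \tau_j}$. So I may assume $X=\Spec R$ with $R$ essentially of finite type, and set $d=\dim X$, which is finite.

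Next I would obtain \emph{pointwise} finiteness by induction on $d$, using Section~4 applied to the punctured spectra. For $d=0$ the scheme is a finite union of spectra of Artinian local rings, each with empty punctured spectrum, so Theorem \ref{sec 4 main thm} applies directly. For the inductive step, fix $x\in X$: the local ring $\mathcal{O}_{X,x}$ is essentially of finite type, hence a quotient $T/I$ of a suitable formally smooth local $T$ as in Setting \ref{local setting}, and its punctured spectrum $U_x=\Spec \mathcal{O}_{X,x}\setminus\{x\}$ has dimension $\le d-1$, is essentially of finite type, and satisfies $\mathcal{O}_{U_x,y}=\mathcal{O}_{X,y}$ for every $y\in U_x$. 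By the induction hypothesis $U_x$ satisfies $\FL{\tau_x}$ for some $\tau_x$, which is exactly the hypothesis of Setting \ref{local setting}, so Theorem \ref{sec 4 main thm} yields that $\mathcal{O}_{X,x}$ satisfies $\FL{\theta_x}$. Thus every local ring of $X$ has only finitely many leaps.

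The crux is upgrading this to a \emph{uniform} bound $\tau$ valid at all points simultaneously. I would stress at this stage that pointwise finiteness is genuinely insufficient: the subsheaves $\operatorname{Ob}_X^{p^i}$ carry different $\mathcal{O}_X^{p^i}$-module structures (the remark after Theorem \ref{duality}), and even a descending chain of honest coherent submodules whose stalks stabilize at every point need not stabilize globally. To get uniformity I would extract a quantitative \emph{confinement} of the leaps. By Theorem \ref{duality} the leap locus $L_i:=\operatorname{Supp}\!\big(\operatorname{Der}_k^{p^{i-1}}(\mathcal{O}_X)/\operatorname{Der}_k^{p^{i}}(\mathcal{O}_X)\big)$ is closed, and $X$ satisfies $\FL{\tau}$ precisely when $L_i=\emptyset$ for $i>\tau$. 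Applying Theorem \ref{sec 4 main thm} at the finitely many generic points of $X$ (whose punctured spectra are empty) produces a $\tau_0$ with all $L_i$ ($i>\tau_0$) disjoint from the generic points. The essential step — a global analogue of Lemma \ref{i lem} — is to promote this to a single proper closed subset $Z\subsetneq X$ with $L_i\subseteq Z$ for \emph{all} $i>\tau_0$. On the fixed affine $\Spec R$ I would take $F\subseteq T^1_{R/k}$ to be the largest coherent submodule containing $\operatorname{Ob}_R^{q_0}$ (with $q_0=p^{\tau_0}$) whose quotient $F/\operatorname{Ob}_R^{q_0}$ is supported in dimension $<d$, so that $\operatorname{Ob}_R^{q_0}\subseteq \operatorname{Ob}_R^{p^i}\subseteq F$ for all $i\ge \tau_0$, and then invoke the Artin--Rees lemma with a \emph{single} constant for the ideal defining $Z$ — the global replacement for the finite-length, support-at-a-point input of Section~4 — to force every higher leap into $Z$.

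Finally I would recurse by Noetherian induction on the strictly decreasing leap locus. The generic points of $Z$ have all their proper generizations outside $Z$, hence in the already-controlled locus, so Theorem \ref{sec 4 main thm} again bounds their leaps and the same confinement mechanism yields a closed $Z'\subsetneq Z$ with $\dim Z'<\dim Z$ and $L_i\subseteq Z'$ once $i$ exceeds a new threshold. Since $\dim X<\infty$, this chain $X\supsetneq Z\supsetneq Z'\supsetneq\cdots$ terminates at $\emptyset$ after finitely many steps, and the largest threshold produced is the desired uniform $\tau$; the finiteness of the set of leaps is then immediate, as $\FL{\tau}$ allows leaps only at $p,\dots,p^{\tau}$. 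I expect the main obstacle to be exactly the confinement step: transporting the local, $\mathfrak{n}$-adic estimates of Section~4 (Lemmas \ref{i lem}--\ref{j lem} and Theorem \ref{eq finie thm}) into uniform estimates over the whole affine ring $R$ with a single Artin--Rees constant, and checking that the defect sheaf $F/\operatorname{Ob}_R^{q_0}$ is genuinely supported in dimension $<\dim X$, so that the dimension of the leap locus drops at each stage and the induction descends.
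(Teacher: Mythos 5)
Your overall mechanism is the same as the paper's: build a coherent ``envelope'' $F\supseteq \operatorname{Ob}_X^{p^i}$ for all $i\ge\tau_0$, note that the higher leap loci are then trapped in the closed set $\operatorname{Supp}(F/\operatorname{Ob}_X^{p^{\tau_0}})$, apply Section~4 only at the minimal points of that support (where the punctured spectrum is already controlled), and descend by Noetherian induction. The paper removes one minimal point of the support at a time and inducts on the Noetherian poset of closed supports, whereas you remove all generic points of $Z$ at once and induct on dimension; that difference is cosmetic. But two things in your execution of the key confinement step do not work as written.

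First, the step you single out as essential --- ``invoke the Artin--Rees lemma with a single constant for the ideal defining $Z$'' as ``the global replacement for the finite-length, support-at-a-point input of Section~4'' --- is both unnecessary and unworkable. It is unworkable because the finite-length input enters through Theorem~\ref{Her thm}, whose counting argument needs $\operatorname{length}_R(M/\mathfrak{l}M)<\infty$, and because the entire boundedness machinery of Section~4 (Lemmas~\ref{hom lem}--\ref{j lem}, Theorem~\ref{eq finie thm}) is $\mathfrak{n}$-adic at the closed point of a local ring; substituting the ideal of a positive-dimensional $Z$ destroys both. It is unnecessary because once $\operatorname{Ob}_R^{p^i}\subseteq F$ holds for all $i\ge\tau_0$, the inclusion $L_i\subseteq Z=\operatorname{Supp}(F/\operatorname{Ob}_R^{q_0})$ is immediate from Theorem~\ref{duality}; Section~4 is only ever invoked at the local rings of the finitely many minimal points of the current support, where the relevant ideal is the maximal ideal and finite length is automatic. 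Second, the containment $\operatorname{Ob}_R^{p^i}\subseteq F$ does not follow from your definition of $F$ as ``the largest coherent submodule containing $\operatorname{Ob}_R^{q_0}$ with quotient supported in dimension $<d$'': each $\operatorname{Ob}_R^{p^i}$ is only an $R^{p^i}$-submodule of $T^1_{R/k}$, with a module structure that varies with $i$ (exactly the difficulty flagged after Theorem~\ref{duality}), so it is not among the submodules you are maximizing over. You must either replace it by the $R^{q_0}$-submodule it generates and check, via Proposition~\ref{Ob module}, that the quotient still vanishes at the minimal primes, or define $F$ as the paper does --- as the sheaf of sections whose restriction to the already-controlled locus lies in $\operatorname{Ob}^{q_0}$ --- and prove coherence via the cartesian square with $\iota_*\operatorname{Ob}_Y^{q_0}$. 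With those two repairs your argument closes up and coincides with the paper's proof.
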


\begin{proof}
    We construct inductively a sequence of pairs $(F_1,i_1),(F_2,i_2),\dots$ such that:
    \begin{enumerate}
        \item $i_1\le i_2\le i_3\le \dots$ are nonnegative integers,
        \item $F_n$ is a sub-$\mathcal{O}_X^{p^{i_n}}$-sheaf of $T_{X/k}^1$ that is coherent via $\mathcal{O}_X\to\mathcal{O}_X^{p^{i_n}}$,
        \item $\operatorname{Ob}_X^{p}\subset\operatorname{Ob}_X^{p^2}\subset \dots \subset F_n$ for every $n$,
        \item $F_1\supset F_2\supset F_3\supset \dots$,
        \item for every $n$, $\operatorname{Supp}(F_n/\operatorname{Ob}_X^{p^{i_n}})\supsetneq \operatorname{Supp}(F_{n+1}/\operatorname{Ob}_X^{p^{i_{n+1}}})$.
    \end{enumerate}
    By (v), this sequence must terminate at some $(F_\tau,i_\tau)$ where we have $$\operatorname{Supp}(F_\tau/\operatorname{Ob}_X^{p^{i_\tau}})=\emptyset,$$ i.e., $F_\tau=\operatorname{Ob}_X^{p^{i_\tau}}$. By (iii), $X$ satisfies $\FL{\tau}$ and in particular the set of leaps of $X$ is finite.

    We set $F_1=T_{X/k}^1,\ i_1=0$. Suppose that we have constructed $(F_n, i_n)$ and we would like to construct $(F_{n+1},i_{n+1})$ satisfying (i)-(v). Let us take a generic point $\xi$ of $\operatorname{Supp}(F_n/\operatorname{Ob}_X^{p^{i_n}})$ (i.e. a point without generalizations). Let $R$ be the local ring of $X$ at $\xi$ and let $Y=\operatorname{Spec}R,\ U:=Y\setminus \{\xi\}$. Then $U$ satisfies $\FL{i_n}$ as $\operatorname{Supp}(F_n|_U/\operatorname{Ob}_U^{p^{i_n}})=\emptyset$. Thus, we can apply the results of the previous section to the local $k$-algebra $R$. We can find an integer $i_{n+1}\ge i_{n}$ such that $Y$ satisfies $\FL{i_{n+1}}$. We define $F_{n+1}$ as the subsheaf of $F_n$ consisting of sections $s\in F_n$ such that $$s|_Y\in \operatorname{Ob}_Y^{p^{i_{n+1}}}.$$
    Conditions (i) and (iv) follow from the construction. We have the following cartesian diagram of $\mathcal{O}_X^{p^{i_{n+1}}}$-sheaves
\[\begin{tikzcd}
	{F_{n+1}} & {\iota_*\operatorname{Ob}_Y^{i_{n+1}}} \\
	{F_n} & {\iota_*\iota^*F_n}
	\arrow[from=1-1, to=1-2]
	\arrow[hook, from=1-1, to=2-1]
	\arrow[hook, from=1-2, to=2-2]
	\arrow[from=2-1, to=2-2]
\end{tikzcd}\]
    where $\iota: Y\to X$. In other words, we have $F_{n+1}=F_n\times_{\iota_*\iota^*F_n}\iota_*\operatorname{Ob}_Y^{i_{n+1}}$. This implies the quasi-coherence, and hence the coherence of $F_{n+1}$ in the sense stated in (ii). Condition (iii) follows from the fact that $Y$ satisfies $\FL{i_{n+1}}$ and from the construction. The restriction of the above diagram to $Y$ is
\[\begin{tikzcd}
	{\iota^*F_{n+1}} & {\iota^*\iota_*\operatorname{Ob}_Y^{i_{n+1}}=\operatorname{Ob}_Y^{i_{n+1}}} \\
	{\iota^*F_n} & {\iota^*\iota_*\iota^*F_n}
	\arrow[from=1-1, to=1-2]
	\arrow[hook, from=1-1, to=2-1]
	\arrow[hook, from=1-2, to=2-2]
	\arrow[equals, from=2-1, to=2-2],
\end{tikzcd}\]
    which is also cartesian. This implies $F_{n+1}|_Y=\iota^*F_{n+1}=\operatorname{Ob}_Y^{i_{n+1}}$ and $\xi \notin \operatorname{Supp}(F_{n+1}/\operatorname{Ob}_X^{p^{i_{n+1}}})$. In particular, (v) follows.
\end{proof}

As an affine case of the above theorem, we obtain the following.

\begin{theorem}\label{eft alg thm}
    Any $k$-algebra $R$ essentially of finite type satisfies $\FL{\tau}$ for a sufficiently large integer $\tau$. In particular, the set of leaps of $R$ is finite.
\end{theorem}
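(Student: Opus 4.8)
The plan is to deduce this statement directly from Theorem \ref{main thm global}, applied to the affine scheme $X=\Spec R$. First I would check that $X=\Spec R$ qualifies as a $k$-scheme essentially of finite type in the sense fixed in the introduction: it is affine, hence quasi-compact, and it admits the one-element open affine covering $\{\Spec R\}$ in which $R$ is essentially of finite type. Thus Theorem \ref{main thm global} applies and produces an integer $\tau$ such that $X$ satisfies $\FL{\tau}$, that is,
$$\operatorname{Ob}_X^{p^\tau}=\operatorname{Ob}_X^{p^{\tau+1}}=\operatorname{Ob}_X^{p^{\tau+2}}=\dots$$
as subsheaves of $T_{X/k}^1$.

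Next I would translate this equality of sheaves into the desired equality of modules. By Theorem \ref{Ob sheaf}, for the affine open $\Spec R=X$ we have $\operatorname{Ob}_X^{m}(\Spec R)=\operatorname{Ob}_R^{m}\subset T_R^1$ for every $m\ge 1$. Taking global sections of the stationary chain above therefore yields
$$\operatorname{Ob}_R^{p^\tau}=\operatorname{Ob}_R^{p^{\tau+1}}=\operatorname{Ob}_R^{p^{\tau+2}}=\dots,$$
which is exactly the assertion that $R$ satisfies $\FL{\tau}$ in the sense of Definition \ref{FL def}.

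Finally, for the finiteness of leaps I would combine $\FL{\tau}$ with the constraint on where leaps can appear. By the remark following Definition \ref{FL def}, the condition $\FL{\tau}$ is equivalent to $\operatorname{Der}_k^{p^\tau}(R)=\operatorname{Der}_k^{p^{\tau+1}}(R)=\dots$, so no leap occurs at any $m>p^\tau$. On the other hand, Theorem \ref{p^n leap} shows that a leap can only occur when $m$ is a power of $p$. Hence every leap of $R$ lies in the finite set $\{p,p^2,\dots,p^\tau\}$, so that $\operatorname{Leaps}(\operatorname{Der}_k(R,R))$ is finite.

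Since all of the substantive work is carried out in Theorem \ref{main thm global}, I do not expect a genuine obstacle here. The only step requiring care is the passage from the sheaf-level statement $\FL{\tau}$ for $X$ to the module-level statement $\FL{\tau}$ for $R$, and this is handled cleanly by the compatibility recorded in Theorem \ref{Ob sheaf}; in particular one should note that the distinct $\mathcal{O}_X^{p^i}$-module structures on the terms $\operatorname{Ob}_X^{p^i}$ do not cause trouble, because the argument only passes through global sections of an already-stabilized chain.
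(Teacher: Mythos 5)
Your proposal is correct and follows exactly the paper's route: the paper obtains Theorem \ref{eft alg thm} simply as the affine case of Theorem \ref{main thm global}, and your verification that $\Spec R$ is a $k$-scheme essentially of finite type, the passage from $\operatorname{Ob}_X^{p^i}$ to $\operatorname{Ob}_R^{p^i}$ via Theorem \ref{Ob sheaf}, and the use of Theorem \ref{p^n leap} to confine leaps to $\{p,\dots,p^\tau\}$ are precisely the (unwritten) details the paper leaves implicit.
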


We now study $\infty$-integrability based on the finiteness of leaps. We begin with the following lemma.

\begin{lemma}\label{mn int lem}
    Let $R$ be a $k$-algebra essentially of finite type, let $m, \tau\ge 1$ be integers, and let $q:=p^\tau$. Suppose that $R$ satisfies $\FL{\tau}$. Let $D\in \HS_k^m(R)$. Let $l_1,l_2,\dots$ be the sequence of integers defined as in \ref{l_n def}. Then there exists $D^+\in \HS_k^{m+1}(R)$ such that $D$ and $D^+$ coincide up to $l_{m+1}-1$, i.e., $D_1=D^+_1,\dots, D_{l_{m+1}-1}= D_{l_{m+1}-1}^+$.
\end{lemma}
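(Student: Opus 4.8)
The plan is to reduce this global statement to the local situation already handled in Section 4. The hypothesis is that $R$ satisfies $\FL{\tau}$, which by Definition \ref{FL def} means $\operatorname{Ob}_R^{p^\tau}=\operatorname{Ob}_R^{p^{\tau+1}}=\dots$; equivalently the descending chain of integrable derivations stabilizes from $p^\tau$ onward. I want to extend a given $D\in \HS_k^m(R)$ one step further, but only insisting that the two agree in the low degrees up to $l_{m+1}-1$. The key observation is that $l_{m+1}$ was designed (Definition \ref{l_n def}) precisely so that when $m+1=m'p^i$, the construction produces an extension whose first $l_{m+1}-1=m'p^{i-\tau}-1$ (or $m'-1$) terms are left untouched; so the bookkeeping on which terms must be preserved is exactly parallel to what appears in Proposition \ref{a to b extension prop}.

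First I would write $m+1=m'p^i$ with $p\nmid m'$ and split into the two cases $i\ge\tau$ and $i<\tau$, mirroring the case division in Proposition \ref{a to b extension prop}. The obstruction to extending $D$ from length $m$ to length $m+1$ is $\operatorname{ob}_{m+1}(D)\in \operatorname{Ob}_R^{m+1}=\operatorname{Ob}_R^{p^i}$, where the equality uses Proposition \ref{np=p}. In the case $i\ge\tau$, since $R$ satisfies $\FL{\tau}$ we have $\operatorname{Ob}_R^{p^i}=\operatorname{Ob}_R^{q}$, so $\operatorname{ob}_{m+1}(D)$ lies in the image of $\operatorname{ob}_q$. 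Thus there exist $D^1,\dots,D^n\in \HS_k^{q-1}(R)$ and scalars $x_1,\dots,x_n\in R$ such that $\operatorname{ob}_{m+1}(D)+\sum_j \operatorname{ob}_q(x_j\bullet D^j)=0$, using the $R$-module structure of $\operatorname{Ob}_R^q$ over $R^q$ together with Proposition \ref{obstruction independence}. Then I would form $F^j:=(x_j\bullet D^j)[m'p^{i-\tau}]$ and set $D^+:=D\circ F^1\circ\dots\circ F^n$, truncated to length $m+1$. Since $F^j$ has vanishing terms in degrees below $m'p^{i-\tau}=l_{m+1}$, composition with these factors does not alter the components of $D$ in degrees $\le l_{m+1}-1$, giving the coincidence condition. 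The relation $\operatorname{ob}_{mn}(D[n])=\operatorname{ob}_m(D)$ noted after Definition \ref{obstruction sp M,Ob} shows that the obstruction of the composite vanishes, so $D^+$ genuinely has length $m+1$. The case $i<\tau$ is handled identically, using instead $\operatorname{Ob}_R^{m+1}=\operatorname{Ob}_R^{p^i}$ directly (no $\FL{\tau}$ needed, but it does no harm) and the reindexing $F^j:=(x_j\bullet D^j)[m']$, so that the preserved range is $m'-1=l_{m+1}-1$.

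The main obstacle, or rather the point requiring care, is verifying that the composite $D^+$ actually extends $D$ rather than merely being some length-$(m+1)$ HS-derivation: one must check that truncating $D^+$ to length $m$ returns $D$. This follows because each $F^j$ is of the form $(\text{something})[l_{m+1}]$ and hence has no nonzero terms in any degree that is not a multiple of $l_{m+1}$; in particular, in degrees $1,\dots,m$ its only possibly-nonzero contributions occur at multiples of $l_{m+1}$, and the group-composition formula shows these assemble to leave $D$'s truncation intact while killing the degree-$(m+1)$ obstruction. This is exactly the mechanism already used in Proposition \ref{a to b extension prop}, so the proof is a streamlined reprise of that argument in the purely algebraic (non-bounded) setting, with the boundedness bookkeeping stripped away since here we only need the combinatorial structure of the shift operator $[n]$ and the stabilization guaranteed by $\FL{\tau}$.
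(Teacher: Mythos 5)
Your construction is essentially the paper's proof: the paper takes a single $E\in\HS_k^{r-1}(R)$ with $r=(m+1)/l_{m+1}$ satisfying $\operatorname{ob}_{m+1}(D)=-\operatorname{ob}_r(E)$ (possible because $\operatorname{Ob}_R^{m+1}=\operatorname{Ob}_R^{p^i}=\operatorname{Ob}_R^{r}$ is, using $\FL{\tau}$ when $i\ge\tau$, exactly the image of the group homomorphism $\operatorname{ob}_r$, so no decomposition into terms $x_j\bullet D^j$ is needed) and then extends $D\circ E[l_{m+1}]$ to length $m+1$; your case split and your appeal to the module structure are harmless extra steps that collapse to the same thing. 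One correction to your final paragraph: it is neither true nor required that truncating $D^+$ to length $m$ returns $D$ --- the composite $D\circ F^1\circ\dots\circ F^n$ generally differs from $D$ in degrees that are nonzero multiples of $l_{m+1}$ (for instance $(D\circ F^j)_{l_{m+1}}=D_{l_{m+1}}+x_jD^j_1$), and the lemma only asks for agreement up to $l_{m+1}-1$, which your construction does deliver.
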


\begin{proof}
    Let $r:={m+1}/l_{m+1}$. As $\operatorname{Ob}_R^{p^\tau}=\operatorname{Ob}_R^{p^{\tau+1}}=\dots$, there exists $E\in \HS_k^{r-1}(R)$ such that $\operatorname{ob}_{m+1}(D)=-\operatorname{ob}_r(E)$. Then $D\circ E[l_{m+1}]$ extends to $D^+\in  \HS_k^{m+1}(R)$, whose first $l_{m+1}-1$ terms coincide with those of $D$.
\end{proof}

\begin{theorem}\label{mn int theorem}
    Let $R$ be a $k$-algebra essentially of finite type, let $n\ge 1$ be an integer, and let $D\in \HS_k^n(R)$. Then $D$ is $\infty$-integrable if and only if $D$ is $m$-integrable for every integer $m\ge n$. Here, we say that $D$ is $m$-integrable if $D$ is in the image of $\HS_k^m(R)\to \HS_k^n(R)$.
\end{theorem}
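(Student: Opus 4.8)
The forward implication is immediate: if $D$ lifts to some $\widetilde{D}\in\HS_k^\infty(R)$, then truncating $\widetilde{D}$ to length $m$ exhibits $D$ as $m$-integrable for every $m\ge n$. So the whole content lies in the converse, and the plan is to manufacture an infinite-length lift of $D$ by iterating Lemma \ref{mn int lem}, exploiting that the amount of information it preserves grows without bound.

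First I would invoke Theorem \ref{eft alg thm} to fix an integer $\tau\ge 1$ with $R$ satisfying $\FL{\tau}$, and set $q=p^\tau$; this is exactly the hypothesis required to run Lemma \ref{mn int lem}. Recall from Definition \ref{l_n def} that $l_m\ge m/q$, so $l_m\to\infty$. I would then pick a single length $m_0\ge q(n+1)$ and, using that $D$ is $m_0$-integrable, fix a lift $E^{(0)}\in\HS_k^{m_0}(R)$ whose truncation to length $n$ is $D$. It is worth noting that only this one sufficiently large $m_0$ is genuinely needed; the full hypothesis merely supplies it.

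Next I would extend step by step. Given $E^{(N)}\in\HS_k^{m_0+N}(R)$, Lemma \ref{mn int lem} produces $E^{(N+1)}\in\HS_k^{m_0+N+1}(R)$ that agrees with $E^{(N)}$ in every degree $\le l_{m_0+N+1}-1$. Since $m_0\ge q(n+1)$ forces $l_{m_0+N+1}-1\ge n$ for all $N\ge 0$, the first $n$ components are never altered, so each $E^{(N)}$ still truncates to $D$. The crucial point is that for every fixed degree $j$ the bound $l_m\ge m/q$ guarantees $l_{m_0+N+1}-1\ge j$ once $N\ge q(j+1)-m_0-1$; hence the $j$-th component $E^{(N)}_j$ is eventually constant in $N$, and I can define $E^\infty_j$ to be this stable value.

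The one genuinely delicate step — and the place to be careful — is verifying that $E^\infty=(E^\infty_j)_{j\ge 0}$ is an honest Hasse--Schmidt derivation of infinite length. This follows formally from the stabilization: each relation $E^\infty_i(xy)=\sum_{j+k=i}E^\infty_j(x)E^\infty_k(y)$ involves only the finitely many components of degree $\le i$, which coincide with those of $E^{(N)}$ for all large $N$, and each such $E^{(N)}$ is a genuine HS-derivation of length $\ge i$, so the relation holds in the limit. Thus $E^\infty\in\HS_k^\infty(R)$, and by the preceding paragraph its truncation to length $n$ is $D$, proving $D$ is $\infty$-integrable. I expect no real obstacle beyond bookkeeping the index at which each degree stabilizes and checking that the preserved low-degree terms keep $E^\infty$ a lift of $D$; the substantive work has already been absorbed into Lemma \ref{mn int lem} and the growth $l_m\to\infty$.
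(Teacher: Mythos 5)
Your proposal is correct and follows essentially the same route as the paper: fix $\tau$ with $R$ satisfying $\FL{\tau}$, start from one sufficiently large lift so that $l_{m+1}-1\ge n$ thereafter, iterate Lemma \ref{mn int lem}, and pass to the limit using that $l_m\to\infty$ forces each fixed component to stabilize. Your explicit choice $m_0\ge q(n+1)$ and the verification that the stabilized limit satisfies the Leibniz relations are just slightly more detailed renderings of the steps the paper leaves implicit.
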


\begin{proof}
    The ``only if'' direction is clear. Suppose that $D$ is $m$-integrable for every integer $m\ge n$. Take a sufficiently large integer $N\gg n$ such that $l_{m+1}-1\ge n$ for every $m\ge N$. By assumption, there exists $E\in \HS_k^N(R)$ that restricts to $D$. Inductively, we define
    $$E^0=E\in \HS_k^N(R),$$
    $$E^{m+1}=(E^m)^+ \in \HS_k^{N+m+1}(R)$$
    for $m=0,1,\dots$, where $(E^m)^+$ is defined using the previous lemma. For fixed $m'=0,1,\dots$, we have a sequence of $k$-linear maps
    $$ E^{m}_{m'}, \ E^{m+1}_{m'},\ E^{m+2}_{m'},\dots$$
    for sufficiently large $m\gg 0$. Eventually it becomes stationary because $l_m$ tends to $\infty$ as $m\to \infty$. Thus, 
    $$E^\infty:=(\lim_{m\to \infty}E^m_{m'})_{m'=0}^\infty \in \HS_k^\infty(R)$$
    is a well-defined HS-derivation. Note that $E^\infty$ restricts to $D$ by the choice of $N$. Thus, $D$ is $\infty$-integrable.
\end{proof}

Setting $n=1$ in the above theorem, we obtain the following.
\begin{theorem}\label{infty integrable thm}
     Let $R$ be a $k$-algebra essentially of finite type. Then there exists $M>1$ such that
     $$ \operatorname{Der}_k^M(R)= \operatorname{Der}_k^{M+1}(R)=\dots= \operatorname{Der}_k^\infty(R)=\operatorname{Ider}_k(R).$$
\end{theorem}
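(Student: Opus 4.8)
The goal is to deduce Theorem \ref{infty integrable thm} from Theorem \ref{mn int theorem} by specializing to $n=1$, so the real work is already contained in the preceding results; what remains is to translate the characterization of $\infty$-integrability into the stabilization statement for the descending chain $\operatorname{Der}_k^m(R)$. First I would recall that, by Theorem \ref{eft alg thm}, the algebra $R$ satisfies $\FL{\tau}$ for some sufficiently large integer $\tau$, equivalently
$$\operatorname{Der}_k^{p^\tau}(R)=\operatorname{Der}_k^{p^{\tau+1}}(R)=\operatorname{Der}_k^{p^{\tau+2}}(R)=\dots,$$
using the remark after Definition \ref{FL def}. Combined with Theorem \ref{p^n leap}, which forces all leaps to occur at powers of $p$, this shows that the chain $\operatorname{Der}_k(R)\supset\operatorname{Der}_k^2(R)\supset\dots$ is already stationary from the index $M:=p^\tau$ onward, so that $\operatorname{Der}_k^M(R)=\operatorname{Der}_k^{M+1}(R)=\dots$ for every finite index above $M$. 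Thus the only genuinely new content is the identification of this common finite-level module with $\operatorname{Der}_k^\infty(R)=\operatorname{Ider}_k(R)$.

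For that identification I would argue as follows. The inclusion $\operatorname{Ider}_k(R)\subset\operatorname{Der}_k^M(R)$ is immediate from the definitions, since an $\infty$-integrable derivation is in particular $M$-integrable. For the reverse inclusion, take any $d\in\operatorname{Der}_k^M(R)$. By the stabilization just established, $d$ lies in $\operatorname{Der}_k^m(R)$ for every finite $m\ge M$, hence for every $m\ge 1$; viewing $d$ as an element of $\operatorname{HS}_k^1(R)=\operatorname{Der}_k(R,R)$, this says precisely that $d$ (as a length-one HS-derivation) is $m$-integrable for every integer $m\ge 1$. Applying Theorem \ref{mn int theorem} with $n=1$ then yields that $d$ is $\infty$-integrable, i.e.\ $d\in\operatorname{Ider}_k(R)$. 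This gives $\operatorname{Der}_k^M(R)\subset\operatorname{Ider}_k(R)$ and hence equality, completing the chain
$$\operatorname{Der}_k^M(R)=\operatorname{Der}_k^{M+1}(R)=\dots=\operatorname{Der}_k^\infty(R)=\operatorname{Ider}_k(R).$$

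The main obstacle — and the reason the theorem is nontrivial — is that stabilization of the finite-level modules does not by itself force equality with the $m=\infty$ module, because passing to $\infty$-integrability requires lifting a length-one HS-derivation to HS-derivations of every finite length \emph{simultaneously compatibly}, and then taking a limit. That difficulty is exactly what Theorem \ref{mn int theorem} resolves, via the inductive extension in Lemma \ref{mn int lem} together with the fact that the relevant terms stabilize because $l_m\to\infty$. Consequently, in my plan the single load-bearing step is the invocation of Theorem \ref{mn int theorem}, and I would make sure the hypothesis ``$m$-integrable for every $m\ge n$'' is verified with $n=1$ before applying it; everything else is a bookkeeping consequence of Theorems \ref{p^n leap} and \ref{eft alg thm}. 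A minor point to state carefully is that $M$ may be taken to be $p^\tau>1$, matching the $M>1$ in the statement.
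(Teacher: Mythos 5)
Your proposal is correct and takes essentially the same route as the paper, which simply deduces the theorem by setting $n=1$ in Theorem \ref{mn int theorem}; you have merely made explicit the (implicitly needed) appeal to Theorem \ref{eft alg thm} and Theorem \ref{p^n leap} for the stabilization of the finite-level chain at $M=p^\tau$, and the identification of that stable module with $\operatorname{Ider}_k(R)$ via the $n=1$ case. The added detail is accurate and matches the paper's intent.
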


\begin{corollary}\label{localization ider cor}
 Let $R$ be a $k$-algebra essentially of finite type. Then $\operatorname{Der}_k^\infty(R)=\operatorname{Ider}_k(R)$ is compatible with localization of $R$. That is, we have $$\operatorname{Ider}_k(S^{-1}R)=S^{-1}\operatorname{Ider}_k(R)$$
as $S^{-1}R$-modules for any multiplicatively closed subset $S\subset R$.
\end{corollary}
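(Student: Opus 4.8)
The plan is to prove Corollary \ref{localization ider cor} by combining the already-established finiteness result (Theorem \ref{infty integrable thm}) with the known localization compatibility for finite-length integrable derivations (Theorem \ref{localization}). The key observation is that Theorem \ref{infty integrable thm} collapses the infinite descending chain into a \emph{finite} stage: there exists $M$ with $\operatorname{Der}_k^M(R)=\operatorname{Ider}_k(R)$, and this reduces the subtle $m=\infty$ case to the settled integer case.

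First I would invoke Theorem \ref{infty integrable thm} to obtain an integer $M>1$ such that $\operatorname{Ider}_k(R)=\operatorname{Der}_k^M(R)$. The point is that $M$ depends on $R$, but the finiteness is all we need. Next, I would apply Theorem \ref{localization} with $m=M$ to the multiplicatively closed set $S\subset R$, yielding a canonical isomorphism
$$\alpha\colon S^{-1}\operatorname{Der}_k^M(R)\;\xrightarrow{\ \sim\ }\;\operatorname{Der}_k^M(S^{-1}R).$$
Now I would separately apply Theorem \ref{infty integrable thm} to the ring $S^{-1}R$, which is again essentially of finite type over $k$, obtaining an integer $M'>1$ with $\operatorname{Ider}_k(S^{-1}R)=\operatorname{Der}_k^{M'}(S^{-1}R)$.

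The main step is to reconcile the two thresholds $M$ and $M'$. Since the chain $\operatorname{Der}_k^M(S^{-1}R)\supset\operatorname{Der}_k^{M+1}(S^{-1}R)\supset\cdots$ is descending and stabilizes at $\operatorname{Ider}_k(S^{-1}R)$ from some point on, I would replace both $M$ and $M'$ by their maximum $M'':=\max\{M,M'\}$. By Theorem \ref{infty integrable thm} applied to each ring, we still have $\operatorname{Der}_k^{M''}(R)=\operatorname{Ider}_k(R)$ and $\operatorname{Der}_k^{M''}(S^{-1}R)=\operatorname{Ider}_k(S^{-1}R)$, because once the chain is stationary, enlarging the index does not change the module. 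Then Theorem \ref{localization} with $m=M''$ gives
$$S^{-1}\operatorname{Ider}_k(R)=S^{-1}\operatorname{Der}_k^{M''}(R)\;\xrightarrow[\ \alpha\ ]{\sim}\;\operatorname{Der}_k^{M''}(S^{-1}R)=\operatorname{Ider}_k(S^{-1}R),$$
which is precisely the asserted identification as $S^{-1}R$-modules.

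I do not expect any serious obstacle here: the entire content has been front-loaded into Theorem \ref{infty integrable thm}, whose role is exactly to license choosing a single finite $m=M''$ that simultaneously computes $\operatorname{Ider}_k$ for both $R$ and $S^{-1}R$. The only point requiring a moment of care is verifying that the canonical isomorphism $\alpha$ of Theorem \ref{localization} is compatible across the stabilized range, so that the identification is canonical and $S^{-1}R$-linear rather than merely an abstract isomorphism; this follows because the truncation maps $\operatorname{HS}_k^{M''}(R)\to\operatorname{HS}_k^{M'''}(R)$ for $M'''\le M''$ are compatible with localization, so $\alpha$ does not depend on the chosen threshold once the chain is stationary. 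This is also the precise sense in which the remark in the introduction applies: the failure of the relevant map to be surjective for $m=\infty$ is circumvented by never working directly at $m=\infty$, but only at the finite level $M''$ where the modules already agree with $\operatorname{Ider}_k$.
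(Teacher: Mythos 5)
Your proposal is correct and is essentially the paper's own argument: the paper's one-line proof (``a consequence of Theorem \ref{localization}'') implicitly relies on Theorem \ref{infty integrable thm} to replace $\operatorname{Ider}_k$ by $\operatorname{Der}_k^{M}$ at a single sufficiently large finite level for both $R$ and $S^{-1}R$, exactly as you spell out with $M''=\max\{M,M'\}$. Your additional care about the stabilization of both chains and the canonicity of $\alpha$ across the stationary range is a faithful (and more explicit) rendering of the same reasoning.
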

\begin{proof}
    This is a consequence of Theorem \ref{localization}.
\end{proof}

Thus, $\operatorname{Ider}_k(R)$ constitutes a coherent sheaf $\operatorname{Ider}_k(\mathcal{O}_X)$ on a $k$-scheme $X$ essentially of finite type. Using this sheaf, we can state Theorem \ref{infty integrable thm} in terms of schemes.

\begin{theorem}\label{infty integrable thm schemes}
    Let $X$ be a $k$-scheme essentially of finite type. Then there exists $M>0$ such that
$$ \operatorname{Der}_k^M(\mathcal{O}_X)= \operatorname{Der}_k^{M+1}(\mathcal{O}_X)=\dots=\operatorname{Ider}_k(\mathcal{O}_X).$$
\end{theorem}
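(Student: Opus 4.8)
The plan is to reduce the scheme-level statement (Theorem \ref{infty integrable thm schemes}) to the affine statement (Theorem \ref{infty integrable thm}) by exploiting the coherence and localization properties already established. The key inputs are: (a) Theorem \ref{main thm global}, which gives an integer $\tau$ such that $X$ satisfies $\FL{\tau}$, meaning $\operatorname{Der}_k^{p^\tau}(\mathcal{O}_X)=\operatorname{Der}_k^{p^{\tau+1}}(\mathcal{O}_X)=\dots$ as sheaves; and (b) Corollary \ref{localization ider cor}, which shows $\operatorname{Ider}_k$ commutes with localization so that $\operatorname{Ider}_k(\mathcal{O}_X)$ is a genuine (coherent) sheaf whose sections over an affine $\operatorname{Spec}R$ are $\operatorname{Ider}_k(R)$.

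First I would set $M=p^\tau$ where $\tau$ is the integer produced by Theorem \ref{main thm global}. The definition of $\FL{\tau}$ (Definition \ref{FL def}), together with the remark that it is equivalent to the stabilization $\operatorname{Der}_k^{p^\tau}(\mathcal{O}_X)=\operatorname{Der}_k^{p^{\tau+1}}(\mathcal{O}_X)=\dots$, immediately gives the equality of the descending chain of $m$-integrable sheaves for all $m\ge M$ among the $p$-power indices. By Theorem \ref{p^n leap} (applied locally on each affine chart), leaps can only occur at powers of $p$, so in fact the full chain $\operatorname{Der}_k^M(\mathcal{O}_X)=\operatorname{Der}_k^{M+1}(\mathcal{O}_X)=\dots$ stabilizes for every integer index $m\ge M$, not merely the $p$-power ones. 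This handles the finite-length portion of the claimed chain.

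The remaining point is to identify the common stable value with $\operatorname{Ider}_k(\mathcal{O}_X)$. I would argue this on an affine open cover $\{U_i=\operatorname{Spec}R_i\}$. On each chart, Theorem \ref{infty integrable thm} gives an $M_i$ with $\operatorname{Der}_k^{M_i}(R_i)=\dots=\operatorname{Ider}_k(R_i)$; taking $M$ large enough (and a power of $p$) to dominate all finitely many $M_i$ ensures $\operatorname{Der}_k^M(\mathcal{O}_X)(U_i)=\operatorname{Ider}_k(R_i)=\operatorname{Ider}_k(\mathcal{O}_X)(U_i)$ for each $i$. Since both $\operatorname{Der}_k^M(\mathcal{O}_X)$ and $\operatorname{Ider}_k(\mathcal{O}_X)$ are sheaves and the natural inclusion $\operatorname{Ider}_k(\mathcal{O}_X)\subset\operatorname{Der}_k^M(\mathcal{O}_X)$ becomes an equality on each member of an open cover, it is an equality of sheaves on $X$.

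The main subtlety to watch is compatibility of the two sheaf structures: one must confirm that the stabilization of the $\operatorname{Der}_k^m(\mathcal{O}_X)$ on sections over affine opens (which follows from Theorem \ref{localization} for finite $m$) glues to the sheaf-theoretic stabilization, and that the infinite-length object $\operatorname{Ider}_k(\mathcal{O}_X)$ is well-defined precisely because of Corollary \ref{localization ider cor}; the author flags in the introduction that surjectivity of the comparison map fails for $m=\infty$, so the coherence of $\operatorname{Ider}_k$ cannot be taken for granted and is exactly what makes the gluing legitimate. Once the localization compatibility is in hand, the argument is a routine quasi-compactness reduction: finiteness of the cover lets one choose a single uniform $M$, and the sheaf axiom upgrades the chart-wise equalities to a global one.
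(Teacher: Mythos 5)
Your proposal is correct and follows the same route the paper intends: reduce to the affine statement (Theorem \ref{infty integrable thm}) over a finite affine cover, use quasi-compactness to pick a single $M$ dominating the chart-wise bounds $M_i$, and invoke Corollary \ref{localization ider cor} to make $\operatorname{Ider}_k(\mathcal{O}_X)$ a well-defined sheaf so that the chart-wise equalities glue. The opening reduction via $\FL{\tau}$ and Theorem \ref{p^n leap} is harmless but redundant, since the uniform bound over the finitely many charts already yields the full stabilized chain.
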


Finally, we conclude the paper by noting that the above results extend to the case of perfect fields.

\begin{theorem}
    Let $k'$ be a perfect field of positive characteristic and let $R$ be a $k'$-algebra of finite type. Then the set of leaps of $R$ as a $k'$-algebra is finite.
\end{theorem}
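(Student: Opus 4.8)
The plan is to descend to the algebraically closed case treated throughout the paper by base-changing to the algebraic closure. Write $k:=\overline{k'}$ for the algebraic closure of $k'$; since $k'$ is perfect, the extension $k/k'$ is separable, as every algebraic extension of a perfect field is separable. Set $R_k:=R\otimes_{k'}k$. As $R$ is generated over $k'$ by finitely many elements, $R_k$ is generated over $k$ by their images, so $R_k$ is of finite type over $k$. Theorem \ref{eft alg thm} then applies and shows that the set of leaps $\operatorname{Leaps}(\operatorname{Der}_k(R_k,R_k))$ of $R_k$ as a $k$-algebra is finite.

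Next I would transport this finiteness back to $R$ over $k'$ using the stability of leaps under separable base change recorded in \cite[Theorem 3.27]{hernandez2020behavior}. Applied to the separable extension $k/k'$, this should give the equality $\operatorname{Leaps}(\operatorname{Der}_{k'}(R,R)) = \operatorname{Leaps}(\operatorname{Der}_k(R_k,R_k))$. The underlying mechanism is that the modules of $m$-integrable derivations are compatible with separable base change, so that $\operatorname{Der}_{k'}^m(R)\otimes_{k'}k \simeq \operatorname{Der}_k^m(R_k)$ for every $m$; since $k$ is faithfully flat over $k'$, an inclusion $\operatorname{Der}_{k'}^m(R)\subset \operatorname{Der}_{k'}^{m-1}(R)$ is proper exactly when it stays proper after $-\otimes_{k'}k$, which pins down the two leap sets as equal. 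Combining with the previous paragraph, $\operatorname{Leaps}(\operatorname{Der}_{k'}(R,R))$ is finite, as claimed.

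The main point to verify is that the separable base change result genuinely applies to $k/k'$, which is in general an infinite extension, whereas such statements are most cleanly available for finite separable (i.e.\ \'etale) extensions. I would resolve this by writing $k=\varinjlim k_\lambda$ as the filtered union of its finite separable subextensions $k_\lambda/k'$: each $R\otimes_{k'}k_\lambda$ is of finite type over $k_\lambda$, the finite case of the base change theorem applies at every stage, and both $\Omega_{R/k'}$ and the modules $\operatorname{Der}^m$ commute with the filtered colimit, so the isomorphism $\operatorname{Der}_{k'}^m(R)\otimes_{k'}k\simeq \operatorname{Der}_k^m(R_k)$ follows by passing to the limit. I would deliberately avoid the tempting shortcut of descending a single leap to one finite subextension $k_\lambda$ and stopping there, since $k_\lambda$ need not be algebraically closed and Theorem \ref{eft alg thm} is only available over $k$; the colimit argument, feeding back into the algebraically closed base change, is what actually closes the loop.
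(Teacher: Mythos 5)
Your proposal is correct and follows the same route as the paper: base-change along the separable extension $k'\hookrightarrow\overline{k'}$ (separable because $k'$ is perfect), apply Theorem \ref{eft alg thm} over the algebraic closure, and transfer the finiteness of leaps back via \cite[Theorem 3.27]{hernandez2020behavior}. Your additional filtered-colimit discussion is a careful way to justify applying the base-change result to an infinite extension, but it is not a different argument --- the cited theorem is stated for separable base changes in the generality the paper needs, so the core of both proofs is identical.
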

\begin{proof}
    This follows from Theorem \ref{eft alg thm} and the fact that leaps are stable under separable base changes \cite[Theorem 3.27]{hernandez2020behavior}.
\end{proof}

\section*{Acknowledgements} The author would like to thank Professor Keiji Oguiso and the members of Oguiso's laboratory for their interest in this work and their valuable comments and suggestions.

\printbibliography

\end{document}